\numberwithin{equation}{section}
\newtheorem{theorem}{Theorem}[section]
\newtheorem{corollary}[theorem]{Corollary}
\newtheorem{lemma}[theorem]{Lemma}
\newtheorem{remark}[theorem]{Remark}
\newtheorem{definition}[theorem]{Definition}
\numberwithin{figure}{section}
\title[Discrete $q$-Freud $\mathrm{II}$ orthogonal polynomials]{Asymptotics of Discrete $q$-Freud $\mathrm{II}$ orthogonal polynomials from the $q$-Riemann Hilbert Problem}
\author{Nalini Joshi$^{1}$}
\email{nalini.joshi@sydney.edu.au}
\address{$^{1}$School of Mathematics and Statistics F07, University of Sydney, Sydney NSW 2006, Australia, ORCID ID: 0000-0001-7504-4444}
\author{Tomas Lasic Latimer$^{2}$}
\email{tlas5434@uni.sydney.edu.au}
\address{$^{2}$School of Mathematics and Statistics F07, University of Sydney, Sydney NSW 2006, Australia, ORCID ID: 0000-0001-6859-7788}
\keywords{Orthogonal polynomials, Riemann Hilbert Problem, $q$-difference calculus}
\date{}
\begin{document}
\begin{abstract}
We investigate a Riemann-Hilbert problem (RHP), whose solution corresponds to a group of $q$-orthogonal polynomials studied earlier by Ismail \textit{et al}. Using RHP theory we determine new asymptotic results in the limit as the degree of the polynomials approach infinity. The RHP formulation also enables us to obtain further properties. In particular, we consider how the class of polynomials and their asymptotic behaviours change under translations of the $q$-discrete lattice and determine the asymptotics of related $q$-Painlev\'e equations.
\end{abstract}

\keywords{Riemann-Hilbert Problem, $q$-orthogonal polynomials and $q$-difference calculus. MSC classification: 33C45, 35Q15, 39A13. }

\maketitle
\tableofcontents

\section{Introduction}
Orthogonal polynomials are a key component of a wide array of mathematical problems. They provide the basis of solutions of Sturm-Liouville problems \cite{GOMEZULLATE2009352}, their zeros are related to the eigenvalue distribution of random matrices \cite{assche2018matrices}, they describe transition probabilities in birth-death models \cite{sasaki2009exactly} and are used in numerical spectral approximation methods, just to name a few examples. Their importance in describing physical phenomena was recognised over two hundred years ago (Legendre, Laplace) and they continue to be pivotal in describing mathematical and physical problems. 

In this paper we study a class of $q$-orthogonal polynomials and deduce new results concerning their asymptotic behaviour as the degree tends to infinity. The orthogonality measure of these polynomials is supported on the discrete lattice, $q^k$, for $k\in\mathbb{Z}$, where $0<q<1$. We also determine some properties of $q$-Painlev\'e equations associated with these $q$-orthogonal polynomials.

\subsection{Notation}\label{notation section}
For completeness, we recall some well known definitions and notations from the calculus of $q$-differences. These definitions can be found in \cite{Ernst2012}. Throughout the paper we will assume $q \in \mathbb{R}$ and $0<q<1$.
\begin{definition}\label{q stuff defined}
We define the Pochhammer symbol, $q$-derivative and Jackson integral as follows:
\begin{enumerate}
\item The Pochhammer symbol $(x;q)_{\infty}$ is
\begin{equation*}
    (x;q)_{\infty} = \prod_{j=0}^{\infty}(1-xq^{j}) \,.
\end{equation*}
We denote product of Pochhammer symbols in multiple variables by $(x_1,x_2;q)_{\infty}$,
\begin{equation*}
    (x_1,x_2;q)_{\infty} = \prod_{j=0}^{\infty}(1-x_1q^{j})(1-x_2q^{j}) \,.
\end{equation*}
\item The $q$-derivative is defined by
\begin{equation}\label{q_derivative}
    D_qf(x) = \frac{f(qx)-f(x)}{x(q-1)}.
\end{equation}
Note that 
\[ D_q x^n = [n]_qx^{n-1} ,\]
where 
\[ [n]_q = \frac{q^n-1}{q-1} .\]

\item The Jackson integral from $q^{j}$ to $q^{i}$ for some integers $i<j$ is given by
\[ \int_{q^j}^{q^i}f(x)d_qx = \sum_{k=i}^{j}f(q^k)q^k .\]
The Jackson integral from $-q^{j}$ to $q^{i}$ for some integers $i,j$ is given by
\[ \int_{-q^j}^{q^i}f(x)d_qx = \sum_{k=j}^{\infty}f(-q^k)q^k + \sum_{k=i}^{\infty}f(q^k)q^k.\]

\end{enumerate}
\end{definition}

\begin{definition}
In this paper $\mathbb{N}$ will denote the set of natural numbers including zero (i.e. 0,1,2,3,..), unless otherwise stated.
\end{definition}

We recall the definition of an {\em appropriate} Jordan curve and {\em admissible} weight function given in \cite[Definition 1.2]{qRHP} (with slight modification).
\begin{definition}\label{admissable}
A positively oriented Jordan curve $\Gamma$ in $\mathbb C$ with interior $\mathcal D_-\subset\mathbb C$ and exterior $\mathcal D_+\subset\mathbb C$ is called {\em appropriate} if 
\[
\pm q^k\in \begin{cases}
&\mathcal D_- \quad {\mathrm if}\, k\ge 0,\; (k\in \mathbb{Z}),\\
&\mathcal D_+ \quad {\mathrm if}\, k< 0,\; (k\in \mathbb{Z}),
\end{cases}
\]
and,
\[ e^{\frac{i(\pi + 2n\pi)}{4}}q^{-k} \in \mathcal{D}_+,\; (n\in 0,1,2,3\; \mathrm{and}\, k\in \mathbb{N}).\]
\end{definition}

\begin{definition}[\cite{qRHP}]\label{formal h def}
Define $h_q:\mathbb{C}\setminus (\{0\}\cup \{\pm q^k\}_{k=-\infty}^\infty) \to \mathbb{C}$ by
\begin{equation}\label{h new def}
h_q(z) = \sum_{k=-\infty}^{\infty} \frac{2zq^{k}}{z^{2}-q^{2k}}  = \sum_{k=-\infty}^{\infty} \left( \frac{q^{k}}{z-q^{k}} + \frac{q^{k}}{z+q^{k}} \right),
\end{equation}
Note that $h_q(z)$ satisfies the $q$-difference equation 
\begin{equation}\label{h alpha diff}
    h_q(qz) = h_q(z).
\end{equation}
\end{definition}
In Appendix \ref{Properties of hq}, we show that $h_q(z)$ has certain unique properties.

\subsection{Background}
Let $\{P_n(x)\}_{n=0}^\infty$ be a class of monic polynomials which satisfy the orthogonality relation
\begin{equation}\label{original orthogonal}
    \int_{\mathbb{R}} P_n(x)P_m(x)w(x)dx = \gamma_n\delta_{n,m},
\end{equation}
for some weight function $w(x)$. Equation \eqref{original orthogonal} gives rise to the three term recurrence relation 
\begin{equation}\label{general recurrence coefficients}
    xP_n(x) = P_{n+1}(x) + \beta_nP_n(x) + \alpha_nP_{n-1}(x),
\end{equation}
where the recurrence coefficients are given by
\begin{equation}\label{intro recurrence eqn}
    \alpha_n = \frac{\gamma_n}{\gamma_{n-1}}\,,\,\beta_n = \frac{\int_\mathbb{R} xP_n(x)^2dx}{\gamma_n}.
\end{equation}
For even weight functions, $w(-x) = w(x)$, we find $\beta_n = 0$ for all $n$. 

Following the pioneering work of Freud and others, questions arising about the asymptotic locations of the zeros of $P_n(x)$ and behaviour of $\alpha_n$ as $n\to\infty$ have led to many developments in orthogonal polynomials and approximation theory. Motivated by the work of Deift \textit{et al.} \cite{Deift1999strong} we use the setting of the Riemann Hilbert Problem (RHP) to answer such questions (in Theorems \ref{main result 1}, \ref{main result 2} and \ref{main result 3}) for a class of $q$-orthogonal polynomials. In particular, we study polynomials which satisfy the orthogonality condition
\[ \int_{-\infty}^\infty P_n(x)P_m(x)(-x^4;q^4)_\infty d_qx = \gamma_n\delta_{n,m} .\]
Throughout this paper we will label these polynomials as $q$-Freud II polynomials to be consistent with the nomenclature of the DLMF \cite[Chapter 18]{NIST:DLMF}. $q$-Freud II polynomials were studied earlier by Ismail \textit{et al.}  \cite{ISMAIL2010518}, however Ismail \textit{et al.} considered orthogonality on a continuous measure over $\mathbb{R}$. We will show in Section \ref{Non-unique measure section} that this continuous class of polynomials can readily be extended to those with a discrete measure, which will be the focus of this paper. 

A significant development in the theory of orthogonal polynomials is the observation that the recurrence coefficients $\alpha_n$ often give rise to discrete Painlev\'e equations \cite{fokas1991discrete}. For example, the recurrence coefficients of orthogonal polynomials with weight function $e^{-x^4/4}dx$ satisfy the discrete equation \cite{freud1976coefficients}
\[ \alpha_n(\alpha_{n+1}+\alpha_{n}+\alpha_{n-1}) = n, \]
which is a case of the first discrete Painlev\'e equation, or $\mathrm{dP_{I}}$ \cite{joshi2019discrete}. 

The recurrence coefficients of $q$-Freud II polynomials also satisfy a discrete Painlev\'e equation, where the non-autonomous term in the equation is now iterated on a multiplicative lattice. (For the terminology distinguishing types of discrete Painlev\'e equations, we refer to Sakai \cite{s:01}.) As detailed by Ismail \textit{et al.} \cite{ISMAIL2010518} the recurrence coefficients of $q$-Freud II polynomials satisfy
\begin{equation} \label{qPainleve 4}
    \alpha_{n}(\alpha_{n+1}+q^{n-1}\alpha_{n}+q^{-2}\alpha_{n-1}-q^{2n-3}\alpha_{n+1}\alpha_{n}\alpha_{n-1}) = (q^{-n}-1)q^{1-n}.
\end{equation}
In Sections \ref{Non-unique measure section} and \ref{recurrence coefficients section} we extend this result initially determined by Ismail \textit{et al.} to a larger class of $q$-Freud II polynomials, using techniques similar to those found in \cite{Boelen}. 

Through the connection to RHP theory developed in this paper we obtain new insights into the solutions of Equation \eqref{qPainleve 4}. For example, Theorem \ref{main result 3} shows that there exists more than one real positive solution of Equation \eqref{qPainleve 4}, and we notice that the asymptotic behaviour as $n\to \infty$ varies between solutions. 

There is an important feature of $q$-difference equations that affects our discussion of $q$-Freud II polynomials. The associated weight function satisfies a $q$-difference equation, which gives rise to a family of weights involving a free $q$-periodic function $C(x)$, where $C(qx)=C(x)$. If a weight in this family with $C\not\equiv 1$ were to be chosen, the resulting family of orthogonal polynomials may have properties that differ from the class we consider. We expand on this point below.

\subsection{Defining $q$-Freud II polynomials}\label{Non-unique measure section}
We define the family of weight functions $u_m: \mathbb{C}\setminus \{e^{\frac{i\pi(1+2n)}{2m}}q^{-k}\}_{k=0}^\infty \to \mathbb{C}$, where $n=0,1,...,2m-1$, as
\begin{equation}\nonumber
    u_m(x) = \frac{1}{(-x^{2m},q^{2m})_\infty}, 
\end{equation} 
where $m \in \mathbb{N}$ \cite{ISMAIL2010518}. They satisfy the $q$-difference equation
\begin{equation}\label{general qdiff}
    D_q u_m(x) = \frac{-x^{2m-1}}{1-q}u_m(x) .
\end{equation} 
This is analogous to classical Freudian weights $v_m(x) = e^{-x^{2m}}$, which satisfy the differential equation
\begin{equation}\label{general diff}
    \frac{d}{dx} v_m(x) = -2mx^{2m-1}v_m(x) .
\end{equation} 
However, a key difference between these two relations is that Equation \eqref{general qdiff} is a discrete relation. In particular if $u_m(x)$ satisfies Equation \eqref{general qdiff} then $u_m(x)C(x)$ also does, for any function $C(x)$ satisfying $C(qx) = C(x)$. Consider the case
\begin{equation}\label{hermite weight}
    u_2(x) = \frac{1}{(-x^2,q^2)_\infty}. 
\end{equation} 
This weight gives rise to discrete $q$-Hermite $\mathrm{II}$ polynomials \cite[Chapter 18.27]{NIST:DLMF}. The sequence of discrete $q$-Hermite $\mathrm{II}$ polynomials, $\{H_n(x)\}_{n=0}^\infty$, are orthogonal with respect to any measure $u_2(x)C(x)$. In particular they satisfy the continuous orthogonality condition
\[ \int_{-\infty}^{\infty} H_n(x)H_m(x)u_2(x)dx = \gamma_n \delta_{n,m}, \]
on the real line, and also satisfy the discrete orthogonality condition
\[ \int_{-\infty}^{\infty} H_n(cx)H_m(cx)u_2(cx)d_qx = \gamma^{(c)}_{n} \delta_{n,m}, \]
for any constant $c$. In contrast, as we will show in Section \ref{recurrence coefficients section} this is not true for the weight,
\begin{equation}\label{w def}
    w(x) = u_4(x) = \frac{1}{(-x^4,q^4)_\infty},
\end{equation} 
which is the focus of this paper. Thus, when describing $q$-Freud $\mathrm{II}$ orthogonal polynomials, one also has to specify their orthogonality weight. 

For the remainder of this paper, we focus on the sequence of polynomials $\{P_n(x)\}$, $0\le n\in\mathbb N$, that satisfy
\[ \int_{-\infty}^{\infty} P_n(x)P_m(x)w(x)d_qx = \gamma_{n} \delta_{n,m}. \]
We will call these polynomials $qF_{II}$ polynomials. In Section \ref{recurrence coefficients section}, we discuss the implications of our results to polynomials orthogonal with respect to the weights of the form 
\begin{equation}\label{intro general c ortho}
    \int_{-\infty}^{\infty} P_n(cx)P_m(cx)w(cx)d_qx = \gamma^{(c)}_{n} \delta_{n,m}, 
\end{equation} 
for any constant $q<c\leq 1$. We will call these polynomials $qF_{II}^{(c)}$ polynomials.
\subsection{Main results}
We are now in a position to state the main results of this paper, which are listed as Theorems \ref{main result 1}, \ref{main result 2} and \ref{main result 3} below. The first main result concerns the asymptotic behaviour of orthogonal polynomials as their degree approaches infinity.
\begin{theorem}\label{main result 1}
Suppose that $\{P_n(z)\}_{n=0}^\infty$ is a family of monic polynomials, orthogonal with respect to the weight $w(z)d_qz$. Define $t = zq^{n/2}$. Then, as $n\to \infty$, for even $n\in \mathbb{N}$:
\[
    P_n(z) = \left\{\begin{array}{lr}
     (-1)^{n/2} q^{\frac{-n}{2}(\frac{n}{2}-1)} a(z)\left(\frac{\mu_2}{\eta_2}+ O(q^{n/4}) \right) & \text{for } |z|\leq q^{-n/4}, \\
       z^{n}a_{\infty}(t)\left(1 + O(q^{n/4})\right) & \text{for } |z|> q^{-n/4},
        \end{array}\right.
\]
where $a(z)$ is a solution of Equation \eqref{near 0 q diff}, defined in Lemma \ref{L01}, and, $a_{\infty}(t)$ is a solution of Equation \eqref{near infty q diff}, defined in Lemma \ref{Linfty1} (they are both independent of $n$). Similarly $P_{n-1}(z)$ has the asymptotic behaviour
\[
    P_{n-1}(z) = \left\{\begin{array}{lr}
     (-1)^{n/2} q^{\frac{n}{2}(\frac{n}{2}-1)}\gamma_{n-1}b(z)\left(\frac{\lambda_3}{c_{\Psi}}+ O(q^{n/4}) \right) & \text{for } |z|\leq q^{-n/4}, \\
       \frac{q^{n(\frac{n}{2}-1)}\gamma_{n-1}}{\mu_4\mathcal{H}c_\varPsi}z^{n}b_{\infty}(t)\left(1+ O(q^{n/4}) \right) & \text{for } |z|> q^{-n/4},
        \end{array}\right.
\]
where $b(z)$ is also a solution of Equation \eqref{near 0 q diff}, defined in Lemma \ref{L01}, and, $b_{\infty}(t)$ is a solution of Equation \eqref{near infty q diff}, defined in Lemma \ref{Linfty1}. 
\end{theorem}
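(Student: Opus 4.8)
The plan is to deploy the standard Deift--Zhou steepest-descent machinery, adapted to the $q$-setting of the $q$-Riemann--Hilbert problem. The starting point is the $2\times 2$ matrix RHP $Y(z)$ whose $(1,1)$ entry is $P_n(z)$ and whose $(2,1)$ entry is (a constant multiple of) $P_{n-1}(z)$, with jump across the appropriate Jordan curve $\Gamma$ governed by the weight $w(z)$ and the function $h_q(z)$ built from $w$; this is the formulation recalled in the excerpt. The first step is to normalise at infinity and rescale the spectral variable, introducing $t=zq^{n/2}$, which is the natural scaling because the discrete lattice $\{q^k\}$ together with the factor $(-z^4;q^4)_\infty$ forces the transition region to sit near $|z|\sim q^{-n/4}$; equivalently $|t|\sim q^{n/4}\to 0$ on one side and $|t|$ bounded away from $0$ on the other. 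I would then perform a sequence of explicit transformations $Y\mapsto T\mapsto S\mapsto R$: first a ``$g$-function'' type conjugation $Y=C^nTC^{-n}$ (with $C$ the diagonal matrix $\mathrm{diag}(q^{-n/2},q^{n/2})$ or similar) absorbing the $z^n$ growth and the $q$-powers $q^{\pm n(n/2-1)/2}$ that appear in the prefactors of the theorem, and then open lens/deform $\Gamma$ so that the jump matrices become exponentially (in the $q$-sense, i.e.\ by powers of $q^{n/4}$) close to the identity away from two ``local'' regions: a neighbourhood of $z=0$ and a neighbourhood of $z=\infty$ (equivalently $t$ bounded).

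The second, central step is the construction of the two local parametrices. Near $z=0$ the outer behaviour must match a model RHP whose solution is expressed through $a(z)$ and $b(z)$, the solutions of the $q$-difference equation \eqref{near 0 q diff} furnished by Lemma \ref{L01}; near $z=\infty$ (in the scaled variable $t$) it must match the model built from $a_\infty(t)$, $b_\infty(t)$, the solutions of \eqref{near infty q diff} from Lemma \ref{Linfty1}. The matching of these two parametrices on the overlap annulus $|z|\approx q^{-n/4}$ is what fixes the constants $\mu_2,\eta_2,\lambda_3,c_\Psi,\mu_4,\mathcal H,c_\varPsi$ appearing in the statement: these are precisely the connection coefficients relating the two bases of solutions of the respective $q$-difference equations, read off from the leading terms of $a,b$ at infinity and of $a_\infty,b_\infty$ at the origin. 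I would set up the global parametrix $N(z)$ away from these regions, glue in the local parametrices $P^{(0)}$ and $P^{(\infty)}$, and define the final error matrix $R(z)=(\text{global}\cup\text{local})^{-1}\,S(z)$; the ratio $\mu_2/\eta_2$, etc., is extracted from the leading asymptotics of $N$ itself, carrying through the conjugations.

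The third step is the small-norm estimate: I would show that $R(z)$ solves an RHP with jump $I+O(q^{n/4})$ uniformly on its contour, and appeal to the usual $q$-analogue of the small-norm theorem (which in this discrete setting should already be available, e.g.\ via a Neumann series / Cauchy-operator bound on $\Gamma$ and the circles bounding the local regions) to conclude $R(z)=I+O(q^{n/4})$ uniformly. Unwinding $Y=C^n R N C^{-n}$ (and its lensed variants) in the two regions $|z|\le q^{-n/4}$ and $|z|>q^{-n/4}$, reading off the $(1,1)$ and $(2,1)$ entries, and collecting the explicit scalar prefactors from the conjugations, yields exactly the two displayed formulas for $P_n(z)$ and for $P_{n-1}(z)$, with the error $O(q^{n/4})$ inherited from $R$. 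The restriction to even $n$ enters because the weight $w$ is even, so $\beta_n=0$ and the parametrices have the parity structure that makes the $(-1)^{n/2}$ and the particular powers of $q$ come out as stated; the odd-$n$ case would require a parallel but slightly shifted computation.

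The main obstacle I expect is the construction and, above all, the \emph{matching} of the local parametrices: unlike the classical case where Airy or Bessel model problems plug in off the shelf, here the model problems are governed by genuinely $q$-difference equations \eqref{near 0 q diff} and \eqref{near infty q diff}, and one must (i) verify that $a,b$ and $a_\infty,b_\infty$ from Lemmas \ref{L01} and \ref{Linfty1} really do solve the relevant model RHPs with the correct jumps, and (ii) compute their connection data precisely enough to identify all the constants in the theorem and to confirm the overlap jump is $I+O(q^{n/4})$ rather than $O(1)$. The second-hardest point is controlling the deformation of the fixed Jordan curve $\Gamma$ together with the infinitely many lattice points $\pm q^k$ and the auxiliary points $e^{i\pi(1+2n)/4}q^{-k}$ so that the ``away'' part of the jump is uniformly small in the right scale; the \emph{appropriate}/\emph{admissible} conditions of Definition \ref{admissable} are exactly what should make this possible, but verifying uniformity over the whole (unbounded, discrete) configuration is the delicate estimate.
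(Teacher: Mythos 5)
Your proposal follows essentially the same route as the paper: the $qF_{II}$ RHP is conjugated by the scalar function $g(z)$ and the diagonal scaling $c_n$ (Section \ref{trans section}), the two model problems are precisely the near-field $\mathfrak{W}$-RHP solved by $a(z),b(z)$ and the far-field $\mathcal{W}$-RHP solved by $a_{\infty}(t),b_{\infty}(t)$, with the constants $\mu_2,\eta_2,\lambda_3,\mu_4,\mathcal{H},c_\varPsi$ arising exactly as the $h_q$-connection coefficients you describe, and the gluing at $|z|=q^{-n/4}$ yields an error matrix $R$ with jump $I+O(q^{n/4})$ controlled by the small-norm theorem, after which the entries are unwound as you indicate. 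The only cosmetic differences are that no lens opening is required (the triangular jumps are treated directly after the $g$-conjugation) and the paper glues just the two parametrices along one circle rather than introducing a separate global outer parametrix.
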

Note that in the statement of Theorem \ref{main result 1}, $\mu_2$, $\eta_2$, $\mu_4$, $\mathcal{H}$ and $c_\varPsi$ are constants determined in Sections \ref{near-field rhp section} and \ref{far-field RHP section} which do not depend on $z$ or $n$. Our second main result concerns the asymptotic behaviour of recurrence coefficients and $L_2$ norm of $P_n$ as $n$ approaches infinity.
\begin{theorem}\label{main result 2}
Under the same hypotheses as Theorem \ref{main result 1}, we have for even $n\in\mathbb{N}$:
\begin{eqnarray*}
   \gamma_{n} &=& q^{\frac{n}{2}(1-n)}\left( A + O(q^{n/2})\right),\\
  \gamma_{n-1} &=& q^{\frac{n}{2}(3-n)}\left( B^{-1} + O(q^{n/2})\right),\\
   \alpha_n &=& q^{-n}(q+O(q^{n/2})),
\end{eqnarray*}
for some constants $A$ and $B$, where $\gamma_n$ and $\alpha_n$ are defined by Equations \eqref{original orthogonal} and \eqref{intro recurrence eqn} respectively.
\end{theorem}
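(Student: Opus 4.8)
The strategy is to read off all three quantities from the $L^2$ norms $\gamma_n$ and $\gamma_{n-1}$, using the pointwise asymptotics of Theorem~\ref{main result 1} together with the elementary identity $\alpha_n=\gamma_n/\gamma_{n-1}$ recorded in~\eqref{intro recurrence eqn}. Writing the Jackson integral explicitly, and using that $w$ is even while $P_n$ is even for even $n$ (and $P_{n-1}^2$ is even as well),
\[
\gamma_n=\int_{-\infty}^{\infty}P_n(x)^2w(x)\,d_qx=2\sum_{k\in\mathbb Z}P_n(q^k)^2\,w(q^k)\,q^k ,
\]
and similarly for $\gamma_{n-1}$. I would split each sum at the crossover $k=-n/4$ prescribed by Theorem~\ref{main result 1}. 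On the inner part $k\ge -n/4$ the polynomial is $O\big(q^{-\frac n2(\frac n2-1)}\big)$ while $\sum_{k\ge -n/4}a(q^k)^2w(q^k)q^k$ stays bounded as $n\to\infty$ (the $k\to+\infty$ tail is geometric and the $k\to-\infty$ tail is killed by $w(q^k)\sim q^{2k^2}$), so the inner part contributes only $O\big(q^{\frac n2(2-n)}\big)=O(q^{n/2})\cdot q^{\frac n2(1-n)}$, i.e. below the claimed order. The dominant contribution therefore comes from the far field $k=-m$, $m>n/4$; this is the $q$-analogue of the mass of the rescaled equilibrium measure concentrating near $|t|=|z\,q^{n/2}|\asymp 1$.

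On that range I would set $m=\tfrac n2+j$ with $j\in\mathbb Z$ (legitimate since $n$ is even), so $t=q^{-j}$, and use the large-argument behaviour of the weight,
\[
w(q^{-m})=\frac{1}{(-q^{-4m};q^{4})_\infty}=\frac{q^{2m^2+2m}}{2\,(-q^{4};q^{4})_\infty^{2}}\big(1+O(q^{4m})\big),
\]
obtained by pulling the negative powers of $q$ out of the first $m$ factors of the Pochhammer symbol. Inserting $P_n(q^{-m})^2=q^{-2mn}a_\infty(q^{-j})^2\big(1+O(q^{n/4})\big)^2$ together with the Jackson factor $q^{-m}$, all the $n$-dependent powers collapse to $q^{-n^2/2+n/2}=q^{\frac n2(1-n)}$ and the residual $j$-sum is a convergent theta-type series
\[
A=\frac{1}{(-q^{4};q^{4})_\infty^{2}}\sum_{j\in\mathbb Z}a_\infty(q^{-j})^2\,q^{2j^2+j},
\]
whose tail beyond $|j|>n/4$ is doubly exponentially small; this gives $\gamma_n=q^{\frac n2(1-n)}\big(A+O(q^{n/2})\big)$, the error being the square of the $O(q^{n/4})$ relative error of Theorem~\ref{main result 1}. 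Running the same computation for $P_{n-1}$ — whose far-field asymptotics in Theorem~\ref{main result 1} already carries a factor $\gamma_{n-1}$ — produces a self-consistent relation of the shape $\gamma_{n-1}=(\mathrm{const})\,q^{2n(\frac n2-1)}\,q^{\frac n2(1-n)}\,\gamma_{n-1}^{2}\big(1+O(q^{n/2})\big)$; solving for $\gamma_{n-1}$ yields $\gamma_{n-1}=q^{\frac n2(3-n)}\big(B^{-1}+O(q^{n/2})\big)$, where $B$ is built from the corresponding series in $b_\infty$ (Lemma~\ref{Linfty1}) and the matching constants $\mu_4,\mathcal H,c_{\varPsi}$.

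It then follows at once that $\alpha_n=\gamma_n/\gamma_{n-1}=q^{-n}\big(AB+O(q^{n/2})\big)$, so it only remains to identify the constant $AB$. I would pin it down by substituting the ansatz $\alpha_n=q^{-n}c_n$ into the string equation~\eqref{qPainleve 4}: after multiplying through by $q^{2n}$ the non-autonomous right-hand side and the $q^{n-1}\alpha_n^2$ term both vanish as $n\to\infty$, leaving $q^{-1}c_nc_{n+1}+q^{-1}c_nc_{n-1}-q^{-3}c_n^2c_{n+1}c_{n-1}\to q$, which for a convergent positive limit $c_n\to c$ forces $(c^2-q^2)^2=0$, hence $c=q$ and $AB=q$. (Equivalently, $AB=q$ can be obtained intrinsically from a Wronskian-type relation between the two solutions $a_\infty,b_\infty$ of~\eqref{near infty q diff}, namely the relation that normalises the far-field parametrix.)

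The principal obstacle is the power bookkeeping in the middle step: one must verify that the three independent sources of $n$-dependence — the normalisation prefactors $q^{\mp\frac n2(\frac n2-1)}$ and $q^{n(\frac n2-1)}$ of $P_n$ and $P_{n-1}$, the Pochhammer weight $w(q^{-m})\sim q^{2m^2+2m}$ evaluated at the scale $m\sim n/2$, and the Jackson lattice factor $q^{-m}$ — combine to give precisely the exponents $\tfrac n2(1-n)$ and $\tfrac n2(3-n)$, and that the resulting $j$-series converge and are effectively supported on bounded $|j|$, so that both the cutoff and the $O(q^{n/4})$ relative errors enter only at order $q^{n/2}$. A secondary point is fixing the exact constant $q$ in the asymptotics of $\alpha_n$, which is where either the string equation~\eqref{qPainleve 4} or the Wronskian identity must be invoked.
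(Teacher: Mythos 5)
Your route is genuinely different from the paper's: you evaluate the Jackson sums for $\gamma_n,\gamma_{n-1}$ directly from the pointwise asymptotics of Theorem \ref{main result 1}, whereas the paper never touches the sums at all — it reads $\gamma_nc_n^{2n}$ and $\gamma_{n-1}^{-1}c_n^{-2n}$ off the $1/z$ coefficient of $W_n(z)$ and matches it against the $1/t$ coefficient of the far-field parametrix through $W_n=R\cdot G$ with $R=I+O(q^{n/4})$. Much of your computation is sound: the expansion $w(q^{-m})=q^{2m^2+2m}\bigl(2(-q^4;q^4)_\infty^2\bigr)^{-1}(1+O(q^{4m}))$ is correct, the exponents $-2mn+2m^2+2m-m$ at $m=\tfrac n2+j$ do collapse to $\tfrac n2(1-n)+2j^2+j$, the self-consistency argument for $\gamma_{n-1}$ (which is really the monicity normalisation hidden in the prefactor of the far-field formula) gives the exponent $\tfrac n2(3-n)$, and your determination of the constant by substituting $\alpha_n=q^{-n}c_n$ into \eqref{qPainleve 4} and forcing $(c^2-q^2)^2=0$ is correct and legitimately non-circular (Theorem \ref{same diff} is proved independently of the asymptotics); it is in fact more explicit than the paper's terse appeal to Theorem \ref{theorem diff asym} at the end of its proof.

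The genuine gap is the order of the error terms. Theorem \ref{main result 1} only supplies a relative error $O(q^{n/4})$, and your claim that the error in $\gamma_n$ is ``the square of the $O(q^{n/4})$ relative error'' is false: $(1+O(q^{n/4}))^2=1+O(q^{n/4})$, so term-by-term substitution into the Jackson sum yields at best $\gamma_n=q^{\frac n2(1-n)}\bigl(A+O(q^{n/4})\bigr)$, and likewise for $\gamma_{n-1}$ and $\alpha_n$ — weaker than the stated $O(q^{n/2})$. The paper obtains $O(q^{n/2})$ precisely because of a scale separation your black-box use of Theorem \ref{main result 1} discards: in the coefficient matching, the error enters the $1/z$ coefficient of $W_n$ as $R_1q^{-n/4}$ with $R_1=O(q^{n/4})$, i.e.\ at size $O(1)$, while the main term $Aq^{-n/2}$ is larger by $q^{-n/2}$. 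Equivalently, at the lattice points that dominate your sum one has $|t|\asymp1$, i.e.\ $\zeta=zq^{n/4}\asymp q^{-n/4}$, where $R(\zeta)-I\approx R_1/\zeta=O(q^{n/2})$, so the true local relative error of the far-field approximation there is $O(q^{n/2})$, not the uniform $O(q^{n/4})$ of Theorem \ref{main result 1}; to close your argument you must either prove and use this refined local estimate or revert to the paper's Laurent-coefficient matching. A secondary point: the boundedness of the inner sum $\sum_{k\ge-n/4}a(q^k)^2w(q^k)q^k$ is true, but not merely because $w$ decays like $q^{2k^2}$ — off the lattice $a$ grows at a compensating rate $\sim|g|$, and it is only the on-lattice cancellation (via $a=g(\eta_1h_q\varphi_{odd}+\eta_2\varphi_{even})$, $g(q^{-\ell})=0$ and the finiteness of $g^2h_q^2w$, giving $a(q^{-\ell})^2w(q^{-\ell})q^{-\ell}=O(q^{\ell})$) that makes the inner contribution $O(q^{n/2})$ relative; this needs to be said explicitly.
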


Our third main theorem answers the question of uniqueness posed by Ismail \textit{et al.} in \cite[Remark 6.4]{ISMAIL2010518}.
\begin{theorem}\label{main result 3}
There exists infinitely many real positive solutions of the discrete equation
\begin{equation} 
    \alpha_{n}(\alpha_{n+1}+q^{n-1}\alpha_{n}+q^{-2}\alpha_{n-1}-q^{2n-3}\alpha_{n+1}\alpha_{n}\alpha_{n-1}) = (q^{-n}-1)q^{1-n}.
\end{equation}
Furthermore, in general 
\begin{equation}
    \lim_{n\to\infty} \frac{\alpha^{(c)}_n - \alpha^{(1)}_n}{q^{1-n}- \alpha^{(1)}_n} \neq 0,
\end{equation}
where $\{\alpha^{(c)}_n\}_{n=0}^\infty$ is the sequence of recurrence coefficients corresponding to polynomials with orthogonality condition given by Equation \eqref{intro general c ortho}.

\end{theorem}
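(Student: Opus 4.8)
The plan is to establish the two assertions separately, deriving both from the asymptotic expansions already obtained in Theorems \ref{main result 1} and \ref{main result 2}, applied not only to the weight $w(z)d_qz$ but also to the one-parameter family of weights $w(cz)d_qz$ with $q<c\le 1$. First I would verify that the recurrence coefficients $\alpha_n^{(c)}$ of the $qF_{II}^{(c)}$ polynomials satisfy the same $q$-discrete equation \eqref{qPainleve 4}; this is the content of the extension of the Ismail \textit{et al.} computation announced for Section \ref{recurrence coefficients section}, and follows the string-equation / ladder-operator argument of \cite{Boelen} verbatim, since the only input is the $q$-difference equation \eqref{general qdiff} satisfied by $u_4$, which is unchanged under $x\mapsto cx$. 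Granting this, every sequence $\{\alpha_n^{(c)}\}_{n\ge 0}$, as $c$ ranges over $(q,1]$, is a real positive solution of \eqref{qPainleve 4} — real and positive because $\gamma_n^{(c)}>0$ for a positive measure and $\alpha_n^{(c)}=\gamma_n^{(c)}/\gamma_{n-1}^{(c)}$.

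The crux of the first assertion is therefore to show that distinct values of $c$ give genuinely distinct solution sequences, so that the family is infinite rather than collapsing to a single sequence. Here I would use Theorem \ref{main result 2}: for the weight $w(cz)d_qz$ one reruns the RHP analysis with the rescaled weight and finds $\gamma_n^{(c)} = q^{\frac{n}{2}(1-n)}(A(c)+O(q^{n/2}))$ and $\gamma_{n-1}^{(c)} = q^{\frac{n}{2}(3-n)}(B(c)^{-1}+O(q^{n/2}))$, where $A(c),B(c)$ are the analogues of the constants $A,B$, now carrying an explicit dependence on $c$ through the rescaled jump data of the near-field and far-field parametrices (essentially through factors like $(-c^4q^{\cdots};q^4)_\infty$ and powers of $c$). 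The subleading correction to $\alpha_n^{(c)} = q^{-n}(q + r_n(c))$ with $r_n(c)\to 0$ is what distinguishes the sequences: if two parameters $c_1\ne c_2$ produced the same sequence, then all $\gamma_n$ would agree up to the overall normalisation, forcing a rigid relation between $A(c_1),B(c_1)$ and $A(c_2),B(c_2)$ that I would show fails for generic $c$ by computing these constants explicitly (or at least their leading $c$-dependence) from the parametrices of Sections \ref{near-field rhp section} and \ref{far-field RHP section}. Alternatively, and perhaps more cleanly, one can argue that the initial data $\alpha_0^{(c)},\alpha_1^{(c)}$ — expressible as ratios of the first few moments $\int x^{2j}w(cx)d_qx$ — depend analytically and non-constantly on $c$, and since \eqref{qPainleve 4} determines the whole sequence from two consecutive values, distinct initial data give distinct sequences; injectivity of $c\mapsto(\alpha_0^{(c)},\alpha_1^{(c)})$ on a subinterval then yields infinitely many solutions.

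For the second assertion I would compute the limit $\lim_{n\to\infty}\bigl(\alpha_n^{(c)}-\alpha_n^{(1)}\bigr)/\bigl(q^{1-n}-\alpha_n^{(1)}\bigr)$ directly from the refined asymptotics. Writing $\alpha_n^{(c)} = q^{1-n}\bigl(1+\rho_n(c)\bigr)$ with $\rho_n(c)\to0$, the ratio becomes $\bigl(\rho_n(c)-\rho_n(1)\bigr)/\bigl(-\rho_n(1)\bigr)$, so the claim is equivalent to $\rho_n(c)/\rho_n(1)\not\to 1$ in general, i.e.\ the next-order terms in the expansion of $\alpha_n$ genuinely see the parameter $c$. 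I expect $\rho_n(1) = O(q^{n/2})$ with a computable constant coefficient $\kappa(1)$ (from matching the $O(q^{n/4})$ error terms through the $L_2$-norm ratio in Theorem \ref{main result 2}), and likewise $\rho_n(c)\sim \kappa(c)q^{n/2}$; the statement then reduces to $\kappa(c)\ne\kappa(1)$ for generic $c$, which I would verify from the $c$-dependence of the constants $A,B$ and the first correction term in the far-field parametrix. \emph{The main obstacle} I anticipate is precisely this last bookkeeping step: tracking the parameter $c$ through the Deift–Zhou-style error estimates carefully enough to extract the constant in front of the leading correction to $\alpha_n^{(c)}$, rather than merely the $O(q^{n/2})$ bound stated in Theorem \ref{main result 2}. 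This requires upgrading the error analysis of Sections \ref{near-field rhp section}–\ref{far-field RHP section} to one more order, or at least isolating the single matrix entry of the parametrix product that controls $\gamma_n^{(c)}$ and showing its $c$-derivative is nonzero. If a fully explicit constant proves too laborious, a serviceable fallback is the soft argument via analytic, non-constant dependence of the low-order moments on $c$ together with the fact that non-identical real-analytic functions of $c$ cannot have identical asymptotic tails after the rigid rescaling dictated by \eqref{qPainleve 4}.
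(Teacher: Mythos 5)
Your first half---that every $\{\alpha^{(c)}_n\}$ obeys Equation \eqref{4structure}, with positivity coming from $\alpha^{(c)}_n=\gamma^{(c)}_n/\gamma^{(c)}_{n-1}$ and the Boelen-style ladder computation---is exactly the paper's Theorem \ref{same diff}, and is fine. The genuine gap is in your treatment of the second assertion (and, through it, distinctness). You reduce everything to the claim $\alpha^{(c)}_n=q^{1-n}(1+\kappa(c)q^{n/2}+\cdots)$ with $\kappa(c)\neq\kappa(1)$, but this requires two things that are neither proved by you nor available from the paper: (i) that $q^{1-n}-\alpha^{(1)}_n$ really has a nonvanishing leading coefficient at the definite rate $q^{n/2}$ (Theorems \ref{main result 1} and \ref{main result 2} give only $O(q^{n/4})$, $O(q^{n/2})$ \emph{error bounds}, with no control on the sign or nonvanishing of the correction), and (ii) the explicit $c$-dependence of that subleading coefficient, which would need the parametrix/error analysis of Sections \ref{near-field rhp section}--\ref{glue section} pushed one order further---precisely the step you flag as the main obstacle and do not carry out. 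Your fallback (``non-identical analytic dependence on $c$ cannot give identical asymptotic tails'') is not a proof and is in fact contradicted in this problem: $c=1$ and $c=q^{1/2}$ give literally the same polynomials and the same recurrence coefficients (the paper's Lemma \ref{lemma 8.1}), so the map $c\mapsto(\alpha^{(c)}_0,\alpha^{(c)}_1)$ you invoke for the first assertion is not injective, and the statement of the theorem is only true ``in general'' because $c=q^{1/2}$ is a true exception; at best you could argue non-constancy of $\alpha_1^{(c)}$, which you also do not establish.

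The paper's proof (Theorem \ref{theorem diff asym}) avoids any second-order computation by a rigidity argument run in the opposite direction: assume the limit in question is zero; matching to that order forces the $n\to\infty$ limit of the $c$-deformed Lax pair (obtained from Equation \eqref{the c deriv} and the three-term recurrence) to coincide with the limiting equation \eqref{Y near 0 diff} for $c=1$, so $Y^{(c)}_{2n}(z)$ converges to a matrix built from the same $a(z)$, $b(z)$ but with $h_q(z/c)$ in the second column; analyticity of the solution at the poles of $w(z)$ then forces $h_q(z/c)^2=\lambda_1\lambda_2$ at $z=e^{\pm i\pi/4},e^{\pm 3i\pi/4}$, i.e.\ $h_q(z/c)$ real or purely imaginary there, and by the properties of $h_q$ along that ray (Lemma \ref{ray zeros}, Remark \ref{sym imag}) this can only happen for $c=1,q^{1/2}$---a contradiction for all other $c$, which simultaneously yields the infinitude of distinct positive solutions. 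If you wish to salvage your route, you must supply the refined second-order asymptotics with explicit $c$-dependence, including a proof that the leading correction to $\alpha^{(1)}_n$ does not vanish; as written, the proposal does not prove the second assertion.
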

Theorem \ref{main result 3} immediately follows from Theorem \ref{theorem diff asym} proved in Section \ref{recurrence coefficients section}.
\subsection{Outline}
This paper is structured as follows. In Section \ref{statement of rhp section} we state and solve a RHP (Definition \ref{quartic RHP}) whose solution is given in terms of $qF_{II}$ polynomials. We then make a series of transformations to this RHP in Section \ref{trans section}. By taking the limit $n\rightarrow \infty$ this motivates the form of a near-field and far-field RHP, whose solutions are determined in Sections \ref{near-field rhp section} and \ref{far-field RHP section} respectively. In Section \ref{glue section} we glue together the near and far-field solutions to approximate our initial RHP in the limit $n\to\infty$. Consequently we prove Theorems \ref{main result 1} and \ref{main result 2} in Section \ref{main proof section}. In Section \ref{recurrence coefficients section} we discuss the implications of our results to the recurrence coefficients of $qF_{II}$ polynomials and prove Theorem \ref{main result 3}. In Appendix \ref{Properties of hq} we prove some important properties of the function $h_q(z)$ which are used in solving the near and far-field RHPs. Finally, for completeness we prove a well-known result concerning RHPs whose jump approaches the identity in Appendix \ref{R to I section}.

\section{Statement of RHP}\label{statement of rhp section}
We begin the main arguments of this paper by introducing and solving a RHP (Definition \ref{quartic RHP}) whose solution is given in terms of $qF_{II}$ orthogonal polynomials. 
\begin{definition}[$qF_{II}$ RHP]\label{quartic RHP}
Let $\Gamma$ be an appropriate curve (see Definition \ref{admissable}) with interior $\mathcal D_-$ and exterior $\mathcal D_+$. A $2\times 2$ complex matrix function $Y_n(z)$, $z\in\mathbb C$, is a solution of the $qF_{II}$ RHP if it satisfies the following conditions:
\begin{enumerate}[label={{\rm (\roman *)}}]
\begin{subequations}\label{new rhp}
\item $Y_n(z)$ is meromorphic in $\mathbb{C}\setminus \Gamma$,  with simple poles at $z=\pm q^{-k}$ for $1\le k\in\mathbb{N}$.
\item $Y_n(z)$ has continuous boundary values $Y_n^-(s)$ and $Y_n^+(s)$ as $z$ approaches $s\in\Gamma$ from $\mathcal D_-$ and $\mathcal D_+$ respectively, where
    \begin{gather} \label{new jump}
      Y_n^+(s)
     =
      Y_n^-(s)
      \begin{bmatrix}
       1 &
       h_q(s)w(s)\\
       0 &
       1
       \end{bmatrix}, \; s\in \Gamma,
    \end{gather}
    and $w(s)$ is defined in Equation \eqref{w def}.
    
\item The residue at each pole $z=\pm q^{-k}$ for $1\le k\in \mathbb{N}$ is given by
    \begin{gather} \label{res cond}
    \mathrm{Res}(Y_n(\pm q^{-k}))
     =
      \lim_{z \rightarrow \pm q^{-k}} Y_n(z)
      \begin{bmatrix}
       0 &
       (z\mp q^{-k})h_q(z)w(z)  \\
       0 &
       0
       \end{bmatrix}.
    \end{gather}
    
\item $Y_n(z)$ satisfies
    \begin{equation}\label{Yinfty decay}
         Y_n(z)\begin{bmatrix}
       z^{-n} &
       0 \\
       0 &
       z^n
       \end{bmatrix} = I + O\left( \frac{1}{|z|} \right), \text{ as $|z| \rightarrow \infty$} ,
    \end{equation} 
    for $z$ such that $|z \pm q^{-k}| > r$, for all $1\le k\in \mathbb{N}$, for fixed $r>0$.
    %Note that $Y_n(\pm q^{-k})$ has poles in the second column for $k\in \mathbb{N}_1$. Thus, the decay condition does not hold near these poles. For example: the decay condition holds for $z$ such that $|z \pm q^{-k}| > r$, for all $k\in \mathbb{N}_1$, for fixed $r>0$.
    
\end{subequations} 
\end{enumerate} 
\end{definition}

\begin{remark}
Note that the matrix $Y_n(\pm q^{-k})$ has poles in its second column for $1\le k\in \mathbb{N}$. Thus, the asymptotic decay does not hold near these poles. This is why, following Equation \eqref{Yinfty decay}, we require the added condition $z$ must be such that $|z \pm q^{-k}| > r$, for all $1\le k\in \mathbb{N}$, for fixed $r>0$.
\end{remark}
We now determine the solution of the $q$-RHP.
\begin{lemma}
The unique solution of the $q$-RHP given by Definition \ref{new rhp} is given by
\begin{gather} \label{RHP sol}
Y_n(z) 
=
\begin{bmatrix}
   P_{n}(z) &
   \oint_{\Gamma}\frac{P_{n}(s)w(s)h_q(s)}{2\pi i (z-s)}ds + \int_{q^{-1}}^\infty \frac{P_n(\pm s)w(\pm s)}{z\mp s} d_qs \\
   \gamma_{n-1}^{-1} P_{n-1}(z) &
   \oint_{\Gamma}\frac{P_{n-1}(s)w(s)h_q(s)}{ 2\pi i (z-s)\gamma_{n-1}}ds + \int_{q^{-1}}^\infty \frac{P_{n-1}(\pm s)w(\pm s)}{(z\mp s)\gamma_{n-1}} d_qs
   \end{bmatrix},
\end{gather}
where $\{P_n(z)\}_{n=0}^\infty$ satisfies the orthogonality condition
\begin{equation*}
    \int_{-\infty}^\infty P_n(s)P_m(s)w(s) d_qs = \gamma_n\delta_{n,m}.
\end{equation*}
\end{lemma}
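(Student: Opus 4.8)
The statement is the $q$-analogue of the Fokas--Its--Kitaev characterisation of orthogonal polynomials by a Riemann--Hilbert problem, so the plan has two parts: first verify that the explicit matrix \eqref{RHP sol} satisfies conditions (i)--(iv) of Definition \ref{quartic RHP}, and then prove uniqueness by a Liouville argument. For the existence part I would treat the two columns separately. The first column $(P_n,\gamma_{n-1}^{-1}P_{n-1})^{\mathrm T}$ consists of polynomials, hence is entire, which disposes of the meromorphy requirement and the residue bookkeeping for this column; it is left unchanged by right multiplication by the unipotent jump matrix in \eqref{new jump}, so the first-column half of (ii) is automatic; and since $P_n$ is monic of degree $n$ while $\deg P_{n-1}=n-1$, multiplying by $z^{-n}$ produces $(1,0)^{\mathrm T}+O(1/z)$, which is the first column of \eqref{Yinfty decay}.

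For the second column the entries are $q$-Cauchy transforms, namely a contour piece $\oint_\Gamma\frac{P_j(s)w(s)h_q(s)}{2\pi i(z-s)}\,ds$ plus a Jackson-sum piece $\int_{q^{-1}}^{\infty}\frac{P_j(\pm s)w(\pm s)}{z\mp s}\,d_qs$ (with $j=n$ and $j=n-1$ respectively, the latter scaled by $\gamma_{n-1}^{-1}$). The Jackson piece is holomorphic in a neighbourhood of $\Gamma$ --- its only singularities are simple poles at $\pm q^{-k}$, $k\ge 1$, which lie in $\mathcal D_+$ and are bounded away from $\Gamma$ because $\Gamma$ is appropriate --- so it does not contribute to the jump, while by the Sokhotski--Plemelj formula the contour piece has jump exactly $P_j(s)w(s)h_q(s)$ across $\Gamma$, matching the off-diagonal entry of \eqref{new jump}; this gives (ii). The residues in (iii) come entirely from the Jackson piece, whose residue at $z=q^{-k}$ is $P_j(q^{-k})w(q^{-k})q^{-k}$, whereas the only nonzero entry of the right-hand side of \eqref{res cond} evaluates to $P_j(q^{-k})w(q^{-k})\lim_{z\to q^{-k}}(z-q^{-k})h_q(z)=P_j(q^{-k})w(q^{-k})q^{-k}$ since $h_q$ has a simple pole of residue $q^{-k}$ there; the two agree, and likewise at $-q^{-k}$. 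Finally, expanding $\tfrac1{z-s}=\sum_{\ell\ge0}s^\ell z^{-\ell-1}$ and collapsing the contour piece onto the poles $\pm q^{k}$, $k\ge0$, of $h_q$ enclosed by $\Gamma$, the contour and Jackson pieces combine so that the coefficient of $z^{-\ell-1}$ in the $(1,2)$ entry is $\pm\int_{-\infty}^{\infty}P_n(s)s^\ell w(s)\,d_qs$, which vanishes for $0\le\ell\le n-1$ by orthogonality; hence that entry is $O(z^{-n-1})$. For the $(2,2)$ entry one replaces $P_n$ by $\gamma_{n-1}^{-1}P_{n-1}$: the coefficients vanish for $0\le\ell\le n-2$, and the $\ell=n-1$ coefficient is $\pm\gamma_{n-1}^{-1}\int_{-\infty}^{\infty}P_{n-1}(s)s^{n-1}w(s)\,d_qs=\pm1$ using $\int P_{n-1}s^{n-1}w\,d_qs=\gamma_{n-1}$, so that entry is $z^{-n}(1+O(1/z))$; the sign is $+1$ once the orientation of $\Gamma$ is fixed. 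This establishes (iv), so \eqref{RHP sol} is a solution.

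For uniqueness I would first show $\det Y_n\equiv1$. Since the jump matrix in \eqref{new jump} has determinant $1$, $\det Y_n$ has no jump across $\Gamma$; and although $Y_n$ has simple poles at $\pm q^{-k}$, the residue matrix in \eqref{res cond} is rank one of the form $q^{-k}w(\pm q^{-k})$ times the outer product of the (regular) first column of $Y_n$ at $\pm q^{-k}$ with the row vector $e_2^{\mathrm T}$, which makes $\det Y_n$ bounded at each $\pm q^{-k}$; since $\det Y_n\to1$ at infinity by \eqref{Yinfty decay}, Liouville gives $\det Y_n\equiv1$. The same argument applies to any solution $\hat Y$, so $\hat Y^{-1}=\operatorname{adj}(\hat Y)$ exists, and $R:=Y_n\operatorname{adj}(\hat Y)$ has no jump on $\Gamma$ and tends to $I$ at infinity. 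It remains to check $R$ is regular at $\pm q^{-k}$: writing the Laurent tails of $Y_n$ and $\hat Y$ there in terms of the rank-one residue form dictated by \eqref{res cond}, and using the identity $\operatorname{adj}(M)=(\operatorname{tr}M)I-M$ for $2\times2$ matrices, the $(z\mp q^{-k})^{-2}$ and $(z\mp q^{-k})^{-1}$ coefficients of $R$ cancel (the relevant contractions involve $e_2^{\mathrm T}$ applied to the common first columns, which appears with opposite signs). Hence $R$ is entire and bounded, so $R\equiv I$ and $\hat Y=Y_n$.

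The step I expect to be the main obstacle is the $q$-calculus bookkeeping that ties the contour integrals over $\Gamma$ to the Jackson sums: identifying $\oint_\Gamma f(s)h_q(s)\,ds$ with $2\pi i\sum_k\big(f(q^k)+f(-q^k)\big)q^k$, and justifying the attendant convergence and residue computations, relies on the precise pole structure and the $q$-periodicity $h_q(qz)=h_q(z)$ of $h_q$, which I would import from Appendix \ref{Properties of hq}; keeping all signs and the orientation of $\Gamma$ consistent so the normalisation at infinity comes out to $+I$ rather than $-I$ is the other place requiring care. As an alternative to the $R$-argument one can prove uniqueness directly: conditions (i) and (iv) force the first column of any solution to be $(\tilde P_n,\tilde Q_{n-1})^{\mathrm T}$ with $\tilde P_n$ monic of degree $n$ and $\deg\tilde Q_{n-1}\le n-1$; Liouville applied to the difference of each second-column entry and the $q$-Cauchy transform of its first-column partner shows the second column is determined by the first; and then (iv) forces $\tilde P_n$ to be orthogonal to $1,s,\dots,s^{n-1}$ and $\tilde Q_{n-1}=\gamma_{n-1}^{-1}P_{n-1}$, so since $w(s)=1/(-s^4;q^4)_\infty>0$ on the real $q$-lattice the monic orthogonal polynomials are unique and $\tilde P_n=P_n$.
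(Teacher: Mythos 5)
Your proposal is correct and takes essentially the same route as the paper: the same splitting into a polynomial first column and a Cauchy-transform-plus-Jackson-sum second column, the same collapsing of the contour integral over $\Gamma$ onto the poles $\pm q^k$ of $h_q$ to recover the full $q$-integral, the same identification of the decay condition \eqref{Yinfty decay} with orthogonality via the geometric expansion of $1/(z-s)$, and the same determinant-plus-Liouville uniqueness argument. The only differences are cosmetic: you verify the given formula against conditions (i)--(iv) whereas the paper derives the formula from the conditions, and you spell out the pole cancellation in the uniqueness step (via the $2\times2$ adjugate) that the paper only asserts.
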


\begin{proof}
The proof follows along similar lines to \cite[Section 2(a)]{qRHP}, with adjustments needed for the current case where the orthogonality weight is not contained in a compact set in $\mathbb{R}$. 

We show that the second row of $Y_n(z)$ must be given by Equation \eqref{RHP sol}. A similar argument can be carried out for the first row. To declutter notation we will label $Y_n(z)$ as $Y(z)$ for the rest of this proof.

It follows from the asymptotic condition, Equation \eqref{Yinfty decay}, that the $(1,1)$ entry of $Y$ must have leading order $z^{n}$ as $z\rightarrow \infty$. As $Y_{(1,1)}(z)$ is analytic and its jump condition, Equation \eqref{new jump}, is given by the identity we immediately conclude that $Y_{(1,1)}(z)$ is a monic polynomial of degree $n$. Similarly, it follows that $Y_{(2,1)}(z)$ is a polynomial of degree at most $n-1$. We denote $Y_{(2,1)}(z)$ by $Q_{n-1}(z)$.  

Consider the bottom right entry of Equation \eqref{RHP sol}. By the jump condition, Equation \eqref{new jump}, we have
\begin{equation}\label{1d jump}
Y_{(2,2)}^{+}(s) = Q_{n-1}(s)w(s)h_q(s) + Y_{(2,2)}^{-}(s) \,. \end{equation}
If there was no residue condition, Equation \eqref{res cond}, then this scalar equation would be solved by the Cauchy transform
\begin{equation}
    Y_{(2,2)}(z) = \frac{1}{2\pi i}\oint_{\Gamma}\frac{Q_{n-1}(s)w(s)h_q(s)}{z-s}ds \,,
\end{equation} 
which is analytic in $\mathbb{C}\setminus \Gamma$ and satisfies Equation \eqref{1d jump}. The residue condition can be readily resolved by letting 
\begin{equation}\label{y22 solved}
    Y_{(2,2)}(z) = \frac{1}{2\pi i}\oint_{\Gamma}\frac{Q_{n-1}(s)w(s)h_q(s)}{z-s}ds + \int_{q^{-1}}^\infty \frac{Q_{n-1}(\pm s)w(\pm s)}{(z\mp s)} d_qs\,,
\end{equation} 
which satisfies both Equations \eqref{new jump} and \eqref{res cond}.
The only step remaining is to prove the asymptotic condition, Equation \eqref{Yinfty decay}, for $Y_{(2,2)}(z)$. Substituting our expression for $h_q(s)$ into Equation \eqref{y22 solved}, we find
\begin{align*}
    Y_{(2,2)}(z) &= \frac{1}{2\pi i}\oint_{\Gamma}\sum_{k=-\infty}^{\infty} \left( \frac{q^{k}}{s-q^{k}}\frac{Q_{n-1}(s)w(s)}{z-s} + \frac{q^{k}}{s+q^{k}}\frac{Q_{n-1}(s)w(s)}{z-s} \right) ds \\
    &\qquad + \int_{q^{-1}}^\infty \frac{Q_{n-1}(\pm s)w(\pm s)}{(z\mp s)} d_qs.
\end{align*} 
which by Cauchy's integral formula, for $z\in \text{ext}(\Gamma)$, becomes
\begin{eqnarray*}
Y_{(2,2)}(z) &=& \sum_{k=0}^{\infty} \left( q^{k}\frac{Q_{n-1}(q^{k})w(q^{k})}{z-q^{k}} + q^{k}\frac{Q_{n-1}(-q^{k})w(-q^{k})}{z+q^{k}} \right) \\
&&\,+ \int_{q^{-1}}^\infty \frac{Q_{n-1}(\pm s)w(\pm s)}{(z\mp s)} d_qs, \\
&=& \int^{\infty}_{-\infty} \frac{Q_{n-1}(x)w(x)}{z-x} d_{q}x \,,
\end{eqnarray*}
where the sum to infinity is well defined on $\Gamma$, as $h_q(s)$ converges as $k\rightarrow \infty$, and the Jackson integral of an analytic function is well defined. Using the geometric series with remainder
\begin{equation}\label{exp}
    \frac{1}{z-x} = \sum^{l}_{k=0}\left( \frac{x^{k}}{z^{k+1}} \right) + \frac{x^{l+1}}{z^{l+1}(z-x)}\,, \quad \text{for}\,x\neq z \,,
\end{equation}
we find
\begin{eqnarray*}
Y_{(2,2)}(z) &=& \int^{\infty}_{-\infty} \frac{Q_{n-1}(x)w(x)x^n}{z^{n}(z-x)} d_{q}x \\
&& + \sum^{n-1}_{k=0} \frac{1}{z^{k+1}}\int^{\infty}_{-\infty} Q_{n-1}(x)w(x)x^k d_{q}x  \,.
\end{eqnarray*} 
Note that the asymptotic condition, Equation \eqref{Yinfty decay}, holds when the last term on the RHS is zero for $k = 0,1,2,...,n-2$. This is true iff 
\[ \int^{\infty}_{-\infty} Q_{n-1}(x)w(x)x^{k} d_{q}x  = 0\,, \quad \text{for} \, k\leq n-2 \,, \]
which is satisfied when $Q_{n-1}$ is an orthogonal polynomial of degree $n-1$ on the $q$-lattice with respect to the weight $w(x)$. This is the class of $qF_{II}$ polynomials. We conclude that the solution of $Y_{(2,2)}(z)$ is given by
\[
    Y_{(2,2)}(z) = \left\{\begin{array}{lr}
       \frac{1}{\gamma_{n-1}}\int^{\infty}_{-\infty} \frac{P_{n-1}(x)w(x)}{z-x} d_{q}x -\frac{1}{\gamma_{n-1}} P_{n-1}(z)w(z)h_q(z), & \text{for } z \in \text{int}(\Gamma),\\
        \frac{1}{\gamma_{n-1}}\int^{\infty}_{-\infty} \frac{P_{n-1}(x)w(x)}{z-x} d_{q}x, & \text{for } z \in \text{ext}(\Gamma).
        \end{array}\right.
\]
After appropriate scaling, and repeating the same arguments for the first row, it follows that Equation \eqref{RHP sol} is a solution of the $q$-RHP given by Definition \ref{quartic RHP}.\\ 

Uniqueness of this solution follows from consideration of the determinant. Observe that the jump matrix $J = Y_{-}^{-1}Y_{+}$ satisfies $\text{det}(J) = 1$. It immediately follows that $\text{det}(Y^{+}) = \text{det}(Y^{-})$ on $\Gamma$. Furthermore, by the residue condition, Equation \eqref{res cond}, $\text{det}(Y)$ has no poles. Thus, $\text{det}(Y)$ is an entire function. By the asymptotic condition, Equation \eqref{Yinfty decay}, $\text{det}(Y) \rightarrow 1$, and so by Liouville's theorem, it follows that $\text{det}(Y) = 1$. This implies $Y^{-1}$ exists and is meromorphic in $\mathbb{C}/\Gamma$. 

Now suppose that there exists a second solution of the $q$-RHP and denote this solution by $\widehat{Y}$. If we define $M = \widehat{Y}Y^{-1}$, it follows that the jump conditions (and residue conditions) effectively cancel and $M_{+} = M_{-}$. Thus, $M$ is entire and $M \rightarrow I$ as $z\rightarrow \infty$. Hence, by Liouville's theorem $M = I$. We conclude that $\widehat{Y} = Y$ and, therefore, there is a single unique solution of the $q$-RHP.
\end{proof}

\begin{remark}\label{Lax remark}
It can be shown that solution of the RHP given by Definition \ref{quartic RHP} satisfies a Lax pair, namely a $q$-difference equation in $z$ and a recurrence relation for the parameter $n$. The proof follows using the same arguments as in \cite{qRHP}.
\end{remark}

\begin{remark}\label{general c remark}
One can repeat the arguments above to show that there is unique solution of Definition \ref{quartic RHP} with a modified function 
\[ h_q(z) \to c^{-1}h_q(z/c) ,\]
for some real constant $c$, and that it is given by
\begin{gather} \nonumber
Y^{(c)}_n(z) 
=
\begin{bmatrix}
   P^{(c)}_{n}(z) &
   \oint_{\Gamma}\frac{P^{(c)}_{n}(s)w(s)h_q(s/c)}{2c\pi i (z-s)}ds + \int_{q^{-1}}^\infty \frac{P^{(c)}_n(\pm cs)w(\pm cs)}{z\mp cs} d_qs \\
   \gamma_{n-1}^{-1} P^{(c)}_{n-1}(z) &
   \oint_{\Gamma}\frac{P^{(c)}_{n-1}(s)w(s)h_q(s/c)}{ 2c\pi i (z-s)\gamma^{(c)}_{n-1}}ds + \int_{q^{-1}}^\infty \frac{P^{(c)}_{n-1}(\pm cs)w(\pm cs)}{(z\mp cs)\gamma^{(c)}_{n-1}} d_qs
   \end{bmatrix},
\end{gather}
where $\{P^{(c)}_n(z)\}_{n=0}^\infty$ satisfies the orthogonality condition
\begin{equation*}
    \int_{-\infty}^\infty P^{(c)}_n(cs)P^{(c)}_m(cs)w(cs) d_qs = \gamma^{(c)}_n\delta_{n,m}.
\end{equation*}

\end{remark}

\section{Transformations of RHP}\label{trans section}
In this section we will transform the RHP given in Definition \ref{quartic RHP} to more easily determine the asymptotics of $Y_n(z)$ as $n\to\infty$. Throughout this section we will assume that $n$ is even. First, we introduce some new functions which will be used when transforming the RHP. We show that these functions satisfy certain difference equations. 
\begin{definition}
Define $g:\mathbb{C}\setminus \{0\} \to \mathbb{C}$ as
\begin{equation}\label{g def}
    g(z) = (z^2;q^2)_\infty (q^2z^{-2};q^2)_\infty.
\end{equation}
\end{definition}
\begin{lemma}\label{g diff lemma} 
$g(z)$ satisfies the difference equation
\begin{equation} \label{g diff}
g(qz) = -z^{-2}g(z) .
\end{equation}
\end{lemma}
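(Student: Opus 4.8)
The plan is to verify the functional equation \eqref{g diff} by directly manipulating the infinite products defining $g$. Recall $g(z) = (z^2;q^2)_\infty\,(q^2 z^{-2};q^2)_\infty$, so that
\[
g(qz) = (q^2 z^2;q^2)_\infty\,(z^{-2};q^2)_\infty.
\]
First I would use the elementary Pochhammer identity $(x;q^2)_\infty = (1-x)(q^2 x;q^2)_\infty$ in two places. Applied to $x = z^2$ it gives $(z^2;q^2)_\infty = (1-z^2)(q^2 z^2;q^2)_\infty$, hence $(q^2 z^2;q^2)_\infty = (z^2;q^2)_\infty/(1-z^2)$. Applied to $x = z^{-2}$ it gives $(z^{-2};q^2)_\infty = (1-z^{-2})(q^2 z^{-2};q^2)_\infty$.

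Substituting both into the expression for $g(qz)$ yields
\[
g(qz) = \frac{(z^2;q^2)_\infty}{1-z^2}\,(1-z^{-2})(q^2 z^{-2};q^2)_\infty
= \frac{1-z^{-2}}{1-z^2}\,g(z).
\]
It then remains to simplify the scalar prefactor: $\dfrac{1-z^{-2}}{1-z^2} = \dfrac{z^{-2}(z^2-1)}{1-z^2} = -z^{-2}$, which gives $g(qz) = -z^{-2} g(z)$ as claimed. I would also note in passing that the products converge for all $z \in \mathbb{C}\setminus\{0\}$ since $0 < q < 1$, so all the rearrangements are legitimate.

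There is essentially no serious obstacle here; the only thing to be careful about is bookkeeping the shift in the second Pochhammer factor — that is, recognizing that shifting $z \to qz$ turns $(q^2 z^{-2};q^2)_\infty$ into $(z^{-2};q^2)_\infty$, i.e. it \emph{removes} the leading factor rather than adding one, which is why that factor contributes $1 - z^{-2}$ to the numerator. Matching this against the factor $1-z^2$ that appears in the denominator from the first product is what produces the clean monomial $-z^{-2}$. An alternative, equally short route would be to take the $q$-logarithmic derivative or to write $g$ in terms of a theta function and invoke its known quasi-periodicity, but the direct product manipulation above is the most self-contained and is what I would present.
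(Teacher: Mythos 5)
Your proof is correct and follows essentially the same route as the paper: a direct manipulation of the two Pochhammer products, shifting $z\to qz$, pulling out the factors $1-z^2$ and $1-z^{-2}$, and simplifying the ratio to $-z^{-2}$. No issues.
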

\begin{proof}
Substituting $qz$ into Equation \eqref{g def} we find
\begin{eqnarray*}
g(qz) &=& (q^2z^2;q^2)_\infty (z^{-2};q^2)_\infty ,\\
    &=& \prod_{j=0}^\infty (1-q^2z^2q^{2j}) \prod_{j=0}^\infty (1-z^{-2}q^{2j}), \\
    &=& \frac{1-z^{-2}}{1-z^2} \prod_{j=0}^\infty (1-z^2q^{2j}) \prod_{j=0}^\infty (1-q^2z^{-2}q^{2j}) ,\\
    &=& \frac{1-z^{-2}}{1-z^2} g(z), \\
    &=& -z^{-2} g(z) .
\end{eqnarray*}
\end{proof}
\begin{remark}
By induction using Equation \eqref{g diff} we find that for even $n$
\begin{equation}\label{g diff induction}
    g(q^{-n/2}z) = (-1)^{n/2}q^{-n^2/4-n/2}z^n g(z).
\end{equation}
\end{remark}
\begin{definition}
Define $\omega:\mathbb{C}\setminus (\{0\}\cup \{e^{\frac{i\pi(1+2n)}{4}} q^k\}_{k=-\infty}^\infty) \to \mathbb{C}$, where $n = 0,1,2,3$, as
\begin{equation}\label{omega def}
    \omega(z) = 1/(-z^4,-q^{4}z^{-4};q^4)_\infty.
\end{equation}
\end{definition}

\begin{lemma}\label{omega diff lemma}
$\omega(z)$ satisfies the difference equation
\begin{equation} \label{omega diff}
\omega(qz) = z^{4}\omega(z) .
\end{equation}
\end{lemma}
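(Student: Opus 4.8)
The plan is to mirror the proof of Lemma~\ref{g diff lemma}: expand $1/\omega$ as a product of two convergent infinite products, substitute $qz$, and observe that the substitution merely reindexes each product, leaving a single surplus factor that telescopes to the desired power of $z$. Concretely, I would start from
\[
\frac{1}{\omega(z)} = (-z^4;q^4)_\infty\,(-q^4z^{-4};q^4)_\infty
= \prod_{j=0}^\infty\bigl(1+z^4q^{4j}\bigr)\prod_{j=0}^\infty\bigl(1+z^{-4}q^{4(j+1)}\bigr),
\]
where in the second product I have absorbed the built-in $q^4$ into the running exponent.

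Next I would substitute $qz$, using $(qz)^4=q^4z^4$ and $(qz)^{-4}=q^{-4}z^{-4}$, to obtain
\[
\frac{1}{\omega(qz)} = (-q^4z^4;q^4)_\infty\,(-z^{-4};q^4)_\infty
= \prod_{j=0}^\infty\bigl(1+z^4q^{4(j+1)}\bigr)\prod_{j=0}^\infty\bigl(1+z^{-4}q^{4j}\bigr).
\]
Comparing factor by factor, the first product in $1/\omega(z)$ has exactly the extra factor $1+z^4$ (the $j=0$ term) relative to the first product in $1/\omega(qz)$, while the second product in $1/\omega(qz)$ has exactly the extra factor $1+z^{-4}$ relative to that in $1/\omega(z)$. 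Dividing and cancelling the common (index $\ge 1$) factors therefore gives
\[
\frac{\omega(z)}{\omega(qz)} = \frac{1+z^{-4}}{1+z^4} = \frac{z^{-4}\bigl(z^4+1\bigr)}{1+z^4} = z^{-4},
\]
which rearranges to $\omega(qz)=z^4\omega(z)$, as claimed.

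I expect no genuine obstacle here; the argument is a routine telescoping identity of the same flavour as Lemma~\ref{g diff lemma}. The only point needing a little care is the $q^4$-offset bookkeeping — recognising that $(-q^4z^{-4};q^4)_\infty$ is the $j\mapsto j+1$ reindexing of $(-z^{-4};q^4)_\infty$ — which is precisely what produces the one surplus factor in the quotient. For completeness one should also note that $\omega$ is holomorphic and nonvanishing on $\mathbb{C}\setminus(\{0\}\cup\{e^{i\pi(1+2n)/4}q^k\}_{k=-\infty}^\infty)$, so that both infinite products converge absolutely there and the factorwise manipulations above are legitimate.
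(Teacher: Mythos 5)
Your proof is correct and is exactly the argument the paper intends: the paper's own proof simply says to repeat the product-reindexing computation of Lemma~\ref{g diff lemma}, which is precisely your telescoping of the two $q^4$-Pochhammer products, yielding the surplus factors $(1+z^4)$ and $(1+z^{-4})$ and hence $\omega(qz)=z^4\omega(z)$.
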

\begin{proof}
The proof follows from definition of $\omega(z)$ (by applying the same arguments as in Lemma \ref{g diff lemma}).
\end{proof}

\begin{lemma}
The function $w(z)$ defined in Equation \eqref{w def} satisfies the difference equation
\begin{equation}\label{w diff}
     w(qz) = (1+z^4)w(z).
\end{equation}
\end{lemma}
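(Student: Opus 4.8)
The plan is to prove the $q$-difference equation $w(qz) = (1+z^4)w(z)$ directly from the definition $w(z) = u_4(z) = 1/(-z^4,q^4)_\infty$ by manipulating the Pochhammer symbol, exactly in the spirit of the proof of Lemma \ref{g diff lemma}. Writing everything out as infinite products, I would substitute $qz$ for $z$ and extract the factor that changes.

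First I would recall that $(-z^4;q^4)_\infty = \prod_{j=0}^\infty (1 + z^4 q^{4j})$, so that
\[
w(z) = \frac{1}{(-z^4;q^4)_\infty} = \prod_{j=0}^\infty \frac{1}{1 + z^4 q^{4j}}.
\]
Then I would compute
\[
(-(qz)^4;q^4)_\infty = (-q^4 z^4;q^4)_\infty = \prod_{j=0}^\infty (1 + z^4 q^{4(j+1)}) = \prod_{j=1}^\infty (1 + z^4 q^{4j}),
\]
which is obtained from $(-z^4;q^4)_\infty = \prod_{j=0}^\infty (1 + z^4 q^{4j})$ by deleting the $j=0$ factor, namely $(1 + z^4)$. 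Hence $(-z^4;q^4)_\infty = (1+z^4)\,(-q^4 z^4;q^4)_\infty$, and taking reciprocals gives
\[
w(qz) = \frac{1}{(-q^4 z^4;q^4)_\infty} = \frac{1+z^4}{(-z^4;q^4)_\infty} = (1+z^4) w(z),
\]
which is the claimed identity. Alternatively, one notes this is simply the $m=2$ case of Equation \eqref{general qdiff}: rewriting $D_q u_m(x) = \frac{-x^{2m-1}}{1-q} u_m(x)$ via the definition \eqref{q_derivative} of the $q$-derivative yields $u_m(qx) - u_m(x) = x(q-1)\cdot\frac{-x^{2m-1}}{1-q}u_m(x) = x^{2m} u_m(x)$, i.e. $u_m(qx) = (1+x^{2m})u_m(x)$; setting $m=2$ recovers \eqref{w diff}.

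There is essentially no obstacle here: the argument is a one-line telescoping of the Pochhammer product, and the only thing to be careful about is the bookkeeping of indices (that the shift $z \mapsto qz$ raises the exponent of $q$ by $4$, so deleting exactly the $j=0$ term accounts for the discrepancy). I would present the proof in the first form above, as a short displayed computation mirroring Lemma \ref{g diff lemma}, since it keeps the paper self-contained and does not rely on rearranging the $q$-derivative formula.
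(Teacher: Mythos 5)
Your proof is correct and follows essentially the same route as the paper, which simply notes that the identity follows from the definition of $w(z)$ by the same telescoping-product argument used for Lemma \ref{g diff lemma}; your displayed computation makes that argument explicit. The alternative remark via Equation \eqref{general qdiff} is also consistent, but the direct Pochhammer manipulation is all that is needed.
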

\begin{proof}
The proof follows from definition of $w(z)$ (by applying the same arguments as in Lemma \ref{g diff lemma}).
\end{proof}
\begin{remark}
By induction using Equation \eqref{w diff} we find that for even $n$
\begin{equation}\label{w diff induction}
    w(q^{-n/2}z) = z^{-2n}q^{n(\frac{n}{2}+1)}(-q^{2n+4}z^{-4};q^4)_\infty\omega(z).
\end{equation}
Taking $z\to zq^{n/2}$, Equation \eqref{w diff induction} gives
\begin{equation}\label{w omega relation}
    w(z) = z^{-2n}q^{n(-\frac{n}{2}+1)}(-z^{-4}q^4;q^4)_\infty\omega(zq^{n/2}).
\end{equation}
\end{remark}

\subsection{RHP transformations}\label{RHP transformations}
Before proceeding we introduce some notation. Consider the contour $\Gamma$, scaled such that the modulus of points on it are multiplied by $q^{-n/2}$. We denote this new contour by $\Gamma_{q^{-n/2}}$. (If $\Gamma$ were the unit circle, $\Gamma_{q^{-n/2}}$ would be a circle with radius $q^{-n/2}$.) Consider the following transformation to the RHP given by Definition \ref{quartic RHP}:

 \begin{align} \nonumber
     U_n(z)= \left\{\begin{array}{lr}
     Y_n(z), &\text{for}\,z\in\mathcal{D}_-, \\
     Y_{n}(z)
      \begin{bmatrix}
       g(z)^{-1} &
       0\\
       0 &
       g(z)
       \end{bmatrix},  &\text{for}\,z\in\mathcal{D}_{-,q^{-n/2}}\setminus\mathcal{D}_{-},\\
    Y_{n}(z) \begin{bmatrix}
       z^{-n} &
       0 \\
       0 &
       z^n
       \end{bmatrix}, &\text{for}\,z\in\mathcal{D}_{+,q^{-n/2}}.
    \end{array}\right.
    \end{align}
    
This gives a new RHP for $U_n$ with two jumps (and also some more poles which will be discussed shortly). At $z \in \Gamma_{q^{-n/2}}$ we apply Equation \eqref{g diff induction} to determine
\begin{equation}\label{eq 3.9}
 g(q^{-n/2}e^{i\theta})(q^{-n/2} e^{i\theta})^{-n} = g(e^{i\theta}) (-1)^{n/2}q^{n^2/4-n/2}.
\end{equation}  
Motivated by Equation \eqref{eq 3.9} we make the transformation
\begin{align}\label{wnz def} 
 W_n(z)
 = \left\{\begin{array}{lr}
 \begin{bmatrix}
   c_n^{n} &
   0 \\
   0 &
   c_n^{-n} 
   \end{bmatrix}
 U_n(z)
  \begin{bmatrix}
   c_n^{-n} &
   0 \\
   0 &
   c_n^n
   \end{bmatrix}, &\text{for}\,z\in\mathcal{D}_{+,q^{-n/2}}, \\
 \begin{bmatrix}
   c_n^{n} &
   0 \\
   0 &
   c_n^{-n} 
   \end{bmatrix}
  U_n(z),\qquad &\text{for}\, z\in\mathcal{D}_{-,q^{-n/2}},
  \end{array}\right.
\end{align}
where 
\[ c_n = (-1)^{1/2}q^{n/4-1/2}.\]
This new matrix $W_n(z)$ satisfies the following RHP:
\begin{definition}[$W_n$-RHP]\label{W_n RHP}
Let $\Gamma$ be an appropriate curve (see Definition \ref{admissable}) with interior $\mathcal D_-$ and exterior $\mathcal D_+$. A $2\times 2$ complex matrix function $W_n(z)$, $z\in\mathbb C$, is a solution of the $W_n$ RHP if it satisfies the following conditions:
\begin{enumerate}[label={{\rm (\roman *)}}]
\begin{subequations}
\item $W_n(z)$ is meromorphic in $\mathbb{C}\setminus( \Gamma\bigcup \Gamma_{q^{-n/2}})$,  with simple poles at $z=\pm q^{-k}$ for $1\le k\in\mathbb{N}$.
\item $W_n(z)$ has continuous boundary values $W_n^-(s)$ and $W_n^+(s)$ as $z$ approaches $s\in\Gamma$ from $\mathcal D_{-}$ and $\mathcal D_{+}$ respectively, where 
    \begin{gather} \label{first jump}
      W_n^+(s)
     =
      W_n^-(s)
      \begin{bmatrix}
        g(s)^{-1} &
        g(s)w(s)h_q(s)  \\
        0 &
        g(s)
        \end{bmatrix}, \; s\in\Gamma.
    \end{gather}
    
\item $W_n(z)$ has continuous boundary values $W_n^-(s)$ and $W_n^+(s)$ as $z$ approaches $s\in\Gamma_{q^{-n/2}}$ from $\mathcal D_{-,q^{-n/2}}$ and $\mathcal D_{+,q^{-n/2}}$ respectively, where 
    \begin{gather} \label{second jump}
      W_n^+(s)
     =
      W_n^-(s)
      \begin{bmatrix}
        g(sq^{n/2}) &
        0  \\
        0 &
        g(sq^{n/2})^{-1}
        \end{bmatrix}, \; s\in\Gamma_{q^{-n/2}}.
    \end{gather}

\item $W_n(z)$ satisfies
    \begin{equation}
    W_n(z) = I + O\left( \frac{1}{|z|} \right), \text{ as $|z| \rightarrow \infty$} .
    \end{equation} 
    Note that $W_n(\pm q^{-k})$ has poles in the second column for $1\le k\in \mathbb{N}$. Thus, the decay condition does not hold near these poles. For example: the decay condition holds for $z$ such that $|z \pm q^{-k}| > r$, for all integer $k>n/2$, for fixed $r>0$.
    
\item The residue at the poles $z=\pm q^{-k}$ for $1 \leq k \leq n/2$ is given by
    \begin{gather} 
    \text{Res}(W_n(\pm q^{-k}))
     =
      \lim_{z \rightarrow \pm q^{-k}} W_n(z)
      \begin{bmatrix}
       0 &
       0  \\
       (z\mp q^{-k})g(z)^{-2}h_q(z)^{-1}w(z)^{-1} &
       0
       \end{bmatrix}.
    \end{gather}
    
\item The residue at the poles $z=\pm q^{-k}$ for $k>n/2$ is given by
    \begin{gather} \label{wn res}
    \text{Res}(W_n(\pm q^{-k}))
     =
      \lim_{z \rightarrow \pm q^{-k}} W_n(z)
      \begin{bmatrix}
       0 &
       (z\mp q^{-k})z^{2n}h_q(z)w(z)c_n^{2n}  \\
       0 &
       0
       \end{bmatrix}.
    \end{gather}

\end{subequations} 
\end{enumerate} 
\end{definition}

\section{Near-field RHP}\label{near-field rhp section}
We will show that the solution $W_n(z)$ of the RHP given in Definition \ref{W_n RHP} approaches a limiting solution $G(z)$. To do this, we are going to solve two separate RHPs, which we will call the near-field RHP and the far-field RHP. These RHPs will be chosen to mimic the two jump conditions satisfied by $W_n(z)$ at $\Gamma$ and $\Gamma_{q^{-n/2}}$ respectively. This section is devoted to the solution of the near-field RHP.

Motivated by the form of Equation \eqref{first jump}, we first introduce the following RHP.

\begin{definition}[$\mathfrak{W}$-RHP]\label{near model RHP}
Let $\Gamma$ be an appropriate curve (see Definition \ref{admissable}) with interior $\mathcal D_-$ and exterior $\mathcal D_+$. A $2\times 2$ complex matrix function $\mathfrak{W}(z)$, $z\in\mathbb C$, is a solution of the $\mathfrak{W}$-RHP if it satisfies the following conditions:
\begin{enumerate}[label={{\rm (\roman *)}}]
\begin{subequations}
\item $\mathfrak{W}(z)$ is meromorphic in $\mathbb{C}\setminus \Gamma$,  with simple poles in the first column at $z=\pm q^{-k}$ for $1\le k\in \mathbb{N}$.
\item $\mathfrak{W}(z)$ has continuous boundary values $\mathfrak{W}^-(s)$ and $\mathfrak{W}^+(s)$ as $z$ approaches $s\in\Gamma$ from $\mathcal D_-$ and $\mathcal D_+$, and 
\begin{gather} \label{eq:gwh}
  \mathfrak{W}^+(s)
 =
  \mathfrak{W}^-(s)
  \begin{bmatrix}
    g(s)^{-1} &
    g(s)w(s)h_q(s)  \\
    0 &
    g(s)
    \end{bmatrix}, \; s\in\Gamma,
\end{gather}
where $w(s)$ and $g(s)$ are defined in Equations \eqref{w def} and \eqref{g def} respectively.
    
\item $\mathfrak{W}(z)$ satisfies
    \begin{gather} 
    \mathfrak{W}(z) = 
      \begin{bmatrix}
       1 &
       0  \\
       c_0h_q(z)&
       1
       \end{bmatrix} + O\left(\frac{1}{z}\right),
    \end{gather}
    where $c_0$ is a non-zero constant. Due to the simple poles in the first column of $\mathfrak{W}(z)$, the asymptotic decay condition only holds for $|z\pm q^{-k}| > q^{-k/R}$ for any $R>0$.
    
\item The residue at the poles $z=\pm q^{-k}$ for $1\le k\in \mathbb{N}$ is given by
    \begin{gather} 
    \text{Res}(\mathfrak{W}(\pm q^{-k}))
     =
      \lim_{z \rightarrow \pm q^{-k}} \mathfrak{W}(z)
      \begin{bmatrix}
       0 &
       0  \\
       (z\mp q^{-k})g(z)^{-2} h_q(z)^{-1}w(z)^{-1} &
       0
       \end{bmatrix}.
    \end{gather}
\end{subequations} 
\end{enumerate} 
\end{definition}
To solve this RHP, a series of Lemmas are required.
\begin{lemma}\label{L01}
Consider the difference equation
\begin{equation}\label{near 0 q diff}
    y(q^{-2}z) + \bigl(q^{-3}z^2(1+q^{-1})-(1+q^{-1})\bigr)y(q^{-1}z) + q^{-1}(1+q^{-4}z^4)y(z)=0.
\end{equation}
There exists two entire solutions of Equation \eqref{near 0 q diff}, one even and one odd. 
\end{lemma}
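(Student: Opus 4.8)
The plan is to construct the two solutions as power series about $z=0$ and show that the recursion for the coefficients is non-degenerate and produces entire functions. First I would seek a formal solution $y(z)=\sum_{j\ge 0} a_j z^j$. Substituting into Equation \eqref{near 0 q diff} and using $y(q^{-\ell}z)=\sum_j a_j q^{-\ell j} z^j$, the $z^0$ and $z^1$ terms decouple from the rest because the coefficient $q^{-3}z^2(1+q^{-1})$ and the term $q^{-5}z^4$ each raise the degree. Collecting the coefficient of $z^j$ gives a recurrence of the shape
\[
\bigl(q^{-2j} - (1+q^{-1})q^{-j} + q^{-1}\bigr)a_j
= -\,(1+q^{-1})q^{-3}q^{-(j-2)}a_{j-2} - q^{-5}q^{-(j-4)}a_{j-4},
\]
where $a_{-1}=a_{-2}=a_{-3}=a_{-4}=0$. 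The bracket on the left factors as $(q^{-j}-1)(q^{-j}-q^{-1})$, which vanishes only at $j=0$ and $j=1$. Hence $a_0$ and $a_1$ are free, every $a_j$ with $j\ge 2$ is uniquely determined by $a_{j-2}$ and $a_{j-4}$, and the recursion splits into an even chain (determined by $a_0$, with all odd coefficients zero) and an odd chain (determined by $a_1$, with all even coefficients zero). This yields exactly one even and one odd solution up to scaling, matching the claim.

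Next I would verify that these formal series have infinite radius of convergence, so that the solutions are entire. From the recurrence, for large $j$ the left bracket behaves like $q^{-2j}$ while the right-hand side carries factors $q^{-(j-2)}$ and $q^{-(j-4)}$ against $a_{j-2}$, $a_{j-4}$; dividing through shows $a_j = O(q^{j})\bigl(|a_{j-2}|+|a_{j-4}|\bigr)$ up to a fixed constant, uniformly in $j$ once $j$ is large. An elementary induction then gives a bound of the form $|a_j|\le C\,q^{c j^2}$ for some $c>0$ (the quadratic gain coming from the accumulating $q^j$ factors), from which $\limsup_j |a_j|^{1/j}=0$ and the series converges for all $z\in\mathbb{C}$. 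Since $0<q<1$, these estimates are clean. The two series therefore define entire functions solving Equation \eqref{near 0 q diff}, one even and one odd by construction.

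The main obstacle is not the convergence estimate, which is routine for $q$-difference equations of this type, but rather making the bookkeeping of the indicial analysis fully rigorous: one must check that the $z^0$ and $z^1$ equations are automatically satisfied (they are, since the only surviving contributions at those orders come from $y(q^{-2}z)$, $y(q^{-1}z)$, $y(z)$ with coefficients summing to the vanishing bracket), confirm that no resonance occurs at any $j\ge 2$ (guaranteed by $q\notin\{q^j : j\ge 2\}\cup\{1\}$, i.e. by $0<q<1$), and track the index shifts by $2$ and $4$ carefully so that the even/odd decoupling is genuine. Once these are in place, uniqueness of the even and odd solutions up to a constant follows because any entire solution expands in a power series whose coefficients must satisfy the same recurrence, hence is a linear combination of the two constructed solutions. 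I would close by recording that this two-dimensional solution space, split into its even and odd parts, is precisely what is needed for the near-field RHP in Definition \ref{near model RHP}.
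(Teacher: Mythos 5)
Your proposal follows essentially the same route as the paper: substitute a power series $\sum_j a_j z^j$, obtain a four-term coefficient recurrence whose indicial factor $(q^{-j}-1)(q^{-j}-q^{-1})$ vanishes only at $j=0,1$, so that the even chain (seeded by $a_0$) and odd chain (seeded by $a_1$) decouple and the resulting series are entire — this is precisely the paper's argument, with your convergence estimate filling in the step the paper leaves implicit. One small bookkeeping slip: the $a_{j-4}$ term in your displayed recurrence should carry only the constant factor $q^{-5}$ (it comes from $q^{-1}\cdot q^{-4}z^4\,y(z)$, which involves no $q$-shift of the argument), not $q^{-5}q^{-(j-4)}$; this does not affect the indicial analysis, the even/odd decoupling, or the entireness conclusion.
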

\begin{proof}
Let 
\[y(z) = \sum_{i=0}^\infty y_iz^i .\]
Substituting this into Equation \eqref{near 0 q diff} and comparing coefficients of $z$, we find that $y(z)$ is a solution iff
\[y_i = -\frac{(1+q^{-1})q^{-i}y_{i-2} + q^{-4}y_{i-4}}{q^{-2i+1}-(1+q)q^{-i}+1} .\]
Lemma \ref{L01} follows immediately.
\end{proof}
\begin{definition}
Following from Lemma \ref{L01} we define $a(z)$ as the even entire solution to Equation \eqref{near 0 q diff} normalised such that its first non-zero Taylor series coefficient $(\mathrm{at}\,O(1))$ is unity. Similarly, we define $b(z)$ as the odd entire solution normalised such that its first non-zero Taylor series coefficient $(\mathrm{at}\,O(z))$ is unity.
%Let us label these two solutions as $a(z)$ and $b(z)$ respectively (normalised such that their first non-zero Taylor series coefficient is unity).
\end{definition}

Motivated by the $(1,2)$ entry of the right side of Equation \eqref{eq:gwh}, we consider the properties of the product $y(z)g(z)w(z)$.
\begin{lemma}\label{v lemma}
Define 
\[ v(z) = y(z)g(z)w(z) ,\]
then $v(z)$ is a solution of the difference equation
\begin{equation}\label{v diff}
    (q^6z^{-4}+q^{-2})v(q^{-2}z) + (z^{-2}q(1+q)-q^{-1}(1+q^{-1}))v(q^{-1}z) + q^{-1}v(z) = 0.
\end{equation}
Furthermore, there exists two solutions to Equation \eqref{v diff} analytic in $\mathbb{C}\setminus\{0\}$ which can be represented by an even and odd power series at infinity. %Let us label these two solutions as $\phi_{even}(z)$ and $\phi_{odd}(z)$ respectively (again normalised such that their first non-zero power series coefficient is unity).
\end{lemma}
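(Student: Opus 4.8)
The plan is to treat the two assertions in turn: first that $v(z)=y(z)g(z)w(z)$ (for $y$ any solution of Equation \eqref{near 0 q diff}, which exists by Lemma \ref{L01}) satisfies Equation \eqref{v diff}, and then that Equation \eqref{v diff} admits the two claimed solutions analytic on $\mathbb{C}\setminus\{0\}$.

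For the first assertion I would simply push the known functional equations for $g$ and $w$ through Equation \eqref{near 0 q diff}. Iterating Lemma \ref{g diff lemma} gives $g(q^{-1}z)=-q^{-2}z^{2}g(z)$ and $g(q^{-2}z)=q^{-6}z^{4}g(z)$, and iterating Equation \eqref{w diff} gives $w(q^{-1}z)=w(z)/(1+q^{-4}z^{4})$ together with $w(q^{-2}z)=w(z)/\bigl((1+q^{-4}z^{4})(1+q^{-8}z^{4})\bigr)$. Substituting $y(q^{-j}z)=v(q^{-j}z)/\bigl(g(q^{-j}z)w(q^{-j}z)\bigr)$ for $j=0,1,2$ into Equation \eqref{near 0 q diff}, multiplying through by $g(z)w(z)$, and cancelling the common factor $1+q^{-4}z^{4}$, the coefficients collapse exactly to those appearing in Equation \eqref{v diff}. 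This identity is valid on the open set where the relevant factors are nonzero and then extends to all of $\mathbb{C}\setminus\{0\}$ by analytic continuation; the only point that needs a remark is the legitimacy of dividing by $1+q^{-4}z^{4}$, which continuation handles.

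For the second assertion I would look for a solution of the form $v(z)=\sum_{i\ge 0}v_{i}z^{-i}$. Substituting into Equation \eqref{v diff} and matching coefficients of $z^{-i}$ produces a recurrence relating $v_{i}$ to $v_{i-2}$ and $v_{i-4}$ in which the coefficient of $v_{i}$ factors as $q^{-2}(q^{i}-1)(q^{i}-q)$. Since $0<q<1$, this vanishes precisely for $i=0$ and $i=1$, so $v_{0}$ and $v_{1}$ are free parameters, while for $i\ge 2$
\[ v_{i}=-\frac{q^{2i}v_{i-4}+q^{i+1}(1+q)v_{i-2}}{(q^{i}-1)(q^{i}-q)}, \]
with the convention $v_{j}=0$ for $j<0$. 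Because only even-index (resp.\ odd-index) coefficients couple, the choice $v_{1}=0$ gives a formal solution in even powers of $z^{-1}$ and the choice $v_{0}=0$ one in odd powers of $z^{-1}$; these are the required even and odd solutions.

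It remains to see that these formal series are genuine analytic solutions on $\mathbb{C}\setminus\{0\}$. As $i\to\infty$ the denominator $(q^{i}-1)(q^{i}-q)\to 1$ and the numerator is dominated by the term $q^{i+1}(1+q)v_{i-2}$, so $|v_{i}|\le Cq^{i}|v_{i-2}|$ for all large $i$; iterating yields super-geometric decay of the form $|v_{2k}|\le C^{k}q^{k^{2}}|v_{0}|$ (and similarly for the odd series), so $\limsup_{i\to\infty}|v_{i}|^{1/i}=0$ and the series converges locally uniformly on $\mathbb{C}\setminus\{0\}$. Consequently each series defines a function analytic on $\mathbb{C}\setminus\{0\}$, and the termwise computation above upgrades to an honest identity. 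The main obstacle is precisely this convergence step: one has to exploit $0<q<1$ to see that the recurrence forces the coefficients to decay (rather than grow), which is what turns the formal power series into honest solutions on the whole punctured plane.
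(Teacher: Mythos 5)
Your proposal is correct and takes essentially the same route as the paper: substitute $y(q^{-j}z)=v(q^{-j}z)/\bigl(g(q^{-j}z)w(q^{-j}z)\bigr)$ using the functional equations for $g$ and $w$ to obtain Equation \eqref{v diff}, then make a power-series ansatz in $z^{-1}$, which gives exactly the paper's coefficient recurrence and convergence on all of $\mathbb{C}\setminus\{0\}$. Two small touch-ups: the denominator $(q^i-1)(q^i-q)$ tends to $q$, not $1$, and instead of asserting that the $v_{i-2}$ term dominates the numerator (which is not justified a priori, e.g.\ if some $v_{i-2}$ vanished), bound $|v_i|\le Cq^i\max(|v_{i-2}|,|v_{i-4}|)$ for large $i$; the super-geometric decay of the coefficients and hence analyticity on the punctured plane follow just the same.
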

\begin{proof}
From the definition of $v(z)$ and Lemmas \ref{g diff} and \ref{w diff} we find that
\begin{eqnarray*}
    y(q^{-1}z) &=& \frac{v(q^{-1}z)}{g(q^{-1}z)w(q^{-1}z)},\\
    &=& -\frac{q^2z^{-2}(1+q^{-4}z^4)v(q^{-1}z)}{g(z)w(z)}.
\end{eqnarray*}
Substituting the above into Equation \eqref{near 0 q diff} we determine that $v(z)$ satisfies the difference equation
\begin{equation}\nonumber
    (q^6z^{-4}+q^{-2})v(q^{-2}z) - (q^{-1}(1+q^{-1})-(q^2+q)z^{-2})v(q^{-1}z) + q^{-1}v(z)=0.
\end{equation}
Let $v(z) = \sum_{i=0}^\infty v_iz^{-i}$, such a power series is a solution of Equation \eqref{v diff} iff
\begin{equation}\nonumber
    v_i = -\frac{q(1+q)q^{i}v_{i-2} + q^{2i}v_{i-4}}{q-(1+q)q^i + q^{2i}}.
\end{equation}
Lemma \ref{v lemma} follows immediately (note that the power series for $v(z)$ converges everywhere).
\end{proof}
\begin{definition}
Following from Lemma \ref{v lemma} we define $\phi_{even}(z)$ for $z\in\mathbb{C}\setminus\{0\}$, as the even solution to Equation \eqref{v diff} normalised such that its first non-zero series coefficient $(\mathrm{at}\,O(1))$ is unity. Similarly, we define $\phi_{odd}(z)$ as the odd solution normalised such that its first non-zero series coefficient $(\mathrm{at}\,O(z^{-1}))$ is unity.
\end{definition}
Motivated by the $(1,1)$ entry of the right side of Equation \eqref{eq:gwh} for $z\in\mathcal{D}_+$, we consider the properties of the product $y(z)g(z)^{-1}$.
\begin{lemma}\label{u lemma}
Define 
\[ u(z) = y(z)g(z)^{-1} ,\]
then $u(z)$ is a solution of the difference equation
\begin{equation}\label{u diff}
    q^{-5}u(q^{-2}z) + q^{-1}(z^{-2}(1+q^{-1})-q^{-3}(1+q^{-1}))u(q^{-1}z) + (z^{-4}+q^{-4})u(z) = 0.
\end{equation}
Furthermore, there exists two solutions to Equation \eqref{u diff} holomorphic for $|z|>q$ which can be represented by an even and odd power series at infinity. %Let us label these two solutions as $\varphi_{even}(z)$ and $\varphi_{odd}(z)$ respectively (normalised such that their first power series coefficient is unity).
\end{lemma}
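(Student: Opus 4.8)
The plan is to imitate the proof of Lemma~\ref{v lemma}. Since $u(z)=y(z)g(z)^{-1}$ is the product of the solution $y(z)$ of \eqref{near 0 q diff} with an explicit factor, I would first convert \eqref{near 0 q diff} into a $q$-difference equation for $u$ by eliminating $g$, and then analyse that equation with the same Frobenius-type power-series argument already used for Lemmas~\ref{L01} and~\ref{v lemma}.

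For the equation itself I iterate Lemma~\ref{g diff lemma}: replacing $z$ by $q^{-1}z$ in $g(qz)=-z^{-2}g(z)$ gives $g(q^{-1}z)=-q^{-2}z^{2}g(z)$, and applying this once more gives $g(q^{-2}z)=q^{-6}z^{4}g(z)$. Writing $y(q^{-k}z)=u(q^{-k}z)\,g(q^{-k}z)$ for $k=0,1,2$ and inserting this into \eqref{near 0 q diff}, every term picks up a common factor $g(z)$; cancelling it, dividing by $z^{4}$, and multiplying through by $q$ produces exactly \eqref{u diff}. This step is a direct computation.

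For the existence of the two power-series solutions I substitute $u(z)=\sum_{i\ge 0}u_i z^{-i}$ into \eqref{u diff} and equate coefficients of $z^{-i}$. The coefficient of $u_i$ equals $q^{-5}\bigl(q^{2i}-(1+q)q^{i}+q\bigr)=q^{-5}(q^{i}-q)(q^{i}-1)$, which vanishes precisely for $i\in\{0,1\}$ and is nonzero for every $i\ge 2$, yielding the recursion
\[ u_i=-\frac{q^{i+1}(1+q)\,u_{i-2}+q^{5}\,u_{i-4}}{(q^{i}-q)(q^{i}-1)} . \]
Hence $u_0$ and $u_1$ are free, and since the recursion couples only indices of equal parity, the choices $(u_0,u_1)=(1,0)$ and $(u_0,u_1)=(0,1)$ yield an even and an odd power-series solution. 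The only point that needs slight care --- and where $u$ genuinely differs from the function $v$ of Lemma~\ref{v lemma} --- is convergence: since $(q^{i}-q)(q^{i}-1)\to q$ while $q^{i+1}(1+q)\to 0$, the recursion forces $u_i\sim -q^{4}u_{i-4}$, so the coefficients decay only geometrically, and a routine comparison bound on $|u_i|\,q^{-i}$ gives $\limsup_{i\to\infty}|u_i|^{1/i}\le q$; hence both series converge and define holomorphic functions on $\{\,|z|>q\,\}$, rather than on all of $\mathbb{C}\setminus\{0\}$.
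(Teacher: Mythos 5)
Your proposal is correct and follows essentially the same route as the paper: eliminate $g$ using $g(q^{-1}z)=-q^{-2}z^{2}g(z)$ and $g(q^{-2}z)=q^{-6}z^{4}g(z)$ to obtain \eqref{u diff}, then analyse the power-series recursion, whose indicial factor $(q^{i}-q)(q^{i}-1)$ vanishes only at $i=0,1$ (giving the even and odd solutions), and use the dominant balance $u_i\sim -q^{4}u_{i-4}$ to conclude convergence for $|z|>q$, exactly as the paper does via its telescopic-product bound. The only discrepancy is cosmetic: your numerator coefficient $q^{i+1}(1+q)u_{i-2}$ is the correct one (the paper's Equation \eqref{u recur} prints $(1+q)q^{i+3}u_{i-2}$, apparently a typo), and this does not affect the asymptotics or the conclusion.
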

\begin{proof}
From the definition of $u(z)$ and Equation \eqref{g diff} we find that
\begin{eqnarray*}
    y(q^{-1}z) &=& u(q^{-1}z)g(q^{-1}z),\\
    &=& -q^{-2}z^{2}u(q^{-1}z)g(z).
\end{eqnarray*}
Substituting the above into Equation \eqref{near 0 q diff} we determine that $u(z)$ satisfies the difference equation
\begin{equation}\nonumber
    q^{-6}z^{4}u(q^{-2}z) - q^{-2}z^{2}(q^{-3}z^2(1+q^{-1})-(1+q^{-1}))u(q^{-1}z) + q^{-1}(1+q^{-4}z^4)u(z)=0,
\end{equation}
which one can readily show is equivalent to Equation \eqref{u diff}. Let $u(z) = \sum_{i=0}^\infty u_iz^{-i}$, such a power series is a solution of Equation \eqref{u diff} iff
\begin{equation}\label{u recur}
    u_i = -\frac{(1+q)q^{i+3}u_{i-2} + q^{5}u_{i-4}}{q-(1+q)q^i + q^{2i}}.
\end{equation}
For large index $i$, we can deduce from Equation \eqref{u recur} that
\begin{equation}\nonumber
    \mathrm{max}(|u_i|,|u_{i+2}|) = (q^4 + O(q^i))\mathrm{max}(|u_{i-4}|,|u_{i-2}|).
\end{equation}
Taking a telescopic product we conclude that the sum $\sum_{i=0}^\infty u_iz^{-i}$ converges if $|z|>q$. Lemma \ref{v lemma} follows immediately.
\end{proof}

\begin{definition}
Following from Lemma \ref{u lemma} we define $\varphi_{even}(z)$ as the even solution to Equation \eqref{u diff}, analytic in $|z|>q$, normalised such that its first non-zero series coefficient $(\mathrm{at}\,O(1))$ is unity. Similarly, we define $\varphi_{odd}(z)$ as the odd solution, analytic in $|z|>q$, normalised such that its first non-zero series coefficient $(\mathrm{at}\,O(z^{-1}))$ is unity.
\end{definition}

\begin{lemma}\label{a/g lemma}
The function $a(z)g(z)^{-1}$, where $a(z)$ is defined in Lemma \ref{L01}, can be written as
\[ a(z)g(z)^{-1} = \eta_1 h_q(z)\varphi_{odd}(z) + \eta_2 \varphi_{even}(z), \]
where $\eta_1,\eta_2$ are constants. Similarly $b(z)g(z)^{-1}$ can be written as
\[ b(z)g(z)^{-1} = \eta_3\varphi_{odd}(z) + \eta_4 h_q(z)\varphi_{even}(z). \]
\end{lemma}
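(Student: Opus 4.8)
The plan is to combine the structure of the solution space of the second‑order $q$-difference equation \eqref{u diff} with the parity symmetry $z\mapsto -z$. First I would note that $a(z)g(z)^{-1}$ and $b(z)g(z)^{-1}$ are solutions of \eqref{u diff}: this is exactly the substitution $u=yg^{-1}$ carried out in the proof of Lemma \ref{u lemma}, applied to the entire solutions $y=a$ and $y=b$ of \eqref{near 0 q diff} (Lemma \ref{L01}). Since $h_q(qz)=h_q(z)$, the factor $h_q$ is unchanged by each of the shifts $z\mapsto q^{-1}z$ and $z\mapsto q^{-2}z$ appearing in \eqref{u diff}, so $h_q(z)\varphi_{odd}(z)$ and $h_q(z)\varphi_{even}(z)$ are also solutions of \eqref{u diff}. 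Finally, $g$ depends only on $z^2$ and is therefore even, while $h_q$ is odd; hence $ag^{-1}$, $\varphi_{even}$ and $h_q\varphi_{odd}$ are even, and $bg^{-1}$, $\varphi_{odd}$ and $h_q\varphi_{even}$ are odd, so the two claimed identities are at least parity-consistent.

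Next I would verify that $\varphi_{even}$ and $\varphi_{odd}$ are linearly independent over the field $\mathcal K$ of $q$-periodic meromorphic functions on $\mathbb C\setminus\{0\}$, by computing that their Casoratian $\varphi_{even}(z)\varphi_{odd}(q^{-1}z)-\varphi_{even}(q^{-1}z)\varphi_{odd}(z)$ is of leading order $z^{-1}$ at infinity (using the normalisations $\varphi_{even}=1+O(z^{-1})$, $\varphi_{odd}=z^{-1}+O(z^{-2})$) and hence not identically zero. Then $\{\varphi_{even},\varphi_{odd}\}$ is a fundamental system, and every meromorphic solution of \eqref{u diff} on $\mathbb C\setminus\{0\}$ — in particular $ag^{-1}$, which is meromorphic there since $a$ is entire and $g^{-1}$ is meromorphic on $\mathbb C\setminus\{0\}$ — can be written uniquely as $C_1(z)\varphi_{even}(z)+C_2(z)\varphi_{odd}(z)$ with $C_1,C_2\in\mathcal K$. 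Replacing $z$ by $-z$ and invoking uniqueness forces $C_1$ to be even and $C_2$ odd; setting $\widetilde C_2:=C_2/h_q$, which is an even $q$-periodic function (a quotient of two odd $q$-periodic functions), gives $ag^{-1}=C_1\varphi_{even}+\widetilde C_2\,h_q\varphi_{odd}$ with $C_1,\widetilde C_2$ even and $q$-periodic. The same argument applied to the odd function $bg^{-1}$ yields $bg^{-1}=\widetilde C_1\,h_q\varphi_{even}+C_2'\varphi_{odd}$ with $\widetilde C_1,C_2'$ even and $q$-periodic.

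It then remains to show that all of $C_1,\widetilde C_2,\widetilde C_1,C_2'$ are constants. An even $q$-periodic meromorphic function descends to a meromorphic function on the compact torus $\mathbb C^*/q^{\mathbb Z}$, hence is constant as soon as it is holomorphic on $\mathbb C\setminus\{0\}$, so the task is to rule out poles. Solving the $2\times2$ linear system for $C_1,C_2$ by Cramer's rule from the values of $ag^{-1}$ at $z$ and $q^{-1}z$ expresses them as quotients with denominator the Casoratian and numerators built from $ag^{-1}$ and the $\varphi$'s; one then checks that no poles survive — the only singularities of $ag^{-1}$ are the simple poles at $\pm q^{k}$ coming from the simple zeros of $g$, and these are matched exactly by the simple poles of $h_q$ (so that $\widetilde C_2=C_2/h_q$ stays holomorphic there), while $C_1$ acquires no pole at $\pm q^{k}$ and the Casoratian is nonvanishing on the annular region where $\varphi_{even},\varphi_{odd}$ are analytic. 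I expect this last verification — that every apparent pole of the Cramer quotients, whether at a zero of the Casoratian or at a point $\pm q^{k}$, cancels — to be the main obstacle; it is exactly where the simplicity of the zeros of $g$, the domain $|z|>q$ of analyticity of the $\varphi$'s, and above all the precise pole and residue data for $h_q$ established in Appendix \ref{Properties of hq} must be brought together. Once the coefficients are known to be constants, relabelling them $\eta_2:=C_1$, $\eta_1:=\widetilde C_2$, $\eta_4:=\widetilde C_1$, $\eta_3:=C_2'$ gives the two stated identities.
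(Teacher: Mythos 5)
Your first half runs parallel to the paper: via the substitution of Lemma \ref{u lemma}, both $a(z)g(z)^{-1}$ and $b(z)g(z)^{-1}$ solve \eqref{u diff}, one expands them in the system $\{\varphi_{even},\varphi_{odd}\}$ with $q$-periodic coefficients, and parity sorts the terms. The divergence — and the genuine gap — is in how the coefficients are pinned down. The paper closes the argument with Corollary \ref{even coror}: a $q$-periodic function, meromorphic on $\mathbb{C}\setminus\{0\}$ with at most simple poles at $\pm q^{k}$, must equal $c_{0}+c_{1}h_q(z)$, and parity then forces the even coefficient to be the constant $\eta_2$ and the odd one to be exactly $\eta_1 h_q$. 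You never invoke this classification; instead you propose to show directly that $C_1$ and $\widetilde C_2=C_2/h_q$ are pole-free (hence constant) by a Cramer's-rule pole-cancellation analysis, which you explicitly leave undone and flag as ``the main obstacle.'' As written, the decisive step is therefore missing rather than proved.

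Two concrete points show the route you sketch would not go through as stated. First, your assertion that the Casoratian is nonvanishing where $\varphi_{even},\varphi_{odd}$ are analytic is false: for \eqref{u diff} the Casoratian $W$ satisfies $W(q^{-1}z)=q^{5}\bigl(z^{-4}+q^{-4}\bigr)W(z)$, so it vanishes along the rays $e^{\frac{i\pi(1+2n)}{4}}q^{-k}$, $k\in\mathbb{N}$ (precisely the pole set of $w$), and the Cramer quotients for $C_1,C_2$ then carry apparent singularities there whose cancellation is a nontrivial extra verification. Second, even granting that $C_2$ has only simple poles at $\pm q^{k}$ matching those of $h_q$, the quotient $C_2/h_q$ can still acquire poles at the \emph{zeros} of $h_q$, which lie at $\pm q^{k+1/2}$ (Lemma \ref{zero real lemma}, Corollary \ref{corollary a.4}); you never rule these out, and without doing so the torus argument for constancy of $\widetilde C_2$ collapses. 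Both difficulties are exactly what Corollary \ref{even coror} disposes of at once: the odd $q$-periodic coefficient is forced to be a constant multiple of $h_q$ on the nose, so it automatically vanishes where $h_q$ does. Replacing your holomorphy check by an appeal to that corollary (after locating the possible poles of the $C_i$ at the zeros of $g$, as the paper does) repairs the argument and reduces it to the paper's proof.
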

\begin{proof}
From Lemma \ref{u lemma}, we conclude that $a(z)$ satisfies the same difference equation as $g(z)\varphi_{even}(z)$ and $g(z)\varphi_{odd}(z)$. It follows that
\[ a(z) = g(z)(C_1(z)\varphi_{even}(z) + C_2(z)\varphi_{odd}(z)), \]
for some functions $C_i(z)$ which satisfy $C_i(qz) = C_i(z)$. As $a(z)$ is analytic everywhere and $\varphi_{even}(z)$ and $\varphi_{odd}(z)$ are holomorphic for $|z|>q$ we conclude that $C_i(z)$ is constant or has simple poles at the zeros of $g(z)$, which occur at $\pm q^{k}$ for $k\in\mathbb{Z}$. Applying Corollary \ref{even coror} to $C_1(z)$ and $C_2(z)$, and comparing even and odd terms we conclude that 
\[ a(z) = g(z)(\eta_1 h_q(z)\varphi_{odd}(z) + \eta_2 \varphi_{even}(z)), \]
and the first part of Lemma \ref{a/g lemma} follows immediately. The equation for $b(z)/g(z)$ also follows using similar arguments.
\end{proof}

\begin{remark}
It follows from the above Lemma that $\eta_1$, $\eta_2$, $\eta_3$ and $\eta_4$ are all non-zero. We know that $a(z)$ is a non-zero analytic function and thus $a(q^k)\neq 0$ for large enough $k$. However, as shown in Corollary \ref{corollary a.4}, $g(q^k)=h_q(q^{k+1/2}) = 0$ for $k\in\mathbb{Z}$. Thus, if $\eta_1=0$ or $\eta_2=0$ then we arrive at a contradiction. (Note that by Equation \eqref{u diff} $\varphi(z)$ cannot have poles along the real axis.)  
\end{remark}

\begin{lemma}\label{agw lemma}
$\phi_{even}(z)$ as defined in Lemma \ref{v lemma} can be written as
\[ \phi_{even}(z) = g(z)w(z)(\lambda_3 h_q(z)b(z)+\lambda_4 a(z) ),\]
where $\lambda_3$, $\lambda_4$ are constants. Similarly 
\[ \phi_{odd}(z) = g(z)w(z)(\lambda_1 h_q(z) a(z) + \lambda_2 b(z)),\]
\end{lemma}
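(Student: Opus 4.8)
The plan is to mirror exactly the argument used to establish Lemma~\ref{a/g lemma}, but now working with the product $y(z)g(z)w(z)$ rather than $y(z)g(z)^{-1}$. First I would recall from Lemma~\ref{v lemma} that $\phi_{even}(z)$ and $\phi_{odd}(z)$, being the even and odd solutions of Equation~\eqref{v diff}, are precisely the functions of the form $y(z)g(z)w(z)$ where $y(z)$ ranges over solutions of Equation~\eqref{near 0 q diff}; in particular $a(z)g(z)w(z)$ and $b(z)g(z)w(z)$ both solve Equation~\eqref{v diff}. Since the solution space of the (second-order) $q$-difference equation~\eqref{v diff} is two-dimensional over the field of $q$-periodic functions, and $a(z)g(z)w(z)$, $b(z)g(z)w(z)$ are visibly independent (one even, one odd), any solution of~\eqref{v diff} — in particular $\phi_{even}(z)$ — can be written as
\[
\phi_{even}(z) = g(z)w(z)\bigl(C_1(z)a(z) + C_2(z)b(z)\bigr),
\]
for some functions $C_i$ with $C_i(qz)=C_i(z)$.

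Next I would pin down the nature of the $C_i(z)$. The functions $a(z)$, $b(z)$ are entire, and $g(z)w(z)$ has a known zero/pole structure: $g(z)$ vanishes at $\pm q^k$ for $k\in\mathbb Z$ (Corollary~\ref{corollary a.4}) and $w(z)=1/(-z^4,q^4;q^4)_\infty$ has poles where $(-z^4;q^4)_\infty$ vanishes, i.e.\ at the points $e^{i\pi(1+2n)/4}q^{-k}$. Meanwhile $\phi_{even}(z)$ is analytic in $\mathbb C\setminus\{0\}$ by Lemma~\ref{v lemma}. Comparing singularities forces each $C_i(z)$ to be either constant or to have at worst simple poles at the zeros of $g(z)$ (the poles of $w(z)$ are automatically absent on the left-hand side and cannot be cancelled by the $q$-periodic $C_i$). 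Then, exactly as in the proof of Lemma~\ref{a/g lemma}, I would invoke Corollary~\ref{even coror}: a $q$-periodic function whose only singularities are simple poles at $\pm q^k$ is a constant multiple of $h_q(z)$ (up to adding a constant), so $C_i(z) = \alpha_i + \beta_i h_q(z)$. Finally, matching parities: $h_q(z)$ is odd and $h_q(z)a(z)$ is odd while $h_q(z)b(z)$ is even, so for $\phi_{even}(z)$ only the even combination survives, giving $\phi_{even}(z) = g(z)w(z)\bigl(\lambda_3 h_q(z)b(z) + \lambda_4 a(z)\bigr)$; the odd case is identical with the roles of $a$ and $b$ swapped, yielding the stated formula for $\phi_{odd}(z)$.

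The main obstacle I anticipate is the singularity bookkeeping in the second step: one must be careful that the apparent poles of $w(z)$ on the right-hand side genuinely cannot appear — this uses that $h_q$ and the constants are regular there and that the left side $\phi_{even}$ is holomorphic away from $0$, so no cancellation mechanism is available to introduce them, forcing the combination $C_1 a + C_2 b$ to vanish to the appropriate order at those points; since $C_i$ are $q$-periodic and cannot themselves vanish at an isolated point of a $q$-orbit without vanishing on the whole orbit, this is actually automatic from the structure already built into $\phi_{even} = y g w$. The remaining work — checking that $\phi_{even}$ really is in the span and that the normalisation constants $\lambda_3,\lambda_4$ (resp.\ $\lambda_1,\lambda_2$) are nonzero — is routine and parallels the remark following Lemma~\ref{a/g lemma}: if, say, $\lambda_4=0$ then $\phi_{even}(z)=\lambda_3 g(z)w(z)h_q(z)b(z)$, which vanishes at points where $g$ or $h_q$ does but $b$ does not, contradicting that $\phi_{even}$ has a nonzero constant term at infinity and is analytic on $\mathbb C\setminus\{0\}$.
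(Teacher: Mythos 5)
Your proposal is correct and follows essentially the same route as the paper's proof: observe that $a(z)g(z)w(z)$ and $b(z)g(z)w(z)$ solve Equation \eqref{v diff}, write $\phi_{even}$ (resp. $\phi_{odd}$) as a combination with $q$-periodic coefficients, constrain those coefficients by the singularity structure, apply Corollary \ref{even coror}, and compare even and odd parts. The extra remarks on the poles of $w(z)$ and on the non-vanishing of the $\lambda_i$ are not needed for the statement itself (the former is the content of Remark \ref{a b infty relation}, derived afterwards), but they do not affect the argument.
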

\begin{proof}
From Lemma \ref{v lemma} we conclude that $\phi_{even}(z)$ satisfies the same difference equation (Equation \eqref{v diff}) as $a(z)g(z)w(z)$ and $b(z)g(z)w(z)$. It follows that
\[ \phi_{even}(z) = g(z)w(z)(C_1(z) a(z) + C_2(z) b(z)),\]
for some functions $C_i(z)$ that satisfy $C_i(qz) = C_i(z)$. As $a(z)$ and $b(z)$ are analytic everywhere and $\phi_{even}(z)$ is holomorphic in $\mathbb{C}\setminus \{0\} $ we conclude that $C_i(z)$ is constant or has simple poles at the zeros of $g(z)$, which occur at $\pm q^{k}$ for $k\in\mathbb{Z}$. Applying Corollary \ref{even coror} to $C_1(z)$ and $C_2(z)$, and comparing even and odd terms we conclude that 
\begin{equation}\label{phi_even def}
    \phi_{even}(z) = g(z)w(z)(\lambda_4 a(z) + \lambda_3 h_q(z)b(z)),
\end{equation} 
and the first part of Lemma \ref{a/g lemma} follows immediately. The equation for $\phi_{odd}(z)$ also follows using similar arguments.
\end{proof}
\begin{remark}\label{a b infty relation}
We note that $w(z)$ has poles at $e^{\frac{i(\pi + 2n\pi)}{4}}q^{-k}$ for $k \in \mathbb{N}$, and $n\in 0,1,2,3$. As $\phi_{even}$ is analytic at these locations we conclude 
\[ \lambda_4 a(e^{\frac{i(\pi + 2n\pi)}{4}}q^{-k}) + \lambda_3 h_q(e^{\frac{i(\pi + 2n\pi)}{4}})b(e^{\frac{i(\pi + 2n\pi)}{4}}q^{-k})=0. \]
Thus,
\[ \lambda_4  = -\lambda_3 \frac{h_q(e^{\frac{i\pi}{4}})b(e^{\frac{i\pi}{4}})}{a(e^{\frac{i\pi}{4}})} . \]
\end{remark}

We are now in a position to solve the RHP given in Definition \ref{near model RHP}. 

\begin{lemma}\label{frakW def}
The solution of the $\mathfrak{W}$-RHP (Definition \ref{near model RHP}) is given by:
\begin{align} \nonumber
\mathfrak{W}(z) = \left\{\begin{array}{lr}
\begin{bmatrix}
 \eta_2^{-1}a(z) &
\lambda_2\eta_{2}^{-1}\lambda_1^{-1} w(z)b(z)\\
\lambda_3 b(z) &
\lambda_4 w(z)a(z)
\end{bmatrix}, &\text{for}\,z\in\mathcal{D}_-, \\
\begin{bmatrix}
\eta_2^{-1} a(z) &
\lambda_2\eta_{2}^{-1}\lambda_1^{-1} w(z)b(z)\\
\lambda_3 b(z) &
\lambda_4 w(z)a(z)
\end{bmatrix}
\begin{bmatrix}
g(z)^{-1} &
w(z)g(z)h_q(z)\\
0 &
g(z)
\end{bmatrix}, &\text{for}\,z\in\mathcal{D}_{+}.
\end{array}\right.
\end{align}
where,
\begin{eqnarray}
    \lambda_2  &=& -\lambda_1 \frac{h_q(e^{\frac{i\pi}{4}})b(e^{\frac{i\pi}{4}})}{a(e^{\frac{i\pi}{4}})} \label{lambda12},\\
    \lambda_4  &=& -\lambda_3 \frac{h_q(e^{\frac{i\pi}{4}})b(e^{\frac{i\pi}{4}})}{a(e^{\frac{i\pi}{4}})}. \label{lambda34}
\end{eqnarray}

\end{lemma}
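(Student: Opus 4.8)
The plan is to verify directly that the piecewise matrix $\mathfrak{W}(z)$ given in the statement satisfies each of the four defining conditions of the $\mathfrak{W}$-RHP (Definition \ref{near model RHP}), using the structural lemmas already established. I would proceed condition by condition.

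\textbf{Meromorphy and pole structure.} First I would check that $\mathfrak{W}(z)$ is meromorphic in $\mathbb{C}\setminus\Gamma$ with the correct pole data. On $\mathcal{D}_-$ the entries are $a(z)$, $b(z)$ (entire, by Lemma \ref{L01}) and $w(z)a(z)$, $w(z)b(z)$. Since $w(z)$ has poles at $e^{i(\pi+2n\pi)/4}q^{-k}$, the naive worry is poles in the second column inside $\mathcal{D}_-$; but an appropriate curve (Definition \ref{admissable}) places all those points $e^{i(\pi+2n\pi)/4}q^{-k}$ in $\mathcal{D}_+$, so on $\mathcal{D}_-$ the second column is holomorphic. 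The first column $a(z)$, $b(z)$ is entire, so $\mathfrak{W}$ is in fact holomorphic on $\mathcal{D}_-$ except it will have the required simple poles at $\pm q^{-k}$, $k\ge 1$ — but those points also lie in $\mathcal{D}_+$, so the poles at $\pm q^{-k}$ arise only from the $\mathcal{D}_+$ formula. On $\mathcal{D}_+$ the factor $g(z)^{-1}$ contributes simple poles at the zeros of $g$, i.e. at $\pm q^k$ for all $k\in\mathbb{Z}$; combined with the zeros of $a(z)$, $b(z)$, $w(z)$ I must check that the only surviving poles in $\mathcal{D}_+$ are simple poles at $\pm q^{-k}$, $k\ge 1$, located in the \emph{first} column (since the residue condition places them there). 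Here I would use Lemma \ref{a/g lemma}, which rewrites $a(z)g(z)^{-1}$ and $b(z)g(z)^{-1}$ in terms of $\varphi_{even},\varphi_{odd}$ (holomorphic for $|z|>q$) and $h_q(z)$ (whose poles at $\pm q^k$ are known and simple); this controls the first-column entries of the $\mathcal{D}_+$ formula, and the $(1,2)$ and $(2,2)$ entries involve $\phi_{even},\phi_{odd}$ via Lemma \ref{agw lemma}, which are holomorphic in $\mathbb{C}\setminus\{0\}$.

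\textbf{Jump and residue conditions.} The jump condition \eqref{eq:gwh} is immediate: the $\mathcal{D}_+$ formula is by construction the $\mathcal{D}_-$ formula right-multiplied by exactly the matrix $\left[\begin{smallmatrix} g^{-1} & wgh_q \\ 0 & g\end{smallmatrix}\right]$, so $\mathfrak{W}^+ = \mathfrak{W}^-$ times that matrix on $\Gamma$, which is \eqref{eq:gwh}. For the residue condition at $\pm q^{-k}$ I would compute $\mathrm{Res}$ of the $\mathcal{D}_+$ formula: the pole comes from $g(z)^{-2}$-type contributions, and I'd match it against $\lim_{z\to\pm q^{-k}}\mathfrak{W}(z)\left[\begin{smallmatrix}0 & 0 \\ (z\mp q^{-k})g^{-2}h_q^{-1}w^{-1} & 0\end{smallmatrix}\right]$, using that $g$ has a simple zero at $\pm q^{-k}$ and the known relation \eqref{g diff induction} / \eqref{g diff} to evaluate the residue of $g^{-1}$ there. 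This is the kind of check where one expands the limit and verifies the two columns agree.

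\textbf{Asymptotics and the constants $\lambda_2,\lambda_4$.} For condition (iii) I need the behaviour as $z\to\infty$ (staying away from the poles $\pm q^{-k}$ as required). Using Lemma \ref{a/g lemma} to expand $a(z)g(z)^{-1}$ and $b(z)g(z)^{-1}$ at infinity in terms of $\varphi_{even}\to 1$, $\varphi_{odd}\sim z^{-1}$, and $h_q(z)\sim 2/z$ as $|z|\to\infty$, I would compute the leading order of each entry of the $\mathcal{D}_+$ formula and show it equals $\left[\begin{smallmatrix}1 & 0 \\ c_0 h_q(z) & 1\end{smallmatrix}\right] + O(1/z)$ for a nonzero constant $c_0$ (which will come out as $c_0 = \lambda_3$, up to the normalisation constants $\eta_2$ etc.); the $(1,1)$ entry $\eta_2^{-1}a(z)g(z)^{-1}$ tends to $1$ by the normalisation of $a$ and the definition of $\eta_2$, and the $(2,1)$ entry produces the $h_q$ term. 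Finally, the specific values \eqref{lambda12}--\eqref{lambda34} of $\lambda_2$ and $\lambda_4$ are forced: $\lambda_4$ by Remark \ref{a b infty relation} (analyticity of $\phi_{even}$ at the poles of $w$ via Lemma \ref{agw lemma}), and $\lambda_2$ by the identical argument applied to $\phi_{odd}$, i.e. requiring the $(1,2)$ entry $\lambda_2\eta_2^{-1}\lambda_1^{-1}w(z)b(z)$ — which in the $\mathcal{D}_+$ chart becomes proportional to $\phi_{odd}(z)$ — to be analytic where $w$ blows up.

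\textbf{Uniqueness.} I would close by the standard determinant argument (as in the previous lemma): $\det\mathfrak{W}$ has trivial jump and no poles, hence is entire, and tends to a constant at infinity (it will not tend to $1$ here because of the $h_q$ term in the asymptotics, but to a nonzero constant), so it is that constant; then $\mathfrak{W}$ is invertible and any two solutions differ by an entire matrix tending to $I$, hence are equal.

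\textbf{Main obstacle.} The delicate part is the bookkeeping in the $\mathcal{D}_+$ chart: simultaneously tracking the zeros of $g$, $a$, $b$, $w$ and the poles of $g^{-1}$ and $h_q$ to confirm that $\mathfrak{W}$ has \emph{exactly} simple poles at $\pm q^{-k}$ ($k\ge 1$) in the first column and nowhere else, and that the contributions at $\pm q^k$ for $k\le 0$ (where $g^{-1}$ also has poles but $a,b$ or $w$ supply compensating zeros, or the residue condition is vacuous) genuinely cancel. Getting the constant $c_0$ and the forced values of $\lambda_2,\lambda_4$ to be mutually consistent is the second place where care is needed, but Lemmas \ref{a/g lemma}, \ref{agw lemma} and Remark \ref{a b infty relation} are tailored precisely for this, so it should go through.
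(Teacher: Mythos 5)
Your overall strategy---direct verification of conditions (i)--(iv) using Lemmas \ref{a/g lemma} and \ref{agw lemma}, with $\lambda_2,\lambda_4$ forced by analyticity at the poles of $w$ (Remark \ref{a b infty relation}), so that the second column in $\mathcal D_+$ becomes $(\eta_2^{-1}\lambda_1^{-1}\phi_{odd},\ \phi_{even})$---is the same as the paper's. However, your treatment of the asymptotic condition (iii) contains a step that fails: the claim $h_q(z)\sim 2/z$ as $|z|\to\infty$ is false. Since $h_q(qz)=h_q(z)$, the function $h_q$ cannot decay at infinity (evaluate along $z=q^{-n}z_0$); it is merely bounded away from its poles and oscillates on the multiplicative lattice scale. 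This is precisely why condition (iii) of Definition \ref{near model RHP} reads $\mathfrak W(z)\to\bigl[\begin{smallmatrix}1&0\\ c_0h_q(z)&1\end{smallmatrix}\bigr]+O(1/z)$ rather than $I+O(1/z)$. If you ran your leading-order computation with $h_q\sim 2/z$ you would conclude $\mathfrak W\to I+O(1/z)$, which is both false---the $(2,1)$ entry $\lambda_3 b(z)g(z)^{-1}=\lambda_3(\eta_3\varphi_{odd}(z)+\eta_4 h_q(z)\varphi_{even}(z))$ tends to $\lambda_3\eta_4 h_q(z)$, which does not vanish---and inconsistent with the target you yourself state. The correct verification (and the paper's) uses only $\varphi_{even}\to1$, $\varphi_{odd}=O(1/z)$, boundedness of $h_q$ away from its poles, and the identification $c_0=\lambda_3\eta_4$ together with an argument that this constant is non-zero; the paper obtains it from $\lambda_3\eta_4h_q^2+\lambda_4\eta_2\sim 1/(g^2w)$ and Equation \eqref{c0 def}, and some such step is needed in your version as well.

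Two smaller points. Your uniqueness paragraph goes beyond the paper (whose proof only verifies that the displayed matrix satisfies the RHP), and as sketched it does not close: condition (iii) fixes $c_0$ only as \emph{some} non-zero constant, so for two putative solutions with constants $c_0,\hat c_0$ the ratio $M=\widehat{\mathfrak W}\,\mathfrak W^{-1}$ tends to $\bigl[\begin{smallmatrix}1&0\\(\hat c_0-c_0)h_q&1\end{smallmatrix}\bigr]$ rather than to $I$, and an extra argument is required to exclude $\hat c_0\neq c_0$. Also, the asymptotic matrix is unitriangular, so $\det\mathfrak W\to1$ after all; that parenthetical slip is harmless. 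The remaining pole, jump and residue bookkeeping in your proposal matches what the paper dispatches as ``conditions (ii) and (iv) follow immediately from the definition.''
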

\begin{proof}
First we show condition (i) (meromorphicity) is satisfied. With the choice of $\lambda_i$ given by Equations \eqref{lambda12} and \eqref{lambda34} it is clear from Remark \ref{a b infty relation} that $\mathfrak{W}(z)$ has analytic entries in the second column for $z\in \mathcal{D}_+$. In particular the second column has entries equal to $\eta_{2}^{-1}\lambda_1^{-1}\phi_{odd}(z)$ and $\phi_{even}(z)$ for $z\in\mathcal{D}_{+}$. Meromorphicity of the LHS column follows immediately from the definition of $a(z)$ and $b(z)$.\\

Conditions (ii) and (iv) follow immediately from the definition of $\mathfrak{W}(z)$. It is left to show condition (iii) is satisfied.
Taking the limit $z\to \infty$ of Equation \eqref{phi_even def} and applying Lemma \ref{a/g lemma} we find 
\[\lambda_3 \eta_4 h_q(z)^{2} + \lambda_4 \eta_2 \sim  \frac{1}{g(z)^{2}w(z)}.\]
Thus,
\[\lambda_3   =  \lim_{k\to \infty} \frac{1}{\eta_4 g(q^{-k})^{2}w(q^{-k})h_q(q^{-k})^2}.\]
Let
\begin{equation}\label{c0 def}
    c_0 = \lim_{k\to \infty} \frac{1}{g(q^{-k})^{2}w(q^{-k})h_q(q^{-k})^2}.
\end{equation}
Hence, applying Lemma \ref{a/g lemma} again we find that as $z\to\infty$ 
\[\lambda_3 b(z)g(z)^{-1} \sim  h_q(z) c_0.\]
We conclude that as $z\to\infty$ the matrix $\mathfrak{W}(z)$ behaves like
\begin{eqnarray} 
\mathfrak{W}(z) &=& \begin{bmatrix}
\eta_2^{-1}a(z) &
\lambda_2\eta_{2}^{-1}\lambda_1^{-1} w(z)b(z)\\
\lambda_3 b(z) &
\lambda_4 w(z)a(z)
\end{bmatrix}
\begin{bmatrix}
g(z)^{-1} &
w(z)g(z)h_q(z)\\
0 &
g(z)
\end{bmatrix} \nonumber \\
&=& 
\begin{bmatrix}
1 &
0\\
h_q(z)c_0 &
1
\end{bmatrix} + O(1/z)\label{frak w to 0}
\end{eqnarray}
\end{proof}

\section{Far-field RHP}\label{far-field RHP section}
In this section, we solve the far-field RHP, which we denote by $\mathcal{W}$-RHP (see Definition \ref{far RHP}). The independent variable in the far-field and near-field RHPs are related through a scaling transformation. To distinguish the two, we use $t$ instead of $z$ to denote a complex variable in this section. Motivated by the form of Equation \eqref{second jump} we introduce the following RHP.
\begin{definition}[$\mathcal{W}$-RHP]\label{far RHP}
Let $\Gamma$ be an appropriate curve (see Definition \ref{admissable}) with interior $\mathcal D_-$ and exterior $\mathcal D_+$. A $2\times 2$ complex matrix function $\mathcal{W}(t)$, $t\in\mathbb C$, is a solution of the RHP if it satisfies the following conditions:
\begin{enumerate}[label={{\rm (\roman *)}}]
\begin{subequations}
\item $\mathcal{W}(t)$ is meromorphic in $\mathbb{C}\setminus \Gamma$,  with simple poles at $t=\pm q^{k}$ for $k\in \mathbb{Z}$.
\item $\mathcal{W}(t)$ has continuous boundary values $\mathcal{W}^-(s)$ and $\mathcal{W}^+(s)$ as $t$ approaches $s\in\Gamma$ from $\mathcal D_{-}$ and $\mathcal D_{+}$ respectively, where 
    \begin{gather} 
      \mathcal{W}^+(s)
     =
      \mathcal{W}^-(s)
      \begin{bmatrix}
        g(s) &
        0  \\
        0 &
        g(s)^{-1}
        \end{bmatrix}, \; s\in\Gamma.
    \end{gather}
    
\item $\mathcal{W}(t)$ satisfies
    \begin{equation}
    \mathcal{W}(t) = I + O\left( \frac{1}{|t|} \right), \text{ as $|t| \rightarrow \infty$} .
    \end{equation} 
    Due to the simple poles in the second column of $\mathcal{W}(t)$, the asymptotic decay condition only holds for $|z\pm q^{-k}| > R$ for any $R>0$.
 
\item The residue at the poles $t=\pm q^{k}$ for $k\in \mathbb{N}$ is given by
    \begin{gather} 
    \text{Res}(\mathcal{W}(\pm q^{-k}))
     =
      \lim_{t \rightarrow \pm q^{-k}} \mathcal{W}(t)
      \begin{bmatrix}
       0 &
       0  \\
       (t\mp q^{-k})g(t)^{-2}h_q(t)^{-1}\omega(t)^{-1} &
       0
       \end{bmatrix},
    \end{gather}
    where $\omega(t)$ is defined in Equation \eqref{omega def}.
    
\item The residue at the poles $t=\pm q^{-k}$ for $1\le k\in \mathbb{N}$ is given by
    \begin{gather} \label{w cal res}
    \text{Res}(\mathcal{W}(\pm q^{-k}))
     =
      \lim_{t \rightarrow \pm q^{-k}} \mathcal{W}(t)
      \begin{bmatrix}
       0 &
       (t\mp q^{-k})h_q(t)\omega(t)  \\
       0 &
       0
       \end{bmatrix}.
    \end{gather}
\end{subequations} 
\end{enumerate} 
\end{definition}

We will explicitly solve this RHP, using a similar approach to Section \ref{near-field rhp section}. To do so, we prove a sequence of Lemmas.
\begin{lemma}\label{Linfty1}
Consider the difference equation
\begin{equation}\label{near infty q diff}
    y_1(q^{-2}t)q^7t^{-4} - (1-q^2(q+1)t^{-2})y_1(q^{-1}t) + y_1(t)=0.
\end{equation}
There exists a solution of Equation \eqref{near infty q diff}, analytic in $\mathbb{C}\setminus \{0\}$, which can be represented by the even power series
\[ a_{\infty}(t) = \sum_{j=0}^\infty a_{2j}t^{-2j}, \]
where we take $a_0 = 1$ (w.l.o.g.).

Similarly, there exists a solution analytic in $\mathbb{C}\setminus \{0\}$, of the difference equation
\begin{equation}\label{near infty q diff 2}
    y_2(q^{-2}t)q^7t^{-4} - (q^{-1}-q^2(q+1)t^{-2})y_2(q^{-1}t) + y_2(t)=0,
\end{equation}
which can be represented by the odd power series
\[ b_{\infty}(t) = \sum_{j=0}^\infty b_{2j+1}t^{-2j-1} ,\]
without loss of generality let $b_1 =1$.

\end{lemma}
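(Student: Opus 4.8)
The plan is to imitate the proof of Lemma \ref{L01}: substitute a formal power series at infinity into each difference equation, extract a recurrence for the coefficients, check that the recurrence is well posed and forces an even (respectively odd) solution once a single coefficient is normalised, and finally verify that the coefficients decay fast enough for the series to converge throughout $\mathbb{C}\setminus\{0\}$.

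For Equation \eqref{near infty q diff} I would write $y_1(t)=\sum_{i\ge 0}a_it^{-i}$ with the convention $a_i=0$ for $i<0$, use $y_1(q^{-1}t)=\sum_i a_iq^it^{-i}$ and $y_1(q^{-2}t)=\sum_i a_iq^{2i}t^{-i}$, and collect the coefficient of $t^{-j}$. A short computation gives the three-term relation
\[
(1-q^{j})\,a_j+(1+q)q^{j}\,a_{j-2}+q^{2j-1}\,a_{j-4}=0,
\qquad\text{i.e.}\qquad
a_j=-\frac{(1+q)q^{j}a_{j-2}+q^{2j-1}a_{j-4}}{1-q^{j}}.
\]
Since $0<q<1$, the factor $1-q^{j}$ never vanishes for $j\ge 1$, so fixing $a_0=1$ determines every $a_j$ uniquely; moreover the odd-indexed coefficients are forced to vanish (from $j=1$ one gets $(1-q)a_1=0$), which yields the even solution $a_\infty$. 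For Equation \eqref{near infty q diff 2} the same substitution produces
\[
(1-q^{j-1})\,b_j+(1+q)q^{j}\,b_{j-2}+q^{2j-1}\,b_{j-4}=0,
\]
and here the leading factor $1-q^{j-1}$ vanishes exactly at $j=1$. One checks that at $j=1$ this relation degenerates to $0=0$ because $b_{-1}=b_{-3}=0$, so $b_1$ is a genuine free parameter; normalising $b_1=1$ determines all $b_j$ for $j\ge 2$, the $j=0$ relation $(1-q^{-1})b_0=0$ forces $b_0=0$, and the remaining even-indexed coefficients then all vanish, giving the odd solution $b_\infty$.

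It remains to prove convergence, which is the only substantive point, though it is mild. For large $j$ the denominator $1-q^{j}$ (respectively $1-q^{j-1}$) is bounded away from $0$ and the numerator is dominated by its first term, so the recurrence gives a bound of the shape $|a_j|\le (1+q+o(1))\,q^{j}\max(|a_{j-2}|,|a_{j-4}|)$, exactly as in the proof of Lemma \ref{v lemma}. Iterating this estimate in a telescoping product shows that $|a_j|$, and likewise $|b_j|$, decays super-exponentially, so $\sum_j a_jt^{-j}$ and $\sum_j b_jt^{-j}$ converge locally uniformly on $\mathbb{C}\setminus\{0\}$; hence $a_\infty$ and $b_\infty$ are analytic there, as claimed.

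I do not expect a real obstacle, since the whole argument runs parallel to Lemmas \ref{L01}, \ref{v lemma} and \ref{u lemma}. The single point that needs care is the vanishing of the coefficient $1-q^{j-1}$ at $j=1$ in the second recurrence: this is precisely what makes $b_1$ a legitimate normalisation constant rather than an overdetermined system, so I would isolate that index and confirm the compatibility condition before running the recurrence for $j\ge 2$.
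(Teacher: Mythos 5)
Your proposal is correct and follows essentially the same route as the paper: substitute the power series at infinity, read off the three-term coefficient recurrence (the paper records exactly your relation, with denominator $q^i-1$), and conclude existence of the even and odd solutions. The extra care you take over the degenerate index $j=1$ in the second recurrence and over the super-exponential decay ensuring convergence on $\mathbb{C}\setminus\{0\}$ only makes explicit what the paper leaves implicit ("follows immediately" / "similar arguments").
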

\begin{proof}
Let 
\[y_1(z) = \sum_{i=0}^\infty y_it^{-i} .\]
Substituting this into Equation \eqref{near infty q diff} and comparing coefficients of $t$, we find that $y(t)$ is a solution iff
\[y_i = -\frac{(1+q)q^{i}y_{i-2} + q^{2i-1}y_{i-4}}{q^i-1} .\]
The first part of Lemma \ref{Linfty1} follows immediately. The second part follows using similar arguments.
\end{proof}

\begin{lemma}\label{v infty lemma}
Define 
\[ \alpha(t) = y_1(t)g(t)\omega(t) ,\]
where $y_1(t)$ satisfies the difference equation given by Equation \eqref{near infty q diff} and $\omega(t)$ is defined in Equation \eqref{omega def}. Then $\alpha(t)$ is a solution of the difference equation
\begin{equation}\label{v infty diff}
q\alpha(q^{-2}t) + (t^{2}q^{-2}-(1+q))\alpha(q^{-1}t) + \alpha(t) = 0.
\end{equation}
Furthermore, there exists two entire solutions of Equation \eqref{v infty diff} which can be represented by an even and odd power series. \\

Similarly, define
\[ \beta(t) = y_2(t)g(t)\omega(t) ,\]
where $y_2(t)$ satisfies the difference equation given by Equation \eqref{near infty q diff 2}, then $\beta(t)$ satisfies the difference equation
\begin{equation}\label{v infty diff2}
q\beta(q^{-2}t) + (t^{2}q^{-3}-(1+q))\beta(q^{-1}t) + \beta(t) = 0.
\end{equation}
Furthermore, there exists two entire solutions which can be represented by an even and odd power series.
\end{lemma}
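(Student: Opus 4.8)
The plan is to follow the template of Lemma~\ref{v lemma}: substitute the product $\alpha(t)=y_1(t)g(t)\omega(t)$ into the $q$-difference equation \eqref{near infty q diff} satisfied by $y_1$, use the functional equations for $g$ and $\omega$ to turn shifts of $y_1$ into shifts of $\alpha$, and read off that the resulting three-term relation is exactly \eqref{v infty diff}; the existence of two entire even/odd series solutions is then a separate Frobenius-type computation carried out directly on \eqref{v infty diff}.

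First I would record the backward-shift forms of Lemmas~\ref{g diff lemma} and~\ref{omega diff lemma}. From $g(qz)=-z^{-2}g(z)$ one gets $g(q^{-1}t)=-q^{-2}t^2 g(t)$ and, iterating, $g(q^{-2}t)=q^{-6}t^4 g(t)$; from $\omega(qz)=z^4\omega(z)$ one gets $\omega(q^{-1}t)=q^4 t^{-4}\omega(t)$ and $\omega(q^{-2}t)=q^{12}t^{-8}\omega(t)$. Multiplying, $g(q^{-1}t)\omega(q^{-1}t)=-q^2 t^{-2}\,g(t)\omega(t)$ and $g(q^{-2}t)\omega(q^{-2}t)=q^6 t^{-4}\,g(t)\omega(t)$, so that $y_1(q^{-1}t)=-q^{-2}t^2\,\alpha(q^{-1}t)/(g(t)\omega(t))$, $y_1(q^{-2}t)=q^{-6}t^4\,\alpha(q^{-2}t)/(g(t)\omega(t))$ and $y_1(t)=\alpha(t)/(g(t)\omega(t))$. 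Plugging these into \eqref{near infty q diff} and clearing the common factor $g(t)\omega(t)$: the $q^7t^{-4}$ multiplies $q^{-6}t^4$ to leave $q\,\alpha(q^{-2}t)$; the middle term becomes $q^{-2}t^2\bigl(1-q^2(q+1)t^{-2}\bigr)\alpha(q^{-1}t)=\bigl(q^{-2}t^2-(q+1)\bigr)\alpha(q^{-1}t)$; and the last term is $\alpha(t)$. This is precisely \eqref{v infty diff}. The claim for $\beta(t)=y_2(t)g(t)\omega(t)$ follows from the identical computation applied to \eqref{near infty q diff 2}; the only change is that the constant $1$ in the middle coefficient is replaced by $q^{-1}$, producing $\bigl(q^{-3}t^2-(q+1)\bigr)\beta(q^{-1}t)$ and hence \eqref{v infty diff2}.

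For the existence of entire solutions I would insert $\alpha(t)=\sum_{i\ge 0}\alpha_i t^i$ into \eqref{v infty diff} and compare coefficients of $t^i$, obtaining $\bigl(q^{1-2i}-(1+q)q^{-i}+1\bigr)\alpha_i=-q^{-i}\alpha_{i-2}$. The left-hand factor equals $q^{-2i}(q^i-1)(q^i-q)$, which vanishes only for $i=0$ and $i=1$; thus $\alpha_0$ and $\alpha_1$ are free and all higher coefficients are determined, giving one even ($\alpha_1=0$) and one odd ($\alpha_0=0$) formal solution. Since $\alpha_i/\alpha_{i-2}=-q^i/\bigl((q^i-1)(q^i-q)\bigr)\to 0$ as $i\to\infty$, a telescoping estimate gives infinite radius of convergence, so both series are entire. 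The same ansatz on \eqref{v infty diff2} yields $\bigl(q^{1-2i}-(1+q)q^{-i}+1\bigr)\beta_i=-q^{-i-1}\beta_{i-2}$, with the identical left-hand factor, so the even and odd entire solutions exist there too.

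The only thing demanding care is the bookkeeping of the iterated $q$-shifts of $g$ and $\omega$ and checking that the powers of $q$ and $t$ combine exactly to reproduce \eqref{v infty diff} and \eqref{v infty diff2}; there is no genuine analytic obstacle, since the entirety of the series solutions is immediate once one notes the recursion's leading factor vanishes only at $i=0,1$.
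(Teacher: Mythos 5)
Your proposal is correct and follows essentially the same route as the paper: substitute $\alpha=y_1 g\omega$ (resp. $\beta=y_2 g\omega$) into \eqref{near infty q diff} (resp. \eqref{near infty q diff 2}), use the backward-shifted relations for $g$ and $\omega$ to obtain \eqref{v infty diff} and \eqref{v infty diff2}, and then run the power-series ansatz to get the two-term recursion whose denominator vanishes only at $i=0,1$. Your explicit factorisation $q^{-2i}(q^i-1)(q^i-q)$ and the ratio estimate for entirety just spell out details the paper leaves as "follows immediately" (and your coefficient recursions are in fact the corrected versions of the paper's, which contains a minor typo in the numerator).
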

\begin{proof}
From the definition of $\alpha(t)$, Equations \eqref{g diff} and \eqref{omega diff} we find that
\begin{eqnarray*}
    y_1(q^{-1}t) &=& \frac{\alpha(q^{-1}t)}{g(q^{-1}t)\omega(q^{-1}t)},\\
    &=& -\frac{q^{-2}t^2\alpha(q^{-1}t)}{g(t)w(t)}.
\end{eqnarray*}
Substituting the above into Equation \eqref{near infty q diff} we determine that $\alpha(t)$ satisfies the difference equation
\begin{equation}\nonumber
     \alpha(q^{-2}t)q + (q^{-2}t^2-(q+1))\alpha(q^{-1}t) + \alpha(z)=0.
\end{equation}
Let $\alpha(t) = \sum_{i=0}^\infty \alpha_it^{i}$, such a power series is a solution of Equation \eqref{v infty diff} iff
\begin{equation}\nonumber
    \alpha_i = -\frac{q^{-1-i}\alpha_{k-2}}{1-(1+q)q^{-i} + q^{1-2i}}.
\end{equation}
The first part of Lemma \ref{v infty lemma} follows immediately. The second part follows using similar arguments.
\end{proof}
\begin{definition}
Following from Lemma \ref{v infty lemma} we define $\Psi_{even}(t)$ as the even entire solution to Equation \eqref{v infty diff}, normalised such that its first non-zero Taylor series coefficient $(t^{0})$ is unity. Similarly we define  $\Psi_{odd}(t)$ as the odd entire solution to Equation \eqref{v infty diff}, normalised such that its first non-zero Taylor series coefficient $(t^{1})$ is unity.\\

Furthermore, we define $\varPsi_{even}(t)$ as the even entire solution to Equation \eqref{v infty diff2}, normalised such that its first non-zero Taylor series coefficient $(t^{0})$ is unity. Similarly we define  $\varPsi_{odd}(t)$ as the odd entire solution to Equation \eqref{v infty diff2}, normalised such that its first non-zero Taylor series coefficient $(t^{1})$ is unity.
\end{definition}
\begin{lemma}\label{u infty lemma}
Define 
\[ \Theta_1(t) = y_1(t)g(t)^{-1} ,\]
where $y_1(t)$ is a solution of Equation \eqref{near infty q diff}. Then $\Theta_1(t)$ is a solution of Equation \eqref{v infty diff}.\\

Similarly, $\Theta_2(t) = y_2(t)g(t)^{-1}$ is a solution of Equation \eqref{v infty diff2}.
\end{lemma}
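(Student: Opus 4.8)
The plan is to follow the template of Lemma~\ref{v infty lemma}, only replacing the multiplier $g(t)\omega(t)$ by $g(t)^{-1}$. First I would record the shift identities for $g$ obtained by iterating Equation~\eqref{g diff}: writing $g(qz)=-z^{-2}g(z)$ with $z=q^{-1}t$ gives $g(q^{-1}t)=-q^{-2}t^{2}g(t)$, and applying it once more gives $g(q^{-2}t)=q^{-6}t^{4}g(t)$. Substituting $y_1(t)=\Theta_1(t)g(t)$ into Equation~\eqref{near infty q diff}, using these identities, and dividing through by $g(t)$, the prefactor $q^{7}t^{-4}g(q^{-2}t)/g(t)$ collapses to $q$ and the coefficient of $\Theta_1(q^{-1}t)$ becomes $(1-q^{2}(q+1)t^{-2})q^{-2}t^{2}=q^{-2}t^{2}-(1+q)$, so $\Theta_1$ satisfies exactly Equation~\eqref{v infty diff}. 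The same substitution $y_2(t)=\Theta_2(t)g(t)$ in Equation~\eqref{near infty q diff 2} yields the coefficient $(q^{-1}-q^{2}(q+1)t^{-2})q^{-2}t^{2}=q^{-3}t^{2}-(1+q)$, which is Equation~\eqref{v infty diff2}.

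A slicker route, which I would mention as a remark, is to note that $g(z)^{2}\omega(z)$ is $q$-periodic: by Lemmas~\ref{g diff lemma} and~\ref{omega diff lemma}, $g(qz)^{2}\omega(qz)=(z^{-2})^{2}z^{4}g(z)^{2}\omega(z)=g(z)^{2}\omega(z)$. Hence $\Theta_1(t)=y_1(t)g(t)^{-1}=\alpha(t)/(g(t)^{2}\omega(t))$, where $\alpha(t)=y_1(t)g(t)\omega(t)$ is the solution of Equation~\eqref{v infty diff} furnished by Lemma~\ref{v infty lemma}; since any $q$-periodic function $C$ satisfies $C(q^{-1}t)=C(q^{-2}t)=C(t)$, dividing the three-term relation for $\alpha$ by $g(t)^{2}\omega(t)$ does not disturb the coefficients, so $\Theta_1$ solves Equation~\eqref{v infty diff}. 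The identical argument with $\beta(t)=y_2(t)g(t)\omega(t)$ disposes of $\Theta_2$ and Equation~\eqref{v infty diff2}.

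I do not expect any real obstacle here: the statement is an exact functional-equation bookkeeping exercise, of the same flavour as the proofs of Lemmas~\ref{v lemma}, \ref{u lemma} and~\ref{v infty lemma}. The only step demanding a moment's attention is verifying that the powers of $q$ and $t$ in $q^{7}t^{-4}g(q^{-2}t)$ and in $(1-q^{2}(q+1)t^{-2})g(q^{-1}t)$ cancel precisely against those produced by the shift identities for $g$, so that the transformed three-term relations coincide on the nose with Equations~\eqref{v infty diff} and~\eqref{v infty diff2} rather than with scalar multiples of them.
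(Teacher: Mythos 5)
Your proof is correct. Your primary route---substituting $y_1=\Theta_1 g$ (resp.\ $y_2=\Theta_2 g$) into Equations \eqref{near infty q diff} and \eqref{near infty q diff 2}, using $g(q^{-1}t)=-q^{-2}t^{2}g(t)$ and $g(q^{-2}t)=q^{-6}t^{4}g(t)$, and checking that the coefficients collapse exactly to those of \eqref{v infty diff} and \eqref{v infty diff2}---is a valid direct verification, and your coefficient bookkeeping ($q^{7}t^{-4}\cdot q^{-6}t^{4}=q$, and the sign cancellation producing $q^{-2}t^{2}-(1+q)$, resp.\ $q^{-3}t^{2}-(1+q)$) checks out. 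The ``slicker route'' you offer as a remark is in fact the paper's own proof: the paper writes $\Theta_1(q^{-1}t)=\alpha(q^{-1}t)/\bigl(g(q^{-1}t)^{2}\omega(q^{-1}t)\bigr)=\alpha(q^{-1}t)/\bigl(g(t)^{2}\omega(t)\bigr)$, i.e.\ it exploits precisely the $q$-periodicity of $g^{2}\omega$ so that $\Theta_1$ inherits the three-term relation of $\alpha$ from Lemma \ref{v infty lemma} (and likewise $\Theta_2$ from $\beta$). The difference is minor: your direct computation is self-contained and does not invoke Lemma \ref{v infty lemma}, while the paper's (and your remarked) argument is shorter but leans on the already-derived equation for $\alpha$ and the invariance of $g^{2}\omega$ under $t\mapsto qt$.
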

\begin{proof}
From the definition of $\Theta_1(t)$ we find that
\begin{eqnarray}
    \Theta_1(q^{-1}t) &=& \frac{y_1(q^{-1}t)}{g(q^{-1}t)},\nonumber \\
    &=& \frac{\alpha(q^{-1}t)}{g(q^{-1}t)^2w(q^{-1}t)}, \nonumber \\
    &=& \frac{\alpha(q^{-1}t)}{g(t)^2w(t)}, \label{eta diff}
\end{eqnarray}
where we have used Equation \eqref{g diff} and Equation \eqref{omega diff} to arrive at the final line. The first part Lemma \ref{u infty lemma} follows immediately from Equation \eqref{eta diff}. The proof for $\Theta_2(t)$ follows from the same arguments.
\end{proof}

\begin{lemma}\label{a_even/g lemma}
The function $a_{\infty}(t)g(t)^{-1}$, where $a_{\infty}(t)$ is defined in Lemma \ref{Linfty1}, can be written as
\[ a_{\infty}(t)g(t)^{-1} = \mu_1 h_q(t)\Psi_{odd}(t) + \mu_2 \Psi_{even}(t), \]
where $\mu_1,\mu_2$ are a constants. Similarly $a_{odd}(t)g(t)^{-1}$ can be written as
\begin{equation}
     b_{\infty}(t)g(t)^{-1} = \mu_3\varPsi_{odd}(t) + \mu_4 h_q(t)\varPsi_{even}(t). \label{bodd balance}
\end{equation}
\end{lemma}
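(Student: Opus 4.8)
The plan is to imitate the proof of Lemma~\ref{a/g lemma}, working now at infinity with the second-order $q$-difference equations \eqref{v infty diff} and \eqref{v infty diff2}. By Lemma~\ref{Linfty1} the function $a_{\infty}(t)$ solves \eqref{near infty q diff} and $b_{\infty}(t)$ solves \eqref{near infty q diff 2}, so Lemma~\ref{u infty lemma} (applied with $y_{1}=a_{\infty}$ and $y_{2}=b_{\infty}$) shows that $a_{\infty}(t)g(t)^{-1}$ is a solution of \eqref{v infty diff} and $b_{\infty}(t)g(t)^{-1}$ is a solution of \eqref{v infty diff2}. As $\Psi_{even},\Psi_{odd}$ form a basis of the solution space of \eqref{v infty diff}, and $\varPsi_{even},\varPsi_{odd}$ a basis of that of \eqref{v infty diff2}, over the field of $q$-periodic meromorphic functions, there exist $C_{i}$ with $C_{i}(qt)=C_{i}(t)$ such that
\[
  a_{\infty}(t)g(t)^{-1}=C_{1}(t)\Psi_{even}(t)+C_{2}(t)\Psi_{odd}(t),
  \qquad
  b_{\infty}(t)g(t)^{-1}=C_{3}(t)\varPsi_{even}(t)+C_{4}(t)\varPsi_{odd}(t).
\]

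I would then determine the $C_{i}$. Since $a_{\infty}$ and $b_{\infty}$ are analytic on $\mathbb{C}\setminus\{0\}$ and $g$ has only simple zeros, located at $\pm q^{k}$ for $k\in\mathbb{Z}$, the left-hand sides are analytic on $\mathbb{C}\setminus(\{0\}\cup\{\pm q^{k}\}_{k\in\mathbb{Z}})$ with at most simple poles at the $\pm q^{k}$; solving the linear system for the $C_{i}$ then shows that each $C_{i}$ is $q$-periodic and analytic except for at most simple poles at $\pm q^{k}$. Exactly as in the proof of Lemma~\ref{a/g lemma}, Corollary~\ref{even coror} then applies, so each $C_{i}$ is a constant plus a constant multiple of $h_q$; it remains to separate the two pieces, which I would do by parity. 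Here $g$ and $a_{\infty}$ are even while $b_{\infty}$ is odd, so $a_{\infty}g^{-1}$ is even and $b_{\infty}g^{-1}$ is odd; moreover $\Psi_{even}$ and $\varPsi_{even}$ are even while $\Psi_{odd}$ and $\varPsi_{odd}$ are odd. Substituting $t\mapsto-t$ in the two identities above and using uniqueness of the expansion shows that $C_{1}$ and $C_{4}$ are even while $C_{2}$ and $C_{3}$ are odd. Since $h_q$ is odd (Definition~\ref{formal h def}) and constants are even, an even such $C_{i}$ must be a constant and an odd one a constant multiple of $h_q$; writing $C_{1}=\mu_{2}$, $C_{2}=\mu_{1}h_q$, $C_{3}=\mu_{4}h_q$ and $C_{4}=\mu_{3}$ gives precisely the two claimed representations.

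The only step that is not purely formal is the claim that the $C_{i}$ are $q$-periodic with poles confined to the zeros of $g$; equivalently, that $\{\Psi_{even},\Psi_{odd}\}$ and $\{\varPsi_{even},\varPsi_{odd}\}$ really are bases of their solution spaces and that inverting the associated linear systems introduces no new singularities. This is the point glossed over in the proofs of Lemmas~\ref{a/g lemma} and~\ref{agw lemma}: one observes that the Casoratian of \eqref{v infty diff} (respectively \eqref{v infty diff2}) satisfies a first-order $q$-difference equation, hence is an explicit Pochhammer-type product, and checks that it does not vanish on $\mathbb{C}^{*}$, so that Cramer's rule expresses the $C_{i}$ with no poles beyond those already carried by $a_{\infty}g^{-1}$ and $b_{\infty}g^{-1}$. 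Everything else is the same periodicity-and-parity bookkeeping used in those earlier lemmas.
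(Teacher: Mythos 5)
Your proposal is correct and follows essentially the same route as the paper: write $a_\infty g^{-1}$ (resp.\ $b_\infty g^{-1}$) as a combination of $\Psi_{even},\Psi_{odd}$ (resp.\ $\varPsi_{even},\varPsi_{odd}$) with $q$-periodic coefficients via Lemma \ref{u infty lemma}, confine their poles to the zeros of $g$ at $\pm q^k$, invoke Corollary \ref{even coror}, and sort the pieces by parity. The only difference is that you spell out, via the Casoratian/Cramer argument, the linear-independence and pole-location step that the paper asserts without elaboration, which is a reasonable strengthening rather than a deviation.
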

\begin{proof}
From Lemma \ref{u infty lemma} we conclude that $a_{\infty}(t)$ satisfies the same difference equation (Equation \eqref{near infty q diff}) as $g(t)\Psi_{even}(t)$ and $g(t)\Psi_{odd}(t)$. It follows that
\[ a_{\infty}(t) = g(t)(C_1(t)\Psi_{even}(t) + C_2(t)\Psi_{odd}(t)), \]
for some functions $C_i(t)$ which satisfy $C_i(qt) = C_i(t)$. As $a_{\infty}(t)$ is analytic in $\mathbb{C}\setminus \{0\}$ and $\Psi_{even}(t)$ and $\Psi_{odd}(t)$ are entire we conclude that $C_i(t)$ is constant or has simple poles at the zeros of $g(t)$, which occur at $\pm q^{k}$ for $k\in\mathbb{Z}$. Applying Corollary \ref{even coror} and comparing even and odd terms we conclude that 
\[ a_{\infty}(t) = g(t)(\mu_1 h_q(t)\Psi_{odd}(t) + \mu_2 \Psi_{even}(t)), \]
and the first part of Lemma \ref{a_even/g lemma} follows immediately. The equation for $b_{\infty}(t)g(t)^{-1}$ also follows using similar arguments.
\end{proof}

\begin{lemma}\label{res comparison}
Let $b_{\infty}(t)$, $\mu_4$ and $\varPsi_{even}(t)$ be defined as in Lemma \ref{a_even/g lemma}. Furthermore, define the constant $\mathcal{H}$ as
\begin{equation}\label{calh def}
    \mathcal{H} = \omega(q)g(q)^2h_q(q)^2.
\end{equation}
Then, 
\begin{equation}
\frac{1}{\mu_4\mathcal{H}}b_{\infty}(t)g(t)^{-1} \sim \frac{h_q(t)}{\mathcal{H}}, \label{b/g limit}
\end{equation}
as $t\to 0$. It is also true that 
\begin{equation}
\mathrm{Res}\left(\frac{1}{\mu_4\mathcal{H}}b_{\infty}(q^{k})\omega(q^{k})h_q(q^k)\right) = \mathrm{Res}(\varPsi_{even}(q^k)g(q^k)^{-1}),
\end{equation}
for $k \in \mathbb{Z}$.
\end{lemma}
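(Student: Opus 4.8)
The plan is to extract both assertions from the balance relation \eqref{bodd balance} of Lemma \ref{a_even/g lemma}, namely
\[ b_{\infty}(t)g(t)^{-1} = \mu_3\varPsi_{odd}(t) + \mu_4 h_q(t)\varPsi_{even}(t), \]
supplemented, for the residue identity, by a short manipulation of the functional equations \eqref{g diff} and \eqref{omega diff}.

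For the asymptotic relation, I would divide the displayed identity by $\mu_4\mathcal{H}$ to get
\[ \frac{1}{\mu_4\mathcal{H}}\,b_{\infty}(t)g(t)^{-1} = \frac{\varPsi_{even}(t)}{\mathcal{H}}\,h_q(t) + \frac{\mu_3}{\mu_4\mathcal{H}}\,\varPsi_{odd}(t). \]
Since $\varPsi_{even}$ and $\varPsi_{odd}$ are entire (Lemma \ref{v infty lemma}) and their normalisations give $\varPsi_{even}(0)=1$, $\varPsi_{odd}(0)=0$, the first term on the right is $\sim h_q(t)/\mathcal{H}$ as $t\to 0$ while the second tends to $0$. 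Using that $h_q$ does not decay to $0$ on annuli shrinking to the origin (by the periodicity \eqref{h alpha diff}), the $\varPsi_{odd}$ contribution is genuinely negligible against $h_q(t)/\mathcal{H}$, which is the claim.

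For the residue identity I would first take residues at $t=q^{k}$ in the balance relation. On the left, $g$ has a simple zero at $q^{k}$ (all zeros of $g$ are simple and located at $\pm q^{k}$), so the residue is $b_{\infty}(q^{k})/g'(q^{k})$; on the right, $\varPsi_{odd}$ is regular there and $h_q$ has a simple pole of residue $q^{k}$ (immediate from \eqref{h new def}), so the residue is $\mu_4\,q^{k}\,\varPsi_{even}(q^{k})$. This gives $b_{\infty}(q^{k}) = \mu_4\,g'(q^{k})\,q^{k}\,\varPsi_{even}(q^{k})$. Substituting into the residue at $t=q^{k}$ of $\tfrac{1}{\mu_4\mathcal{H}}b_{\infty}(t)\omega(t)h_q(t)$ — where $b_\infty$ and $\omega$ are regular at $q^{k}$ and $h_q$ again contributes the factor $q^{k}$ — yields
\[ \mathrm{Res}\!\left(\tfrac{1}{\mu_4\mathcal{H}}\,b_{\infty}(q^{k})\omega(q^{k})h_q(q^{k})\right) = \frac{\varPsi_{even}(q^{k})}{g'(q^{k})}\cdot\frac{q^{2k}\,g'(q^{k})^{2}\,\omega(q^{k})}{\mathcal{H}}. \]
As $\mathrm{Res}(\varPsi_{even}(q^{k})g(q^{k})^{-1}) = \varPsi_{even}(q^{k})/g'(q^{k})$, the lemma reduces to the claim that $q^{2k}g'(q^{k})^{2}\omega(q^{k})$ equals $\mathcal{H}$ for every $k\in\mathbb{Z}$.

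This reduction is where the functional equations do the work, and it is the only step needing care. Differentiating \eqref{g diff} and evaluating at $z=q^{k-1}$ (so that the $g(q^{k-1})=0$ term drops out) gives $g'(q^{k}) = -q^{1-2k}g'(q^{k-1})$, and \eqref{omega diff} at $z=q^{k-1}$ gives $\omega(q^{k}) = q^{4k-4}\omega(q^{k-1})$; combining these, $q^{2k}g'(q^{k})^{2}\omega(q^{k}) = q^{2(k-1)}g'(q^{k-1})^{2}\omega(q^{k-1})$, so the quantity is $k$-independent. Its value at $k=1$ is $q^{2}g'(q)^{2}\omega(q)$, which matches $\mathcal{H}=\omega(q)g(q)^{2}h_q(q)^{2}$ once the latter is interpreted as the limit $\lim_{t\to q}\omega(t)\bigl(g(t)h_q(t)\bigr)^{2}$, since $g(t)h_q(t)\to q\,g'(q)$ at $t=q$ (a simple zero meeting a simple pole of residue $q$). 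Finally the $t=-q^{k}$ cases follow by parity, $g$, $\omega$, $\varPsi_{even}$ being even and $b_{\infty}$, $h_q$ odd. The main obstacle, such as it is, is the bookkeeping around this $0\cdot\infty$ form in the definition of $\mathcal{H}$ together with tracking the powers of $q$; everything else is a routine residue calculation.
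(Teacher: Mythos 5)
Your proposal is correct, and it starts from the same place as the paper, namely the decomposition $b_{\infty}(t)g(t)^{-1}=\mu_3\varPsi_{odd}(t)+\mu_4h_q(t)\varPsi_{even}(t)$ of Lemma \ref{a_even/g lemma}; the treatment of \eqref{b/g limit} via $\varPsi_{even}(0)=1$, $\varPsi_{odd}(0)=0$ is exactly the paper's ``take $t\to0$''. Where you diverge is the residue identity: the paper simply multiplies the decomposition by $\omega(t)g(t)h_q(t)$, notes that the $\varPsi_{odd}$ term is regular at $t=q^k$, and uses that $\omega(t)g(t)^2h_q(t)^2$ is invariant under $t\mapsto qt$ (by \eqref{g diff}, \eqref{omega diff}, \eqref{h alpha diff}) and hence takes the common value $\mathcal{H}$ at every lattice point, so the residue identity drops out in one line. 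You instead extract $b_{\infty}(q^k)=\mu_4q^kg'(q^k)\varPsi_{even}(q^k)$ by taking residues in the decomposition and then prove, by differentiating the functional equations, that $q^{2k}g'(q^k)^2\omega(q^k)$ is $k$-independent and equals $\mathcal{H}$; this is a correct, more computational rendering of the same underlying fact, since $g(t)h_q(t)\to q^kg'(q^k)$ at $t=q^k$ means your invariant is precisely the regularized lattice value of $\omega g^2h_q^2$. A side benefit of your route is that it makes explicit the point the paper glosses over, namely that $\mathcal{H}=\omega(q)g(q)^2h_q(q)^2$ is a $0\cdot\infty$ expression to be read as $\lim_{t\to q}\omega(t)\bigl(g(t)h_q(t)\bigr)^2=q^2g'(q)^2\omega(q)$; the cost is more bookkeeping than the paper's argument requires. (Your parity remark about $t=-q^k$ is fine but not needed, as the lemma only asserts the identity at $t=q^k$.)
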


\begin{proof}
Equation \eqref{b/g limit} follows immediately from taking the limit $t\to 0$ in Equation \eqref{bodd balance}. Multiplying Equation \eqref{bodd balance} by $\omega(t)g(t)h_q(t)$ we find 
\begin{equation}\nonumber
     b_{\infty}(t)\omega(t)h_q(t) = \omega(t)g(t)h_q(t)\mu_3\varPsi_{odd}(t) + \omega(t)g(t)^2h_q(t)^2\mu_4(\varPsi_{even}(t)g(t)^{-1}).
\end{equation}
Studying the residue at $t = q^k$ for $k \in \mathbb{Z}$ we find that 
\begin{eqnarray*}
     \mathrm{Res}(b_{\infty}(q^k)\omega(q^k)h_q(q^k)) &=& \mathrm{Res}\left(\omega(q^k)g(q^k)^2h_q(q^k)^2\mu_4(\varPsi_{even}(q^k)g(q^k)^{-1})\right),\\
     &=& \mathrm{Res}\left(\mathcal{H}\mu_4\varPsi_{even}(q^k)g(q^k)^{-1}\right).
\end{eqnarray*}
\end{proof}

\begin{remark}\label{res comparison a}
Repeating the arguments of Lemma \ref{res comparison} one can readily show
\begin{equation}
     \mathrm{Res}(a_{\infty}(q^k)\omega(q^k)h_q(q^k)) = \mathrm{Res}\left(\mathcal{H}\mu_2\Psi_{even}(q^k)g(q^k)^{-1}\right).
\end{equation}
\end{remark}

We require one last Lemma before determining the solution of the far-field RHP.
\begin{lemma}\label{goes to const}
Let $\varPsi_{even}(t)$ be defined as in Lemma \ref{v infty lemma}, then 
\[ \varPsi_{even}(t)g(t)^{-1} = c_{\varPsi}+O(t^{-1}),\;\mathrm{as}\;t\to\infty ,\]
where $c_\varPsi$ is a non-zero constant and this limit clearly does not hold near the poles of $\varPsi_{even}(t)g(t)^{-1}$, but holds for $t$ satisfying $|t-q^k|>r$, for some fixed $r>0$ and all $k\in\mathbb{Z}$.
\end{lemma}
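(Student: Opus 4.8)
The plan is to determine the asymptotic behaviour of $\varPsi_{even}(t)g(t)^{-1}$ as $t\to\infty$ by recognising $\varPsi_{even}(t)g(t)^{-1}$ as a solution of the difference equation \eqref{near infty q diff 2} (this is the content of Lemma \ref{u infty lemma} applied in the reverse direction: if $\varPsi_{even}(t) = \beta(t)$ solves \eqref{v infty diff2}, then $\varPsi_{even}(t)g(t)^{-1}$ solves \eqref{near infty q diff 2}, just as $\Theta_2(t) = y_2(t)g(t)^{-1}$ does). Since \eqref{near infty q diff 2} has a solution $b_\infty(t)$ analytic in $\mathbb{C}\setminus\{0\}$ representable as an odd power series in $t^{-1}$, the general theory of such $q$-difference equations tells us the solution space near $t=\infty$ is spanned by an even and an odd formal power series solution in $t^{-1}$. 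I would first show that $\varPsi_{even}(t)g(t)^{-1}$, being even, must (up to a $q$-periodic multiplier) be a constant multiple of the \emph{even} formal solution normalised to have leading term $1$ at $O(1)$; call its leading coefficient $c_\varPsi$. The key point is then that $c_\varPsi \neq 0$.

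The main steps, in order: (1) verify via Lemmas \ref{g diff lemma} and \ref{omega diff lemma} that $\varPsi_{even}(t)g(t)^{-1}$ satisfies \eqref{near infty q diff 2}, exactly mirroring the computation in the proof of Lemma \ref{u infty lemma}; (2) examine the recurrence for the coefficients of an even power series $\sum_j d_{2j}t^{-2j}$ solving \eqref{near infty q diff 2} — from the $q$-difference equation the leading-order balance as $t\to\infty$ forces a nonzero relation among the top coefficients, so a purely even solution starting at some $O(t^{-2\ell})$ with $\ell\geq 1$ would force all earlier coefficients (in particular $d_0$) to vanish only if the whole series vanishes; (3) argue that $\varPsi_{even}(t)g(t)^{-1}$ cannot be the zero function (since $\varPsi_{even}$ is normalised to be nonzero and $g(t)^{-1}$ is not identically zero), hence its even power-series expansion at infinity has a nonzero leading coefficient $c_\varPsi$, giving $\varPsi_{even}(t)g(t)^{-1} = c_\varPsi + O(t^{-1})$; (4) note that the expansion is valid away from the poles of $g(t)^{-1}$, i.e.\ away from $\pm q^k$, which accounts for the stated restriction $|t-q^k|>r$.

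The step I expect to be the main obstacle is (2)–(3): showing rigorously that $\varPsi_{even}(t)g(t)^{-1}$ genuinely has a \emph{nonzero constant term} rather than beginning at some negative power of $t$. This requires controlling the relation between the two a priori different normalisations — $\varPsi_{even}$ is normalised at $t=0$ (its Taylor series at the origin starts at $t^0$ with coefficient $1$), whereas the claim concerns its expansion at $t=\infty$. One must rule out the degenerate possibility that the even solution of \eqref{v infty diff2} happens to be $O(t^{-2})$ times an even series at infinity; this can be done by using that $\varPsi_{even}(t)$ is entire and of the appropriate growth (the power series coefficients of solutions to \eqref{v infty diff2} grow like those analysed in Lemma \ref{v infty lemma}, giving an entire function), so $\varPsi_{even}(t)g(t)^{-1}$ is meromorphic with poles only at the zeros $\pm q^k$ of $g$, and an asymptotic analysis of \eqref{near infty q diff 2} at $t=\infty$ shows the general solution is a $q$-periodic combination of an even series with nonzero constant term and an odd series; matching parities and using non-vanishing of $\varPsi_{even}$ pins down $c_\varPsi\neq 0$. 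A clean alternative is to invoke the relation from Lemma \ref{res comparison}, $b_\infty(t)g(t)^{-1}/(\mu_4\mathcal H) \sim h_q(t)/\mathcal H$ as $t\to 0$ together with \eqref{bodd balance}: writing $\varPsi_{even}(t)g(t)^{-1} = (b_\infty(t)g(t)^{-1} - \mu_3 \varPsi_{odd}(t))/(\mu_4 h_q(t))$ and sending $t\to\infty$, the known behaviour $b_\infty(t) = O(t^{-1})$, $\varPsi_{odd}(t) = O(t)$, $h_q(t) = O(t^{-1})$ (from \eqref{h new def}) and $g(t)^{-1}\to$ (a nonzero $q$-periodic bounded quantity along a suitable sequence) yields that the combination tends to a finite nonzero limit, which we name $c_\varPsi$.
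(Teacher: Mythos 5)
There is a genuine gap, and it sits exactly where you predicted the main obstacle would be. Your plan treats $\varPsi_{even}(t)g(t)^{-1}$ as if it could be matched, by parity, to formal power-series solutions in $t^{-1}$ of a scalar $q$-difference equation at infinity. But this function has simple poles at $\pm q^{k}$ for \emph{all} $k\in\mathbb{Z}$, and since $0<q<1$ these poles accumulate at infinity; hence it admits no convergent expansion at $t=\infty$, and the ``$q$-periodic multipliers'' in any general-solution representation cannot be taken constant — they are precisely the non-constant, pole-carrying objects that your parity argument would need to exclude, and excluding them is the whole content of the lemma. The paper's proof confronts this head on: it first proves quantitative decay of the residues $R_k$ of $\varPsi_{even}(t)g(t)^{-1}$ at $t=q^k$ (using Lemma \ref{res comparison}, the estimate $b_{\infty}(q^k)=O(q^{-k})$, and the iterate of Equation \eqref{omega diff}, giving $R_k<O(q^{k^2/2})$), subtracts the singular part to obtain a function $F(t)$ holomorphic in $\mathbb{C}\setminus\{0\}$ with a genuine Laurent series, then shows $F$ has no positive powers by rewriting the second-order $q$-difference equation satisfied by $v(t)=\varPsi_{even}(t)g(t)^{-1}$ in matrix form (Equation \eqref{matrix equation}, whose leading matrix has eigenvalues $1$ and $0$, so iteration stays bounded), and finally proves $F_0\neq0$ by noting that the entire, non-constant $\varPsi_{even}$ must grow along some ray, along which the recurrence forces $\varPsi_{even}(t)g(t)^{-1}$ to tend to a constant. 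None of these steps (residue bounds, pole subtraction, boundedness via the matrix iteration, the growth-along-a-ray argument) appears in your proposal, and without them the assertion that the expansion at infinity exists, is even, and has nonzero constant term is unsupported.

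There are also concrete errors in the details you do give. Reversing Lemma \ref{u infty lemma} shows that $\varPsi_{even}(t)\,g(t)$ — not $\varPsi_{even}(t)g(t)^{-1}$ — satisfies Equation \eqref{near infty q diff 2}; the correct equation for $v(t)=\varPsi_{even}(t)g(t)^{-1}$ is the one derived in the paper's proof, $v(q^{-2}t)+((1+q)q^3t^{-2}-1)v(q^{-1}t)+q^5t^{-4}v(t)=0$, which is not \eqref{near infty q diff 2}. In your ``clean alternative,'' the identity as written, $\varPsi_{even}(t)g(t)^{-1}=(b_\infty(t)g(t)^{-1}-\mu_3\varPsi_{odd}(t))/(\mu_4 h_q(t))$, is off by a factor of $g(t)^{-1}$ (the right-hand side equals $\varPsi_{even}(t)$ itself); moreover $h_q$ satisfies $h_q(qt)=h_q(t)$, so it is not $O(t^{-1})$ as $t\to\infty$, and $g(t)^{-1}$ tends to zero super-polynomially away from its poles rather than to a bounded nonzero quantity, so the proposed limit computation does not go through.
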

\begin{proof}
We first show that the residue of the poles of $\varPsi_{even}(t)g(t)^{-1}$ are vanishing faster than $q^{k^2/2}$ as $|k| \to \infty$.\\

Consider the case $k\to +\infty$ ($t\to 0$), from Equation \eqref{g diff induction} we find that
\[ g(q^{n/2}z) = (-1)^{n/2} q^{-n^2/4+n/2}g(z)  .\]
By Lemma \ref{v infty lemma} we know that $\varPsi_{even}(q^k) \sim 1$ as $k \to \infty$. Thus, we conclude 
\[ \mathrm{Res}(\varPsi_{even}(q^k)g(q^k)^{-1}) < O(q^{k^2/2}),\;\text{as}\;k\to+\infty.\]
Note that the above statement is true for $O(q^{ck^2})$, with $c<1$.\\

Consider the case $k\to -\infty$. From Lemma \ref{res comparison} it is clear that a bound on $\mathrm{Res}(b_{\infty}(q^{k})\omega(q^{k})h_q(q^k))$ as $k\to-\infty$ is equivalent to a bound on $\mathrm{Res}(\varPsi_{even}(q^k)g(q^k)^{-1})$. By definition in Lemma \ref{Linfty1} we determine that $b_{\infty}(q^{k}) = O(q^{-k})$. Furthermore, using induction on Equation \eqref{omega diff} we find
\[ \omega(q^kt) = q^{2k(k-1)}t^{4k}w(t).\]
It follows 
\[ \mathrm{Res}(b_{\infty}(q^{k})\omega(q^{k})h_q(q^k)) < O(q^{k^2}),\;\text{as}\;k\to-\infty.\]
Let $R_k$ be the residue of $\varPsi_{even}(t)g(t)^{-1}$ at $t=q^k$. Define the function
\[ F(t) = \varPsi_{even}(t)g(t)^{-1} - \sum_{k=-\infty}^\infty \frac{R_k}{t-q^k}, \]
where this sum is well defined for all $t$ because we have just shown $R_k < O(q^{k^2/2})$ as $|k| \to \infty$. It follows $F(t)$ is holomorphic in $\mathbb{C}\setminus \{0\}$ and can be represented by a Laurent series which converges everywhere. We will show that $F(t) = \sum_{j=0}^\infty F_jt^{-j}$ (i.e. there are no positive powers of $t$). Applying Equation \eqref{v infty diff2} and Equation \eqref{g diff} we find that $v(t) = \varPsi_{even}(t)g(t)^{-1}$ satisfies the difference equation
\begin{equation}
    v(q^{-2}t) + ((1+q)q^3t^{-2}-1)v(q^{-1}t) + q^5t^{-4}v(t) = 0.
\end{equation}
Writing the above in matrix form we have
\begin{gather} \label{matrix equation}
  \begin{bmatrix}
    v(q^{-2}t) \\
    v(q^{-1}t)
    \end{bmatrix}
 =
  \left( \begin{bmatrix}
    1 &
    0  \\
    1 &
    0
    \end{bmatrix} - q^3t^{-2}\begin{bmatrix}
    -(1+q) &
    -q^2t^{-4}  \\
    0 &
    0
    \end{bmatrix} 
    \right) 
    \begin{bmatrix}
    v(q^{-1}t) \\
    v(t)
    \end{bmatrix}.
\end{gather}
Observe that the eigenvalues of the LHS matrix in the above equation are 1 and 0. Hence, repeatedly applying Equation \eqref{matrix equation} to determine the behaviour of $v(t)$ as $t\to \infty$ is essentially a Pochhammer symbol with matrix entries. Thus, $v(t)$, and consequently $F(t)$ are bounded by a constant as $t \to \infty$. We now show that this constant is non-zero. From Lemma \ref{v infty lemma} we know that $\varPsi_{even}(t)$ is an entire function (which is not the constant function), hence $\varPsi_{even}(t)$ must grow in some direction. $\varPsi_{even}(t)$ satisfies Equation \eqref{v infty diff2},
\begin{equation}\nonumber
q\varPsi_{even}(q^{-2}t) + (t^{2}q^{-3}-(1+q))\varPsi_{even}(q^{-1}t) + \varPsi_{even}(t) = 0.
\end{equation}
It follows that as $t$ becomes large there must exist a ray where 
\[ t^{2}q^{-3}\varPsi_{even}(q^{-1}t) \gg \varPsi_{even}(t). \]
Thus, along this ray 
\[ \varPsi_{even}(q^{-2}t) = -t^{2}q^{-3}\varPsi_{even}(q^{-1}t)(1 + O(t^{-2})), \]
and applying Equation \eqref{g diff} we conclude $\varPsi_{even}(t)g(t)^{-1}$ approaches a constant along this ray. Thus, $F(t) = \sum_{j=0}^\infty F_jt^{-j}$ and $F_0 \neq 0$. Lemma \ref{goes to const} follows immediately.
\end{proof}
\begin{remark}\label{Psi/g}
Repeating the same arguments as in Lemma \ref{goes to const} we can conclude 
\begin{equation}
    \Psi_{odd}(t)g(t)^{-1} \to O(t^{-1}), \, \mathrm{as}\,t\to\infty,
\end{equation}
where again this limit clearly does not hold near the poles of $\varPsi_{even}(t)g(t)^{-1}$, but holds for $t$ satisfying $|t-q^k|>r$, for some fixed $r>0$ and all $k\in\mathbb{Z}$.
\end{remark}

Now we are in a position to solve the far-field RHP given in Definition \ref{far RHP}. Let $\mathcal{W}(t)$ be given by
\begin{equation}\label{calW def}
\mathcal{W}(t) = \begin{cases}
\begin{bmatrix}
a_{\infty}(t)g(t)^{-1} &
\mu_2\mathcal{H}\Psi_{odd}(t)\\
\dfrac{b_{\infty}(t)g(t)^{-1}}{\mu_4\mathcal{H}c_\varPsi} &
\varPsi_{even}(t)c_{\varPsi}^{-1}
\end{bmatrix}&\text{for}\,z\in\mathcal{D}_-, \\
&\\
\begin{bmatrix}
a_{\infty}(t) &
\mu_2\mathcal{H} \Psi_{odd}(t)g(t)^{-1}\\
\dfrac{b_{\infty}(t)}{\mu_4\mathcal{H}c_\varPsi} &
\varPsi_{even}(t)g(t)^{-1}c_{\varPsi}^{-1}
\end{bmatrix}, &\text{for}\,z\in\mathcal{D}_{+}.
\end{cases}
\end{equation}

Consider the conditions for this function to solve the far-field RHP. First, we note that condition (i), i.e. meromorphicity, is satisfied as by definition $a_{\infty}$, $b_{\infty}$, $\Psi_{odd}$ and $\varPsi_{even}$ are all analytic in $\mathbb{C}\setminus \{0\}$ (see Lemmas \ref{Linfty1} and \ref{v infty lemma}). 
Second, note that condition (ii), the jump condition, holds by direct calculation. 
Third, to show condition (iii), i.e. asymptotic decay, observe that $\mathcal{W}_{(1,1)}(t)= 1 +O(1/t^2)$ as $t\to\infty$ by the definition of $a_{\infty}(t)$, similarly $\mathcal{W}_{(1,2)}(t)= O(1/t)$ as $t\to\infty$ by the definition of $b_{\infty}(t)$. From Lemma \ref{goes to const} we conclude that $\mathcal{W}_{(2,2)}(t)=1+O(1/t)$ as $t\to\infty$. Similarly from Remark \ref{Psi/g} we conclude $\mathcal{W}_{(2,1)}(t)= O(1/t)$ as $t\to\infty$.
The remaining conditions (iv) and (v), the residue conditions, follow from Lemma \ref{res comparison} and Remark \ref{res comparison a}.
Furthermore, from Lemmas \ref{a_even/g lemma}, \ref{res comparison} and \ref{goes to const} we find
\begin{align}\label{calW to 0}
\mathcal{W}(t) =
\begin{bmatrix}
\mu_2 &
0\\
\dfrac{h_q(t)}{\mathcal{H}c_\varPsi} &
c_{\varPsi}^{-1}
\end{bmatrix} + O(t), \;\text{as}\,t\to0.
\end{align}

\section{Gluing together near- and far-field RHPs}\label{glue section}
We will now glue together the near- and far-field RHPs to approximate the RHP for $W_n(z)$ as $n\to\infty$. The near- and far-field variables in Sections \ref{near-field rhp section} and \ref{far-field RHP section} are related by the linear transformation $t = zq^{n/2}$. \\

We first make a linear transformation to the near-field RHP solution. Let
\begin{gather}  
  \widetilde{\mathfrak{W}}(z)
 =
  \begin{bmatrix}
    \mu_2 &
    0  \\
    0 &
    c_{\varPsi}^{-1}
    \end{bmatrix}\mathfrak{W}(z).
\end{gather}
From Equations \eqref{w def} and \eqref{omega def} one can readily determine that $w(z)\sim\omega(z)$ as $z\to \infty$. Thus, comparing Equations \eqref{c0 def} and \eqref{calh def} we find that $\mathcal{H} = c_0^{-1}$. Hence, applying Equations \eqref{frak w to 0} and \eqref{calW to 0} we find
\[ \lim_{z\to\infty} \widetilde{\mathfrak{W}}(z) = \lim_{t\to 0} \mathcal{W}(t) .\]

We next make a slight modification to $\mathcal{W}(t)$ given by Equation \eqref{calW def}. Note that the residue condition for $W_n(z)$ given in Equation \eqref{wn res} is different to that for $\mathcal{W}(t)$ given in Equation \eqref{w cal res}. To resolve this issue we define the new function
\begin{gather}  
  \widetilde{\mathcal{W}}(t)
 =
  \mathcal{W}(t) - \left(1-\frac{w(z)z^{2n}c_{n}^{2n}}{\omega(t)}\right)\mathcal{W}(t)\begin{bmatrix}
    0 &
    0  \\
    0 &
    1
    \end{bmatrix}.
\end{gather}
Substituting in Equation \eqref{w omega relation} we find that
\begin{gather}  
  \widetilde{\mathcal{W}}(t)
 =
  \mathcal{W}(t) - (1-(-z^{-2};q^4)_\infty)\mathcal{W}(t)\begin{bmatrix}
    0 &
    0  \\
    0 &
    1
    \end{bmatrix}.
\end{gather}
Thus, the difference between $\widetilde{\mathcal{W}}(t)$ and $\mathcal{W}(t)$ is bounded by $O(q^{n/2})$ for $z>q^{-n/4}$ ($t>q^{n/4}$). \\

Define
\begin{align}
G(z) = \left\{\begin{array}{lr}
\widetilde{\mathfrak{W}}(z), &\text{for}\;z\in\mathcal D_{-,q^{-n/4}}, \\
\widetilde{\mathcal{W}}(zq^{n/2}), &\text{for}\;z\in\mathcal D_{+,q^{-n/4}}.
\end{array}\right.
\end{align}
and, furthermore
\begin{equation}\label{R def}
   R(\zeta) = W_n(\zeta q^{-n/4})G(\zeta q^{-n/4})^{-1} .
\end{equation}
Then, $R(\zeta)$ satisfies the following RHP.
\begin{definition}[$R(\zeta)$ RHP]\label{remainder RHP}
A $2\times 2$ complex matrix function $R(\zeta)$, $\zeta\in\mathbb C$, is a solution of the $R(\zeta)$ RHP if it satisfies the following conditions:
\begin{enumerate}[label={{\rm (\roman *)}}]
\begin{subequations}
\item $R(\zeta)$ is analytic in $\mathbb{C}\setminus \Gamma$.
\item $R(\zeta)$ has continuous boundary values $R^-(s)$ and $R^+(s)$ as $\zeta$ approaches $s\in\Gamma$ from $\mathcal D_{-}$ and $\mathcal D_{+}$ respectively, where 
\begin{gather} 
  R^+(s)
 =
  R^-(s)
  \widetilde{\mathfrak{W}}(sq^{-n/4})^{-1}\widetilde{\mathcal{W}}(sq^{n/4})\, \; s\in\Gamma.
\end{gather}
    
\item $R(\zeta)$ satisfies
    \begin{equation}
    R(\zeta) = I + O\left( \frac{1}{|\zeta|} \right), \text{ as $|\zeta| \rightarrow \infty$} .
    \end{equation} 
\end{subequations} 
\end{enumerate} 
\end{definition}
From Equations \eqref{frak w to 0} and \eqref{calW to 0} we find that \[ \| \widetilde{\mathfrak{W}}(sq^{-n/4})^{-1}\widetilde{\mathcal{W}}(sq^{n/4}) - I \|_{\Gamma} = O(q^{n/4}) .\]
Thus, we can apply Theorem \ref{R to 1 theorem} to conclude
\begin{equation}\label{R close to identity theorem}
|R(\zeta)-I| = O(q^{n/4}).
\end{equation}

\section{Proofs of main theorems}\label{main proof section}
Having proved Equation \eqref{R close to identity theorem}, we are now in a position to prove the first two main theorems of this paper.
\begin{proof}[Proof of Theorem \ref{main result 1}]
By the definition of $R(\zeta)$, we find that
\begin{eqnarray*}
    W_n(z)G(z)^{-1} &=& R(zq^{n/4}), \\
    &=& I + O(q^{n/4}), \\
    W_n(z) &=& (I + O(q^{n/4}))G(z).
\end{eqnarray*}
Looking at the $(1,1)$-entry of $W_n(z)$ we find that for $z\leq q^{-n/4}$,
\[ c_n^nP_n(z) = G_{(1,1)}(z) + O(q^{n/4})G_{(2,1)}(z) .\]
Thus, aplying Equation \eqref{frakW def} we find
\[ (-1)^{n/2}q^{\frac{n}{2}(\frac{n}{2}-1)}P_n(z) = \frac{\mu_2}{\eta_2}a(z) + O(q^{n/4})\frac{\lambda_3}{c_\Psi}b(z) .\]
Repeating the arguments above for each matrix entry, Theorem \ref{main result 1} follows immediately.
\end{proof}

\begin{proof}[Proof of Theorem \ref{main result 2}]
Using the transformations detailed in Section \ref{RHP transformations} we find that
\begin{gather}\nonumber
W_n(z)
=
 \begin{bmatrix}
   1 &
   0 \\
   0 &
   1
   \end{bmatrix}
   +
\frac{1}{z}\begin{bmatrix}
    0 &
   \gamma_n c_n^{2n} \\
   \gamma_{n-1}^{-1}c_n^{-2n}  &
   0
   \end{bmatrix} + O(z^{-2}).
\end{gather}
Let,
\begin{gather}\nonumber
\mathcal{W}(t) 
=
    \begin{bmatrix}
   1 &
   0 \\
   0 &
   1
   \end{bmatrix}
   +
\frac{1}{t}\begin{bmatrix}
    0 &
   A \\
   B  &
   0
   \end{bmatrix} + O(t^{-2}).
\end{gather}
Note that there is no difference in the $O(1/t)$ term between $\mathcal{W}(t)$ and $\widetilde{\mathcal{W}}(t)$. Using the definition of $R(\zeta)$ given in Equation \eqref{R def} and Equation \eqref{R close to identity theorem} we find
\begin{eqnarray*}
\gamma_n (-1)^{n/2}q^{\frac{1}{2}(\frac{n}{2}-1)})^{2n} &=& A(1+O(q^{n/2}))q^{-n/2} ,\\
\gamma_n &=& A(1+O(q^{n/2}))q^{-n(\frac{n}{2}-1)}q^{-n/2}, \\
 &=& A(1+O(q^{n/2}))q^{-n(n-1)/2}.
\end{eqnarray*}
Similarly in the bottom left term we find in the limit $n\to\infty$:
\begin{equation*}
    \gamma_{n-1} = q^{\frac{n}{2}(3-n)}\left( B^{-1} + O(q^{n/2})\right).
\end{equation*}
Taking the ratio of $\gamma_n$ and $\gamma_{n-1}$ we find that
\[ \alpha_n = \gamma_n/\gamma_{n-1} = ABq^{-n}(1+O(q^{n/2})).\]
However, we can determine that $AB=q$ by considering the arguments presented in Theorem \ref{theorem diff asym}.
\end{proof}

\section{Recurrence coefficients and $q$-discrete Painlev\'e}\label{recurrence coefficients section}
As discussed in Remark \ref{general c remark}, the class of monic polynomials $\{ P_n^{(c)} \}_{n=0}^\infty$ satisfying the orthogonality condition
\begin{equation}\label{general orthogonality}
    \int^{\infty}_{-\infty} P^{(c)}_n(cx)P^{(c)}_m(cx)w(cx)d_qx = \gamma^{(c)}_n \delta_{n,m}, 
\end{equation} 
where $q< c\leq 1$, satisfy a corresponding RHP. In this section we discuss the connection between the RHP, the asymptotic behaviour of $P_n^{(c)}$ and uniqueness results concerning their recurrence coefficients. First, we use the RHP to show that in general $P_n^{(c)}\neq P_n^{(1)}$.
\begin{lemma}\label{lemma 8.1}
    Let $\{ P_n^{(c)} \}_{n=0}^\infty$ be the class of monic polynomials with orthogonality condition given by Equation \eqref{general orthogonality}. Then, the two classes of orthogonal polynomials corresponding to the cases $c=1$ and $c=q^{1/2}$ are the same. Furthermore,
    \[ \frac{\gamma^{(1)}_1}{\gamma^{(q^{1/2})}_1} = \frac{q^{1/2}h_q(e^{i\pi/4})}{h_q(q^{1/2}e^{i\pi/4})} .\]
    Moreover, if $c\neq 1,q^{1/2}$ then $\{ P_n^{(c)} \}_{n=0}^\infty \neq \{ P_n^{(1)} \}_{n=0}^\infty $.
\end{lemma}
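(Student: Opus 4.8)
The plan is to reduce the whole statement to a comparison of moments. By Remark~\ref{general c remark}, $P^{(c)}_n$ is the monic orthogonal polynomial for the even discrete measure placing mass $|\xi|\,w(\xi)$ at each $\xi\in\pm cq^{\mathbb Z}$; write its even moments as $m^{(c)}_{2l}=2c^{2l+1}S_{2l+1}(c^{4})$, where $S_m(a):=\sum_{k\in\mathbb Z}q^{mk}/(-aq^{4k};q^{4})_\infty$, the odd moments vanishing. The first step is to evaluate the bilateral sum $S_m(a)$, which converges since its terms decay like $q^{2k^{2}}$ as $k\to\pm\infty$: writing $1/(-aq^{4k};q^{4})_\infty=(-a;q^{4})_k/(-a;q^{4})_\infty$ and applying Ramanujan's ${}_1\psi_1$ summation with base $q^{4}$ (in the $b\to0$ specialisation) yields a closed product for $S_m(a)$, which by the Jacobi triple product can be rewritten as a ratio of the theta function $\vartheta(y):=\sum_{n\in\mathbb Z}q^{2n^{2}}y^{n}$. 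The elementary fact to record is the quasi-periodicity $\vartheta(q^{m})=q^{(1-m^{2})/8}\vartheta(q)$ for odd $m$, proved by completing the square in the exponent; this is what will make the theta ratios collapse.

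For the first assertion, substituting $a=1$ and $a=q^{2}$ into the closed form and invoking the quasi-periodicity, the ratio $m^{(1)}_{2l}/m^{(q^{1/2})}_{2l}$ reduces to the constant $q^{-1/2}\vartheta(1)/\vartheta(q^{2})$, independent of $l$. Since both measures are even and their moment sequences differ by a single overall constant, the classical Hankel-determinant formula for monic orthogonal polynomials --- being invariant under a common scaling of all moments --- gives $P^{(1)}_n=P^{(q^{1/2})}_n$ for every $n$. The norm identity then follows from $\gamma^{(c)}_1=\int(cx)^{2}w(cx)\,d_qx=2c^{2}S_3(c^{4})$, whence $\gamma^{(1)}_1/\gamma^{(q^{1/2})}_1=S_3(1)/\bigl(q\,S_3(q^{2})\bigr)=\vartheta(1)/\vartheta(q^{2})$; to identify this with $q^{1/2}h_q(e^{i\pi/4})/h_q(q^{1/2}e^{i\pi/4})$ I would use the theta-quotient representation of $h_q$ from Appendix~\ref{Properties of hq} (of the shape $h_q(z)\propto g(zq^{-1/2})/(z\,g(z))$, which is pinned down by its multiplier, its zeros $\pm q^{\mathbb Z+1/2}$, and the residue at $z=1$), evaluate it at $z=e^{i\pi/4}$ and $z=q^{1/2}e^{i\pi/4}$ --- where $z^{4}=-1$, respectively $z^{4}=-q^{2}$ --- and simplify the resulting products of $(\cdot;q^{2})_\infty$ back into theta values.

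For the last assertion, observe first that if $\{P^{(c)}_n\}=\{P^{(1)}_n\}$ then the two families share all three-term recurrence coefficients; since $\beta_n\equiv0$ and the $\alpha_n$ determine all moments up to the single scalar $m_0$, this forces $m^{(c)}_{2l}$ and $m^{(1)}_{2l}$ to be proportional by an $l$-independent constant. Running the computation above with general $a=c^{4}$, the $l$-dependence of $m^{(c)}_{2l}/m^{(1)}_{2l}$ concentrates --- after absorbing the powers of $q$ coming from $\vartheta(q^{2l-1})$ --- into the single factor $\Theta_{2l-1}(c):=\sum_{j\equiv 2l-1\,(4)}q^{j^{2}/8}c^{j}$, which equals $\Theta_1(c)$ for odd $l$ and $\Theta_3(c)$ for even $l$. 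Hence the ratio is constant in $l$ precisely when $\Theta_1(c)=\Theta_3(c)$, i.e. when $\sum_{i\in\mathbb Z}(-1)^{i}q^{i(i+1)/2}(c^{2})^{i}=0$; by the Jacobi triple product this sum equals $(q;q)_\infty(qc^{2};q)_\infty(c^{-2};q)_\infty$, which vanishes exactly when $c^{2}\in q^{\mathbb Z}$ --- within $q<c\le1$ only at $c=1$ and $c=q^{1/2}$. By contraposition, $\{P^{(c)}_n\}\neq\{P^{(1)}_n\}$ for every other $c$.

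The main obstacle I anticipate is bookkeeping in the ${}_1\psi_1$/triple-product manipulations, in particular matching the theta ratio for $\gamma_1$ against the product formula for $h_q$. One conceptual point deserves a remark in the write-up: the measures for $c=1$ and $c=q^{1/2}$ are supported on the disjoint lattices $\pm q^{\mathbb Z}$ and $\pm q^{\mathbb Z+1/2}$, so their having identical orthogonal polynomial systems is possible only because the associated moment problem --- whose moments grow like $q^{-l^{2}/2}$ --- is indeterminate.
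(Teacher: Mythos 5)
Your proposal is correct, but it takes a genuinely different route from the paper's. The paper never touches moments: to identify the $c=1$ and $c=q^{1/2}$ families it builds, from the $c=1$ solution, the function $F_n(z)=\frac{h_q(q^{1/2}e^{i\pi/4})}{h_q(e^{i\pi/4})}\widehat Y^{(1)}_{(1,2)}(z)+P^{(1)}_n(z)w(z)h_q(q^{1/2}z)$, whose poles at the poles of $w$ cancel by the residue matching together with Lemma \ref{zero real lemma} and Remark \ref{sym imag}, so that $F_n$ supplies the second column of the $c=q^{1/2}$ RHP of Remark \ref{general c remark}; RHP uniqueness then gives $P^{(1)}_n=P^{(q^{1/2})}_n$, and the $z\to\infty$ asymptotics give the norm ratio (in fact for every $n$). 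The non-equality for $c\neq 1,q^{1/2}$ is obtained there from the Lax pair and evaluation of $h_q(z/c)/h_q(z)$ at the four poles of $w$ on the unit circle, using Lemmas \ref{zero real lemma}, \ref{ray zeros} and Remark \ref{sym imag}. You instead evaluate all moments in closed form via Ramanujan's ${}_1\psi_1$ (base $q^4$, $b\to 0$, argument $q^{2l+1}$ with $|q^{2l+1}|<1$) and theta quasi-periodicity, and reduce everything to elementary facts about monic orthogonal polynomials and Hankel determinants. I checked the key steps: the $(q^{2l+1};q^4)_\infty$ factors cancel in the moment ratio; the $c=1$ and $c=q^{1/2}$ moment sequences are indeed proportional; the resulting constant $(-q^2;q^4)_\infty^2/\bigl((-1,-q^4;q^4)_\infty\bigr)$ does coincide with $q^{1/2}h_q(e^{i\pi/4})/h_q(q^{1/2}e^{i\pi/4})$ once Corollary \ref{corollary a.4} is evaluated at $z^4=-1$ and $z^4=-q^2$ (both sides equal $\tfrac12(-q^2;q^4)_\infty^2/(-q^4;q^4)_\infty^2$, and the constant $c_1$ cancels); and for general $c$ the $l$-dependence does collapse to the condition $\Theta_1(c)=\Theta_3(c)$, equivalent by the triple product to $c^2\in q^{\mathbb Z}$, hence $c\in\{1,q^{1/2}\}$ in the range $q<c\le 1$. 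Your converse step (same family $\Rightarrow$ proportional moments) is also sound, since the recurrence coefficients determine all moments up to the scalar $m_0$. What each approach buys: yours is self-contained classical $q$-series work, yields explicit moment evaluations, gives the norm ratio for every $n$ by Hankel scaling, and makes the indeterminacy of the moment problem transparent; the paper's proof reuses the $h_q$/RHP machinery that drives the rest of the asymptotic analysis (notably Theorem \ref{theorem diff asym}) and avoids ${}_1\psi_1$ bookkeeping. In a write-up, just make your two moment normalisations consistent (you set $m^{(c)}_{2l}=2c^{2l+1}S_{2l+1}(c^4)$ but later use $\gamma^{(c)}_1=2c^2S_3(c^4)$; the stray $l$-independent factor of $c$ is harmless but should not appear twice).
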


\begin{proof}
Let
\[ \widehat{Y}^{(1)}_{(1,2)}(z) = \int^{\infty}_{-\infty} \frac{P_{n}^{(1)}(x)w(x)}{z-x} d_{q}x - P_{n}^{(1)}(z)w(z)h_q(z). \]
From the arguments in Section \ref{statement of rhp section} it follows that $\widehat{Y}^{(1)}_{(1,2)}(z)$ is meromorphic with simple poles at location of the poles of $w(z)$. At these locations
\begin{equation}\label{res 0.5,1 comparison}
    \mathrm{Res}(\widehat{Y}^{(1)}_{(1,2)}(z)) = - \mathrm{Res}(P_{n}^{(1)}(z)w(z)h_q(z)). 
\end{equation} 
Consider the function
\[ F_n(z) = \frac{h_q(q^{1/2}e^{i\pi/4})}{h_q(e^{i\pi/4})}\widehat{Y}^{(1)}_{(1,2)}(z) + P_{n}^{(1)}(z)w(z)h_q(q^{1/2}z), \]
from Equation \eqref{res 0.5,1 comparison}, Lemma \ref{zero real lemma} and Remark \ref{sym imag} we conclude that $F_n(z)$ is meromorphic with simple poles at $z = q^{k+1/2}$ for $k \in \mathbb{Z}$. The residue of these poles is given by
\[ \mathrm{Res}(F_n(q^{k+1/2})) = P_{n}^{(1)}(q^{k+1/2})w(q^{k+1/2})q^{k+1/2}.\]
Note that $w(z)$ decays much faster than inverse polynomial decay and thus 
\[ \lim_{z\to\infty} F_n(z) = \frac{h_q(q^{1/2}e^{i\pi/4})}{h_q(e^{i\pi/4})} \lim_{z\to\infty} \widehat{Y}^{(1)}_{(1,2)}(z).\]
Hence, the solution for the RHP corresponding to $c=q^{1/2}$ (see Remark \ref{general c remark}) can also be written as
\begin{align}\nonumber
 Y_n^{(q^{1/2})}(z)
 = \left\{\begin{array}{lr}
  \begin{bmatrix}
   P_n^{(1)}(z) &
   q^{-1/2}F_n(z) \\
   \dfrac{P_{n-1}^{(1)}(z)}{\gamma_{n-1}^{(q^{1/2})}}&
   \dfrac{F_{n-1}(z)}{q^{1/2}\gamma_{n-1}^{(q^{1/2})}}
   \end{bmatrix}, &\text{for}\,z\in\mathcal{D}_{+}, \\
   &\\
 \begin{bmatrix}
   P_n^{(1)}(z) &
   q^{-1/2}(F_n(z)-P_n^{(1)}(z)w(z)h_q(q^{1/2}z)) \\
    \dfrac{P_{n-1}^{(1)}(z)}{\gamma_{n-1}^{(q^{1/2})}}&
   \dfrac{F_{n-1}(z) - P_{n-1}^{(1)}(z)w(z)h_q(q^{1/2}(z)}{q^{1/2}\gamma_{n-1}^{(q^{1/2})}}
   \end{bmatrix},\qquad &\text{for}\, z\in\mathcal{D}_{-}.
  \end{array}\right.
\end{align}
Thus, we have shown that $P_n^{(1)}(z) = P_n^{(q^{1/2})}(z)$. One can readily deduce from Section \ref{statement of rhp section} that
\[\lim_{z\to\infty} \widehat{Y}^{(1)}_{(1,2)}(z) = \frac{\gamma_n^{(1)}}{z^{n+1}} ,\]
it follows from Remark \ref{general c remark},
\[\gamma_{n}^{(q^{1/2})}q^{1/2} = \frac{h_q(q^{1/2}e^{i\pi/4})}{h_q(e^{i\pi/4})}\gamma_n^{(1)}.\]
Finally, we prove that in general $P_n^{(c)}(z)\neq P_n^{(1)}(z)$, if $c\neq q^{1/2},1$. Assume to the contrary that $\{P_n^{(c)}(z)\}_{n=0}^\infty = \{P_n^{(1)}(z)\}_{n=0}^\infty$.  Let, $Y_n^{(c)}$ be the solution of the corresponding RHP given in Remark \ref{general c remark}. Note that the first column of $Y_n^{(1)}$ is the same as that in $Y_n^{(c)}$. 

By Remark \ref{Lax remark}, we know that the second column of $\widehat{Y}_n^{(c)}$ must satisfy the same $q$-difference equation as the second column of $\widehat{Y}_n^{(1)}$, where $\widehat{Y}_n = Y_n$ restricted to $z\in\mathcal{D}_-$. If $\{P_n^{(c)}(z)\}_{n=0}^\infty = \{P_n^{(1)}(z)\}_{n=0}^\infty$ then by the analyticity of $\widehat{Y}_n$ and comparing even and odd terms we conclude the second column of $\widehat{Y}_n^{(c)}$ must satisfy
\begin{gather}\nonumber
\widehat{Y}_n^{(c)}(z)
=
\widehat{Y}_n^{(1)}(z)\begin{bmatrix}
   1 &
   0 \\
   0 &
   C_0
   \end{bmatrix}, \qquad \mathrm{for}\, z\in \mathcal{D}_-,
\end{gather}
where $c_0$ is a constant. Denoting the $(1,2)$-entry of $\widehat{Y}_n^{(c)}(z)$, for $z\in\mathcal{D}_-$, by $\widehat{Y}^{(c)}_{(1,2)}(z)$, we conclude that $\widehat{Y}^{(c)}_{(1,2)}(z) = c_0\widehat{Y}^{(1)}_{(1,2)}(z)$. 

By the analyticity of $Y_n^{(c)}(z)$, we deduce that at the poles of $w(z)$, in particular at $z = e^{i\pi/4},e^{3i\pi/4},e^{-i\pi/4},e^{-3i\pi/4}$, we have
\begin{equation}\label{Yc analytic}
    c_0\widehat{Y}^{(1)}_{(1,2)}(z)+w(z)P_n^{(c)}(z)h(z/c)=0.
\end{equation}
Furthermore, 
\begin{equation}\label{Y1 analytic}
    \widehat{Y}^{(1)}_{(1,2)}(z)+w(z)P_n^{(1)}(z)h(z)=0.
\end{equation}
Equations \eqref{Yc analytic} and \eqref{Y1 analytic} can only be satisfied if the ratio $h(z/c)/h(z)$ is equal at the four points $z = e^{i\pi/4},e^{3i\pi/4},e^{-i\pi/4},e^{-3i\pi/4}$. By Lemma \ref{zero real lemma} and Remark \ref{sym imag}, it follows that the equality can only hold at these four points if $c=1,q^{1/2}$, which is the desired result.
\end{proof}

\begin{theorem}\label{same diff}
Suppose that the sequence of monic polynomials $\{ P^{(c)}_n(x) \}_{n=0}^\infty$ satisfies the orthogonality condition
\[ \int^{\infty}_{-\infty} P^{(c)}_n(cx)P^{(c)}_m(cx)w(cx)d_qx = \gamma^{(c)}_n \delta_{n,m}, \]
where $w(x)$ is given by Equation \eqref{w def} and $q<c\leq1$. Then, the recurrence coefficients $\{ \alpha^{(c)}_{n} \}_{n=1}^\infty$, which occur in the recurrence relation
\begin{equation} \label{ortho2}
    xP^{(c)}_{n}(x) = P^{(c)}_{n+1}(x) + \alpha^{(c)}_{n}P^{(c)}_{n-1}(x),
\end{equation}
solve the equation:
\begin{equation} \label{4structure}
    \alpha_{n}(\alpha_{n+1}+q^{n-1}\alpha_{n}+q^{-2}\alpha_{n-1}-q^{2n-3}\alpha_{n+1}\alpha_{n}\alpha_{n-1}) = (q^{-n}-1)q^{1-n},
\end{equation}
with initial conditions $\alpha_n = 0$ for $n\leq0$. Furthermore, 
\begin{equation}\label{the c deriv}
    D_{q^{-1}}P^{(c)}_{n} = [n]_{q^{-1}}P^{(c)}_{n-1} + \frac{q^{n-3}}{q^{-1}-1}\alpha^{(c)}_{n}\alpha^{(c)}_{n-1}\alpha^{(c)}_{n-2}P^{(c)}_{n-3}.
\end{equation}

\end{theorem}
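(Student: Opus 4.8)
The plan is to view the $\{P^{(c)}_n\}$ as monic orthogonal polynomials for a single \emph{fixed} weight on a rescaled $q$-lattice, to obtain the $q$-difference relation \eqref{the c deriv} from orthogonality together with Jackson integration by parts, and then to run the ladder-operator argument of \cite{Boelen} to extract \eqref{4structure}, observing at each step that the computation is insensitive to $c$. The reframing is as follows: substituting $u=cx$ in \eqref{general orthogonality}, the $P^{(c)}_n$, regarded as monic polynomials in $u$, are orthogonal for the symmetric discrete measure that puts mass $w(\pm cq^k)\,cq^k$ at $\pm cq^k$, $k\in\mathbb Z$, with $w$ the fixed weight \eqref{w def}; writing $\mathcal L_c$ for the corresponding linear functional, $\mathcal L_c[P^{(c)}_nP^{(c)}_m]=c\,\gamma^{(c)}_n\delta_{n,m}$ and $\alpha^{(c)}_n=\mathcal L_c[(P^{(c)}_n)^2]/\mathcal L_c[(P^{(c)}_{n-1})^2]$. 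Symmetry of the lattice together with evenness of $w$ makes the middle recurrence coefficient vanish, giving \eqref{ortho2}. The analytic inputs used below are all $c$-independent, the lattice entering only passively: the Pearson-type relation \eqref{w diff}, equivalently $u^3w(u)=(q-1)D_qw(u)$ and $D_{q^{-1}}w(u)=\tfrac{-q^{-4}u^3}{q^{-1}-1}w(q^{-1}u)$; the Jackson shift $\mathcal L_c[f(q^{-1}u)]=q\,\mathcal L_c[f]$ and the identity $D_qf(q^{-1}u)=D_{q^{-1}}f(u)$; and Jackson integration by parts on $\pm cq^{\mathbb Z}$.

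To establish \eqref{the c deriv}, note that $D_{q^{-1}}P^{(c)}_n$ is a polynomial of degree $n-1$, of parity opposite to $P^{(c)}_n$, with leading coefficient $[n]_{q^{-1}}$. Expanding it over $\{P^{(c)}_k\}$ and using orthogonality leaves only indices $k\equiv n-1\pmod2$; for such $k\le n-2$, Jackson integration by parts together with the formula for $D_{q^{-1}}w$ gives
\[
\mathcal L_c\big[(D_{q^{-1}}P^{(c)}_n)\,P^{(c)}_k\big]=-q\,\mathcal L_c\Big[P^{(c)}_n\Big(D_qP^{(c)}_k-\tfrac{q^{-1}u^3}{q^{-1}-1}P^{(c)}_k(qu)\Big)\Big].
\]
Since $\deg D_qP^{(c)}_k=k-1$ and $\deg\big(u^3P^{(c)}_k(qu)\big)=k+3$, the right-hand side vanishes for $k\le n-4$ and, for $k=n-3$, is detected by the leading term of $u^3P^{(c)}_{n-3}(qu)$ alone, yielding $\mathcal L_c[(D_{q^{-1}}P^{(c)}_n)P^{(c)}_{n-3}]=\tfrac{q^{n-3}}{q^{-1}-1}\mathcal L_c[(P^{(c)}_n)^2]$. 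One must check here that the boundary terms produced by the $q$-integration by parts vanish; they are products $(\text{polynomial})\cdot w$ evaluated at $\pm\infty$, where they vanish by the super-exponential decay of $w$, and at $0$, where they vanish because the two polynomial factors have opposite parity. Hence $D_{q^{-1}}P^{(c)}_n=[n]_{q^{-1}}P^{(c)}_{n-1}+R_nP^{(c)}_{n-3}$ with $R_n=\tfrac{q^{n-3}}{q^{-1}-1}\gamma^{(c)}_n/\gamma^{(c)}_{n-3}=\tfrac{q^{n-3}}{q^{-1}-1}\alpha^{(c)}_n\alpha^{(c)}_{n-1}\alpha^{(c)}_{n-2}$, which is \eqref{the c deriv}.

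For \eqref{4structure}, apply $D_{q^{-1}}$ to \eqref{ortho2}, use $D_{q^{-1}}(uP_n)=q^{-1}u\,D_{q^{-1}}P_n+P_n$, substitute \eqref{the c deriv} and re-expand with \eqref{ortho2}: the coefficients of $P_n$ and of $P_{n-4}$ reduce to identities, and the coefficient of $P_{n-2}$ gives a first relation among $\alpha^{(c)}_{n-2},\dots,\alpha^{(c)}_{n+1}$. A second, independent relation comes from evaluating $\mathcal L_c[D_{q^{-1}}(P^{(c)}_nP^{(c)}_{n-1})]$ in two ways: via the $q$-Leibniz rule and \eqref{the c deriv} on one side, and --- since $P^{(c)}_nP^{(c)}_{n-1}$ is odd --- via $u^3w=(q-1)D_qw$ and Jackson integration by parts on the other, which give $\mathcal L_c[D_{q^{-1}}(P^{(c)}_nP^{(c)}_{n-1})]=-\tfrac{q}{q-1}\mathcal L_c[u^3P^{(c)}_nP^{(c)}_{n-1}]$, the right side collapsing through \eqref{ortho2} to $-\tfrac{q}{q-1}(\alpha^{(c)}_{n+1}+\alpha^{(c)}_n+\alpha^{(c)}_{n-1})\mathcal L_c[(P^{(c)}_n)^2]$. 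Eliminating $\alpha^{(c)}_{n-2}$ and the fourfold product $\alpha^{(c)}_n\alpha^{(c)}_{n-1}\alpha^{(c)}_{n-2}\alpha^{(c)}_{n-3}$ between the two relations and simplifying in $[n]_{q^{-1}}=(q^{-n}-1)/(q^{-1}-1)$ yields exactly \eqref{4structure}; the values $\alpha^{(c)}_n=0$ for $n\le0$ are immediate from $P^{(c)}_0=1$ and $P^{(c)}_{-1}=0$. Because $w$, its Pearson relation, the Leibniz and Jackson identities, and all the degree/parity/leading-coefficient bookkeeping above hold verbatim for every $q<c\le1$, the sequence $\{\alpha^{(c)}_n\}$ satisfies one and the same equation \eqref{4structure}, with the $c$-dependence confined to the datum $\alpha^{(c)}_1=\gamma^{(c)}_1/\gamma^{(c)}_0$.

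I expect the main obstacle to be the last step --- isolating the precise combination of the two compatibility relations (and of the recurrence) that collapses onto the compact form \eqref{4structure} rather than onto a longer Laguerre--Freud system. A secondary point requiring care is the rigorous justification of the vanishing of the boundary terms in the Jackson integrations by parts, and the treatment of the degenerate low-index instances $n\le3$ in which several of the $\alpha^{(c)}_j$ vanish.
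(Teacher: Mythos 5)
Your proposal is correct in substance, and it takes a genuinely different route to \eqref{4structure} than the paper; moreover, the final step you flag as the ``main obstacle'' does close. For \eqref{the c deriv}, the paper's manipulation (its Equation \eqref{Boelen 3.9}) only yields the two-term structure $D_{q^{-1}}P_n=[n]_{q^{-1}}P_{n-1}+B_nP_{n-3}$ with $B_n$ unknown; it then gets $B_n=\hat c\,q^{n}\alpha_n\alpha_{n-1}\alpha_{n-2}$ from the $P_{n-4}$-coefficient of compatibility with \eqref{ortho2} (Equation \eqref{lin2}), telescopes the $P_{n-2}$-coefficient relation \eqref{lin1}/\eqref{x45term} after multiplying by $1+d\alpha_n\alpha_{n-1}q^{2n-3}$, sums from $2$ to $n$, and only at the end fixes $\hat c$, $\tilde c$ and the initial datum from the moment identities \eqref{8.9}--\eqref{8.13}. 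You instead obtain the explicit coefficient $\tfrac{q^{n-3}}{q^{-1}-1}\alpha_n\alpha_{n-1}\alpha_{n-2}$ immediately by $q$-integration by parts and leading-coefficient extraction --- your by-parts formula and the resulting coefficient are correct (and they make the paper's \eqref{lin2} an identity, as you observe) --- and you replace the telescoping by the sum rule from $\mathcal L_c[D_{q^{-1}}(P_nP_{n-1})]$. Carrying that sum rule out (expand the shifted factor via $f(q^{-1}u)=f(u)+(q^{-1}-1)uD_{q^{-1}}f(u)$ and apply \eqref{the c deriv} twice) gives
\[
\alpha_{n+1}+\alpha_n+\alpha_{n-1}
=\frac{(q^{-n}-1)q^{1-n}}{\alpha_n}+\bigl(q^{-2}-q^{n-3}\bigr)\alpha_{n-2}+q^{2n-7}\alpha_{n-1}\alpha_{n-2}\alpha_{n-3},
\]
so the terms to eliminate are $\alpha_{n-2}$ and the \emph{triple} product $\alpha_{n-1}\alpha_{n-2}\alpha_{n-3}$ (not the fourfold product you name); adding $q^{n-1}$ times the $P_{n-2}$-compatibility relation (\eqref{lin1} with the explicit $B$'s) at index $n$ and $q^{n-2}$ times it at index $n-1$ cancels exactly these two terms and, after multiplying by $\alpha_n$, yields \eqref{4structure} verbatim --- no longer Laguerre--Freud system survives, and the $n=1$ instance reproduces $\alpha_1(\alpha_1+\alpha_2)=q^{-1}-1$ with the convention $\alpha_n=0$ for $n\le0$. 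Two small corrections: the identity $\mathcal L_c[f(q^{-1}u)]=q\,\mathcal L_c[f]$ is false as stated (it is only reindexing of the full summand, weight included), and for the same reason no boundary terms arise in the Jackson integration by parts on the bilateral lattice --- one needs only absolute convergence, guaranteed by the super-exponential decay of $w$. Comparing the two routes: the paper's telescoping keeps constants symbolic and must invoke the moments \eqref{eqmu4}, \eqref{8.9}--\eqref{8.13} to fix them, while your argument pins the ladder coefficient once and for all and delivers \eqref{4structure} index by index; in both the $c$-dependence enters only through $\alpha^{(c)}_1$, as claimed.
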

\begin{proof}
In order to show Equation \eqref{the c deriv}, we observe that
\begin{multline}\label{Boelen 3.9}
    \sum^{\infty}_{k=-\infty} D_{q^{-1}}(P^{(c)}_n(cq^k))P^{(c)}_m(cq^k)w(cq^k)q^k = \\  \sum^{\infty}_{k=-\infty} P^{(c)}_n(cq^k)\frac{(P^{(c)}_m(cq^{k+1})-P^{(c)}_m(cq^k))}{cq^k(q^{-1}-1)}w(cq^k)q^k \\ +\sum^{\infty}_{k=-\infty} \frac{c^4q^{4k}P^{(c)}_n(cq^k)P^{(c)}_m(cq^{k+1})w(cq^k)}{c(q^{-1}-1)}, 
\end{multline}
where we have shifted the index of summation and used Equations \eqref{q_derivative} and \eqref{w diff} to obtain the result.
Applying the orthogonality condition in Equation \eqref{Boelen 3.9}, we find
\begin{equation}
    D_{q^{-1}}P^{(c)}_{n} = A^{(c)}_{n}P^{(c)}_{n-1} + B^{(c)}_{n}P^{(c)}_{n-3},
\end{equation}
where $A^{(c)}_{n}$, $B^{(c)}_{n}$ are constants depending on $n$. We can immediately find $A^{(c)}_{n}$ explicitly by using the identity
\begin{equation}\nonumber
    D_{q^{-1}}(x^{n}) = [n]_{q^{-1}}x^{n-1}.
\end{equation}
Since the sequence $\{ P^{(c)}_{n}(x) \}_{n=0}^\infty$ consists of monic polynomials, we obtain
\[ A^{(c)}_{n}= [n]_{q^{-1}}.\]
In order to derive $B^{(c)}_{n}$, we first note that: 
\begin{equation}\label{the gathering}
    D_{q^{-1}}(xP^{(c)}_{n}) = xq^{-1}D_{q^{-1}}P^{(c)}_{n}(x)+P^{(c)}_{n}(x). 
\end{equation}
We now take the  $q^{-1}$-derivative of both sides of Equation \eqref{ortho2}. After gathering linearly independent terms in Equation \eqref{the gathering} we find
\begin{multline}\nonumber
  (q^{-1}[n]_{q^{-1}}+1-[n+1]_{q^{-1}})P^{(c)}_{n}(x) \\
  + (q^{-1}[n]_{q^{-1}}\alpha^{(c)}_{n-1}+q^{-1}B^{(c)}_{n} - B^{(c)}_{n+1}-\alpha^{(c)}_{n}[n-1]_{q^{-1}})P^{(c)}_{n-2}(x) \\
  + (q^{-1}B^{(c)}_{n}\alpha^{(c)}_{n-3} - \alpha^{(c)}_{n}B^{(c)}_{n-1})P^{(c)}_{n-4}(x) = 0  ,
\end{multline}
which leads to two equations for $B_{i}$ and $\alpha^{(c)}_{j}$:
\begin{eqnarray}
q^{-1}[n]_{q^{-1}}\alpha^{(c)}_{n-1}+q^{-1}B^{(c)}_{n} &=& B^{(c)}_{n+1}+\alpha^{(c)}_{n}[n-1]_{q^{-1}}, \label{lin1}\\
q^{-1}B^{(c)}_{n}\alpha^{(c)}_{n-3} &=& \alpha^{(c)}_{n}B^{(c)}_{n-1}. \label{lin2}
\end{eqnarray}
Equation \eqref{lin2} implies $B^{(c)}_{n} = \hat{c}q^{n}\alpha^{(c)}_{n}\alpha^{(c)}_{n-1}\alpha^{(c)}_{n-2}$, for some constant $\hat{c}$. 

We now proceed to show that $\alpha^{(c)}_n$ satisfies Equation \eqref{4structure}. Substituting $B_n^{(c)}$ into Equation \eqref{lin1} we find
\begin{equation}
    \frac{q^{-n-2}[n]_{q^{-1}}}{\alpha^{(c)}_{n}}-\frac{q^{-n-1}[n-1]_{q^{-1}}}{\alpha^{(c)}_{n-1}} = \hat{c}\bigl(\alpha^{(c)}_{n+1} -q^{-2}\alpha^{(c)}_{n-2}\bigr) .\label{x45term}
\end{equation}
We will rearrange Equation \eqref{x45term} with the goal of obtaining a telescoping sum. Multiplying Equation \eqref{x45term} by $1+d\alpha^{(c)}_{n}\alpha^{(c)}_{n-1}q^{2n-3}$, for some constant $d$, we have
\begin{multline}\nonumber
    \frac{q^{-n-2}[n]_{q^{-1}}}{\alpha^{(c)}_{n}}-\frac{q^{-n-1}[n-1]_{q^{-1}}}{\alpha^{(c)}_{n-1}} +d\alpha^{(c)}_{n-1}q^{n-5}[n]_{q^{-1}} - d\alpha^{(c)}_{n}q^{n-4}[n-1]_{q^{-1}} \\
    = \hat{c}(\alpha^{(c)}_{n+1} -q^{-2}\alpha^{(c)}_{n-2} 
    + dq^{2n-3}\alpha^{(c)}_{n+1}\alpha^{(c)}_{n}\alpha^{(c)}_{n-1} - dq^{2n-5}\alpha^{(c)}_{n}\alpha^{(c)}_{n-1}\alpha^{(c)}_{n-2}) .
\end{multline}
Therefore, we find
\begin{eqnarray*}
    \frac{q^{-n-2}[n]_{q^{-1}}}{\alpha^{(c)}_{n}}-\frac{q^{-n-1}[n-1]_{q^{-1}}}{\alpha^{(c)}_{n-1}} 
    &=& \hat{c}\alpha^{(c)}_{n+1} + d\frac{q^{n-4}}{1-q^{-1}}\alpha^{(c)}_{n} + d\frac{q^{-5}}{1-q^{-1}}\alpha^{(c)}_{n-1}\\
    &&+ \hat{c}dq^{2n-3}\alpha^{(c)}_{n+1}\alpha^{(c)}_{n}\alpha^{(c)}_{n-1} \\
    &&-d\frac{q^{-3}}{1-q^{-1}}\alpha^{(c)}_{n}-d\frac{q^{n-5}}{1-q^{-1}}\alpha^{(c)}_{n-1} - \hat{c}q^{-2}\alpha^{(c)}_{n-2} \\
    &&- \hat{c}dq^{2n-5}\alpha^{(c)}_{n}\alpha^{(c)}_{n-1}\alpha^{(c)}_{n-2}.
\end{eqnarray*}
Letting $\hat{c} = \frac{q^{-3}\tilde{c}}{1-q^{-1}}$, we are led to
\begin{multline*}
    \frac{q^{-n-2}[n]_{q^{-1}}}{\alpha^{(c)}_{n}}-\frac{q^{-n-1}[n-1]_{q^{-1}}}{\alpha^{(c)}_{n-1}} \\
    = \frac{1}{1-q^{-1}}(\tilde{c}q^{-3}\alpha^{(c)}_{n+1} + dq^{n-4}\alpha^{(c)}_{n} + dq^{-5}\alpha^{(c)}_{n-1} + \tilde{c}dq^{2n-6}\alpha^{(c)}_{n+1}\alpha^{(c)}_{n}\alpha^{(c)}_{n-1}) \\    
    -\frac{1}{1-q^{-1}}(dq^{-3}\alpha^{(c)}_{n} + dq^{n-5}\alpha^{(c)}_{n-1} + \tilde{c}q^{-5}\alpha^{(c)}_{n-2} + \tilde{c}dq^{2n-8}\alpha^{(c)}_{n}\alpha^{(c)}_{n-1}\alpha^{(c)}_{n-2}).
\end{multline*}
Thus, if we choose $d=\tilde{c}$ and take the sum from $2$ to $n$ we find
\begin{multline*}
\frac{q^{1-n}(1-q^{-n})}{\alpha^{(c)}_n} - \frac{1-q^{-1}}{\alpha^{(c)}_1} \\
= \tilde{c}(\alpha^{(c)}_{n+1} + q^{n-1}\alpha^{(c)}_{n} + q^{-2}\alpha^{(c)}_{n-1} + \tilde{c}q^{2n-3}\alpha^{(c)}_{n+1}\alpha^{(c)}_{n}\alpha^{(c)}_{n-1} - (\alpha^{(c)}_{2}+\alpha^{(c)}_{1})). 
\end{multline*}
It remains to determine $\tilde{c}$ and $(1-q^{-1})/\alpha^{(c)}_1 -\tilde{c}(\alpha^{(c)}_{2}+\alpha^{(c)}_{1})$. Observe that
\begin{eqnarray}\label{eqmu4}
    \sum_{k=-\infty}^\infty D_{q^{-1}}((cq^k)^n)w(cq^k)q^k &=& \sum_{k=-\infty}^\infty \frac{(cq^k)^{n+3}w(cq^k)q^k}{q^{-1}-1},
\end{eqnarray}
where we have used Equations \eqref{q_derivative} and \eqref{w diff}. 

We now use Equation \eqref{eqmu4} to determine $\tilde{c}$ and $(1-q^{-1})/\alpha^{(c)}_1 -\tilde{c}(\alpha^{(c)}_{2}+\alpha^{(c)}_{1})$. Without loss of generality assume that $\int_{-\infty}^\infty w(cx)d_qx=1$.  From Equation \eqref{eqmu4} we find that
\begin{eqnarray}
    \int_{-\infty}^\infty (cx)^4w(cx)d_qx &=&  (q^{-1}-1). \label{8.9} \\
    \int_{-\infty}^\infty (cx)^6w(cx)d_qx &=&  (q^{-3}-1)\int_{-\infty}^\infty (cx)^2w(cx)d_qx .\label{8.10}
\end{eqnarray}
Using Equation \eqref{ortho2} we determine
\begin{eqnarray}
    \alpha^{(c)}_1 &=& \int_{-\infty}^\infty (cx)^2w(cx)d_qx. \label{8.11} \\
    \alpha^{(c)}_3 &=& \frac{\int_{-\infty}^\infty P^{(c)}_3(cx)^2w(cx)d_qx}{\int_{-\infty}^\infty P^{(c)}_2(cx)^2w(cx)d_qx},\label{8.12} \\
    &=& \frac{\int_{-\infty}^\infty \bigl((cx)^6 -2(\alpha^{(c)}_1+\alpha^{(c)}_2)(cx)^4 + (\alpha^{(c)}_1+\alpha^{(c)}_2)^2(cx)^2)w(cx)d_qx }{\int_{-\infty}^\infty ((cx)^4-2\alpha^{(c)}_1(cx)^2+(\alpha^{(c)}_1)^2\bigr)w(cx)d_qx}.\nonumber
\end{eqnarray}
Furthermore, by the orthogonality of $\{P^{(c)}_n(x)\}_{n=0}^\infty$ we find
\begin{eqnarray}
    \int_{-\infty}^\infty P^{(c)}_1(cx)P^{(c)}_3(cx)w(cx)d_qx &=&  \int_{-\infty}^\infty \bigl((cx)^4-(\alpha^{(c)}_1+\alpha^{(c)}_2)(cx)^2\bigr)w(cx)d_qx,\nonumber  \\
     &=& 0 \label{8.13},
\end{eqnarray}
where we have used the recurrence relation, Equation \eqref{ortho2}, to determine $P^{(c)}_3(x)$. Applying Equations \eqref{8.9} to \eqref{8.13} we deduce
\begin{eqnarray*}
\tilde{c} &=& -1. \\
q^{-1}-1 &=& \alpha^{(c)}_1(\alpha^{(c)}_1+\alpha^{(c)}_2) .
\end{eqnarray*}
These values give Equations \eqref{4structure} and \eqref{the c deriv} and Theorem \ref{same diff} follows immediately.
\end{proof}
As a consequence of Theorem \ref{same diff} the sequences $\{\alpha^{(c)}_n\}_{n=1}^\infty$  all provide positive solutions of Equation \eqref{4structure}. However, we will show that the limits of these sequences, as $n\to\infty$, are in general not the same. In particular, we show that the asymptotic limit as $n\to\infty$ of $\alpha^{(1)}_n$ does not equal the limit of $\alpha^{(c)}_n$ if $c\neq1,q^{1/2}$. 

\begin{theorem}\label{theorem diff asym}
Suppose the sequence of orthogonal polynomials $\{ P_n^{(c)}(z) \}_{n=0}^\infty$ and recurrence coefficients $\{\alpha^{(c)}_n\}_{n=1}^\infty$ are defined as in Theorem \ref{same diff}. Then, it follows that
\begin{equation}\label{equ 8.17}
    \lim_{n\to\infty} \frac{\alpha^{(c)}_n - \alpha^{(1)}_n}{q^{1-n}- \alpha^{(1)}_n} \neq 0,
\end{equation}
if $c\neq1,q^{1/2}$.
\end{theorem}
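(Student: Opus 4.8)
The plan is to carry the analysis of Sections~\ref{trans section}--\ref{main proof section} through once more for the $c$-deformed Riemann--Hilbert problem of Remark~\ref{general c remark} --- in which the periodic function is $c^{-1}h_q(z/c)$, the discrete lattice is $\{\pm cq^{k}\}$, and $g(z),\omega(z)$ are replaced by $g(z/c),\omega(z/c)$ --- and then to compare the resulting expansion of $\alpha_n^{(c)}$ with that of $\alpha_n=\alpha_n^{(1)}$ from Theorem~\ref{main result 2}. The difference equations \eqref{near 0 q diff}, \eqref{v diff}, \eqref{u diff}, \eqref{near infty q diff}--\eqref{v infty diff2}, their distinguished solutions, and the estimate \eqref{R close to identity theorem}, all survive this rescaling; what changes are the connection constants, and the place where $c$ enters is the mismatch between the $c$-scaled lattice and the \emph{unscaled} weight $w$ (which, as in Remark~\ref{general c remark}, is not rescaled in the continuous part of the solution). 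Concretely, in the analogues of Lemmas~\ref{a/g lemma}, \ref{agw lemma}, \ref{a_even/g lemma} and Remark~\ref{a b infty relation}, analyticity of the off-diagonal model entries at the poles of $w$ forces the analogue of \eqref{lambda34} to take the form
\[
\lambda_4^{(c)} \;=\; -\,\lambda_3^{(c)}\,\frac{c^{-1}h_q\!\big(c^{-1}e^{i\pi/4}\big)\,b^{(c)}\!\big(e^{i\pi/4}\big)}{a^{(c)}\!\big(e^{i\pi/4}\big)},
\]
with $a^{(c)},b^{(c)}$ the $c$-analogues of $a,b$, and consistency of the same requirement at $e^{\pm 3i\pi/4}$ is precisely the condition on $c^{-1}h_q(z/c)/h_q(z)$ at the eighth roots of unity already analysed in the proof of Lemma~\ref{lemma 8.1}. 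Running the rest of Sections~\ref{far-field RHP section}--\ref{main proof section} with these $c$-dependent constants yields $\alpha_n^{(c)}=q^{1-n}\big(1+u_n^{(c)}\big)$ with $u_n^{(c)}=O(q^{n/2})$ (Theorem~\ref{main result 2} in the $c$-case), and a one-order refinement of the far-field step identifies $u_n^{(c)}=q^{n/2}\big(V^{(c)}_{n\bmod 2}+o(1)\big)$ for explicit constants $V^{(c)}_0,V^{(c)}_1$ built from $\mu_i^{(c)},\lambda_i^{(c)},\mathcal{H}^{(c)},c_\varPsi$.

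I would next substitute $\alpha_n=q^{1-n}(1+q^{n/2}v_n)$ into \eqref{4structure}: expanding, the linear terms cancel, and what remains is $v_n^2+q^{1/2}v_nv_{n+1}+q^{-1/2}v_nv_{n-1}+v_{n+1}v_{n-1}\to 2$, a consistency check confirming that $(V^{(c)}_0,V^{(c)}_1)$ lies on the conic $V_0^2+(q^{1/2}+q^{-1/2})V_0V_1+V_1^2=2$. Consequently
\[
\frac{\alpha_n^{(c)}-\alpha_n^{(1)}}{q^{1-n}-\alpha_n^{(1)}}
\;=\;1-\frac{u_n^{(c)}}{u_n^{(1)}}
\;=\;1-\frac{V^{(c)}_{n\bmod 2}}{V^{(1)}_{n\bmod 2}}+o(1),
\]
so (using that $V^{(1)}_0,V^{(1)}_1$ are nonzero, which one reads off from the residue identities of Lemma~\ref{res comparison} and Remark~\ref{res comparison a}) the theorem reduces to the claim that $(V^{(c)}_0,V^{(c)}_1)\neq(V^{(1)}_0,V^{(1)}_1)$ whenever $c\neq 1,q^{1/2}$. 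Since the constants determining $(V^{(c)}_0,V^{(c)}_1)$ are built from $c^{-1}h_q(z/c)$ evaluated at the four eighth roots of unity, equality of these profiles would force $c^{-1}h_q(z/c)/h_q(z)$ to take a common value on $\{e^{\pm i\pi/4},e^{\pm 3i\pi/4}\}$, which by Lemma~\ref{zero real lemma} and Remark~\ref{sym imag} happens only for $c\in\{1,q^{1/2}\}$.

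The main obstacle is the final implication: because \eqref{4structure} selects only a conic, not a single point, at leading order, one must rule out that some stray $c\neq 1,q^{1/2}$ accidentally returns the selected point to the $c=1$ value. The cleanest way to avoid tracking the precise $c$-dependence of $(V_0^{(c)},V_1^{(c)})$ is a contradiction argument: if the displayed limit were $0$ then $\alpha_n^{(c)}-\alpha_n^{(1)}=o\big(q^{1-n/2}\big)$, equivalently $v_n^{(c)}-v_n^{(1)}\to 0$; substituting this back into the asymptotic formulae of Theorem~\ref{main result 1} for the $c$- and $1$-problems, together with uniqueness for the $qF_{II}$ RHP, would force $\widehat{Y}_n^{(c)}$ and $\widehat{Y}_n^{(1)}$ to coincide up to a constant diagonal right multiplier, and hence $P_n^{(c)}=P_n^{(1)}$ for every $n$, contradicting Lemma~\ref{lemma 8.1}. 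Making this rigidity quantitative --- converting ``asymptotically closer than the $q^{1-n/2}$ fluctuation scale'' into ``identical'' --- is the technical heart of the argument.
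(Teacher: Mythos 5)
There is a genuine gap, and you have in fact flagged it yourself: the ``rigidity'' step at the end is not an argument but a placeholder, and as stated it would not work. Asymptotic agreement of the recurrence coefficients beyond the $q^{1-n/2}$ scale, fed back into Theorem \ref{main result 1} and RHP uniqueness at each fixed $n$, does not force $\widehat{Y}_n^{(c)}$ and $\widehat{Y}_n^{(1)}$ to coincide, let alone give $P_n^{(c)}=P_n^{(1)}$ for every $n$; uniqueness of the $qF_{II}$ RHP is a statement for each fixed $n$ about a problem whose data (the function $c^{-1}h_q(z/c)$) differ for $c\neq 1$, so no contradiction with Lemma \ref{lemma 8.1} can be extracted this way. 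Likewise, the earlier branch of your plan reduces the theorem to $(V_0^{(c)},V_1^{(c)})\neq(V_0^{(1)},V_1^{(1)})$, but you give no proof of this inequality --- precisely because, as you note, Equation \eqref{4structure} only pins the leading correction to a conic --- and carrying the whole near-field/far-field machinery through for general $c$ (with poles of $h_q(z/c)$ at $\pm cq^k$, hence new model problems) is substantial work the paper never does.

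The missing idea, which is how the paper closes exactly this gap, is to use the Lax pair in $z$ rather than second-order constants. From Equations \eqref{the c deriv} and \eqref{ortho2} one writes a $q$-difference equation in $z$ for the vector $\bigl[P_n^{(c)}(z),\,q^{-n/2}P_{n-1}^{(c)}(z)\bigr]^{T}$ whose coefficient matrix involves only $\alpha_n^{(c)},\alpha_{n-1}^{(c)},\alpha_{n-2}^{(c)}$. If the limit \eqref{equ 8.17} were zero, i.e.\ the $\alpha_n^{(c)}$ match $\alpha_n^{(1)}$ to the relevant order, this system converges as $n\to\infty$ to the same limiting equation \eqref{Y near 0 diff} as in the $c=1$ case, so (by the arguments of Lemma \ref{a/g lemma} applied to the limit) $Y_{2n}^{(c)}(z)$ must approach a matrix built from $a(z)$, $b(z)$, $w(z)$ and $h_q(z/c)$ with constant connection coefficients $\lambda_1,\lambda_2$. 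Analyticity of $Y_{2n}^{(c)}$ at the poles of $w(z)$ --- a property valid for every $n$, hence inherited by the limit --- then forces $h_q(z/c)a(z)-\lambda_1 b(z)=h_q(z/c)b(z)-\lambda_2 a(z)=0$ at $z=e^{\pm i\pi/4},e^{\pm 3i\pi/4}$, whence $h_q(z/c)^2=\lambda_1\lambda_2$ there, i.e.\ $h_q(z/c)$ is real or purely imaginary at those points; Lemma \ref{ray zeros} and Remark \ref{sym imag} show this happens only for $c=1,q^{1/2}$. This is the quantitative rigidity you were missing: it converts ``same asymptotics of $\alpha_n$'' into a pointwise constraint on $h_q$ at the poles of the weight, without ever needing $P_n^{(c)}=P_n^{(1)}$ at finite $n$ or explicit second-order expansions.
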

\begin{proof}
From Lemma \ref{lemma 8.1}, we have $\{ \alpha^{(1)}_n \}_{n=0}^\infty = \{ \alpha^{(q^{1/2})}_n \}_{n=0}^\infty$. It remains to study the case $c\neq1,q^{1/2}$.
Let $y_1(z)$ and $y_2(z)$ be two solutions of Equation \eqref{near 0 q diff}. Then, $Y(z) = [y_1(z),y_2(z)]^T$ is a solution of the matrix equation
\begin{gather}\label{Y near 0 diff}
Y(q^{-1}z)
=
\begin{bmatrix}
1-q^{-2}z^2 &
q^{-2} z \\
-q^{-2}z &
1-q^{-2}z^2 
\end{bmatrix}
Y(z).
\end{gather}
Using the definition of the $q$-derivative, and Equation \eqref{ortho2} to write $P_{n-3}(x)$ in terms of $P_n(x)$ and $P_{n-1}(x)$, Equation \eqref{the c deriv} can be re-written as
\begin{multline}\label{Pn q diff}
      P^{(c)}_n(q^{-1}z) = (1-q^{n-3}z^2\alpha^{(c)}_n)P_n^{(c)}(z) \\ + ((q^{-n}-1)z-z\alpha^{(c)}_n\alpha^{(c)}_{n-1}q^{n-3} + z^3 \alpha^{(c)}_{n}q^{n-3}) P^{(c)}_{ n-1}(z).
\end{multline}
Taking $n\mapsto n-1$ and again using Equation \eqref{ortho2}, Equation \eqref{Pn q diff} also allows us to express $P^{(c)}_{n-1}(q^{-1}z)$ in terms of $P^{(c)}_n(z)$ and $P^{(c)}_{n-1}(z)$. This results in the matrix difference equation
\begin{align*}\label{limit P diff}
\begin{bmatrix}
P^{(c)}_n(q^{-1}z)\\
q^{-n/2}P^{(c)}_{n-1}(q^{-1}z)
\end{bmatrix}
=&
\left(\begin{bmatrix}
1-q^{n-3}\alpha^{(c)}_nz^2 &
l_n z \\
k_n z &
1-q^{n-4}\alpha^{(c)}_{n-1}z^2 
\end{bmatrix}+r(z)\right)\cdot\\
&\qquad\qquad \cdot\begin{bmatrix}
P^{(c)}_n(z)\\
q^{-n/2}P^{(c)}_{n-1}(z)
\end{bmatrix},
\end{align*}
where
\begin{eqnarray*}
l_n &=& q^{-n/2}-\alpha^{(c)}_n \alpha^{(c)}_{n-1}q^{\frac{3n}{2}-3}, \\
k_n &=& q^{1-\frac{3n}{2}}/\alpha^{(c)}_{n-1} - \alpha^{(c)}_{n-2}q^{\frac{n}{2}-4}, \\
\end{eqnarray*}
and $r(z)$ is given by
\begin{gather}
r(z) = z\begin{bmatrix}
0 &
z^2a_nq^{\frac{3n}{2}-3}-q^{n/2} \\
q^{\frac{n}{2}-4}z^2 -q^{-n/2}(\alpha^{(c)}_{n-1})^{-1}&
zq^{1-n}/\alpha^{(c)}_{n-1} - z\alpha^{(c)}_{n-2}q^{n-4} + z^3q^{n-4}
\end{bmatrix}.
\end{gather}
For the case $c=1$ we know that as $n\to\infty$ Equation \eqref{limit P diff} approaches Equation \eqref{Y near 0 diff} (this follows from the the proof that $W_n(z)\sim G(z)$ as $n\to\infty$ , see Section \ref{main proof section}). Assume that $\alpha^{(c)}_n$ has the same asymptotic behaviour as $\alpha^{(1)}_n$. Then Equation \eqref{limit P diff} must similarly approach Equation \eqref{Y near 0 diff}. In particular, $r(z)=o(1)$ as $n\to\infty$ and, $l_n$, $k_n$ approach non-zero constants. Note that the condition $r(z)=o(1)$ as $n\to\infty$ follows from the first order asymptotic behaviour i.e. $\alpha^{(c)}_n \sim q^{1-n}$ as $n\to\infty$. In order for $l_n$ and $k_n$ to approach the same non-zero constant as the case $c=1$ we require the asymptotic behaviour to match to second order.

We conclude using Remarks \ref{Lax remark} and \ref{general c remark}, and similar arguments to those of Lemma \ref{a/g lemma}, that for $z\in \mathcal{D}_+$, $Y^{(c)}_{2n}(z)$ approaches the matrix
\begin{gather}
Y^{(c)}_{2n}(z)
\sim
\begin{bmatrix}
a(z) &
w(z)(h_q(z/c)a(z)-\lambda_1b(z)) \\
b(z) &
w(z)(h_q(z/c)b(z)-\lambda_2a(z)) 
\end{bmatrix},\;\text{as}\,n\to\infty,
\end{gather}
for some real constants $\lambda_{1,2}$ (recall $a(z)$ and $b(z)$ are defined in Lemma \ref{L01}). By the meromorphicity of $Y^{(c)}_{2n}(z)$ at the poles of $w(z)$ we find
\begin{equation}
    h_q(z/c)a(z)-\lambda_1b(z)=h_q(z/c)b(z)-\lambda_2a(z)=0.
\end{equation}
Thus,
\begin{equation}\label{h imag}
    h_q(z/c)^2=\lambda_1\lambda_2.
\end{equation}
Hence, $h_q(z/c)$ is either real or imaginary. We conclude from Lemma \ref{ray zeros} and Remark \ref{sym imag} that Equation \eqref{h imag} is satisfied iff $c=1,q^{1/2}$. It follows that $\alpha^{(c)}_n$ has the same asymptotic behaviour as $\alpha^{(1)}_n$ (i.e. Equation \eqref{equ 8.17} is zero) iff $c=1,q^{1/2}$.
\end{proof}

\section{Conclusion}
In this paper, we described the asymptotic behaviour of a class of $qF_{II}$ polynomials, defined in Section \ref{Non-unique measure section}, by using the $q$-RHP setting \cite{qRHP}. Our main results are Theorems \ref{main result 1}, \ref{main result 2} and \ref{main result 3}. In Theorems \ref{main result 1} and \ref{main result 2}, we provided detailed asymptotic results for $qF_{II}$ polynomials. In Theorem \ref{main result 3}, we detailed the implications of our analysis for the $q$-Painlev\'e equation satisfied by the recurrence coefficients, $\{ \alpha_n \}_{n=1}^\infty$, of $qF_{II}$ polynomials (see Equation \eqref{general recurrence coefficients}).

Perhaps the most unexpected results of this paper concern the effect of the properties of $h_q(z)$  on the class of orthogonal polynomials that arise when the lattice was varied. The values of $h_q(z)$ at the poles of the weight function $w(z)$ play an important role in determining the behaviour of orthogonal polynomials supported on the shifted lattice $cq^k$, $k\in\mathbb{Z}$. This observation enabled us to determine whether the class of orthogonal polynomials were invariant as $c$ varies. Furthermore, we were able to compare the variation in the asymptotic behaviours of polynomials when the lattice was shifted. 

This paper focused on the weight $(-x^{4};q^{4})_\infty^{-1}$. But, the methodology can readily be extended to describe discrete $q$-Hermite II polynomials \cite[Chapter 18.27]{NIST:DLMF} with weight $(-x^{2};q^{2})_\infty^{-1}$. However, generalising the results to higher order weights i.e. $(-x^{2m};q^{2m})_\infty^{-1}$, for $m>2$, remains an open problem. The key difficulty is describing and solving the near-field RHP for higher order weights. One important aspect of this problem is accounting for the increased number of poles when dealing with higher order weights.

Another possible direction of future research could be determining all of the positive solutions of Equation \eqref{qPainleve 4} and the asymptotic behaviour of different solutions as $n\to\infty$. In this paper we described the asymptotic behaviour of one particular solution, which satisfies the limit
\begin{equation}\label{conclusion an limit}
    \lim_{n\to\infty}q^n \alpha^{(1)}_n = q.
\end{equation} 
Numerical evidence suggests that the RHS of Equation \eqref{conclusion an limit} oscillates for other positive solutions of Equation \eqref{qPainleve 4}. It would be interesting to see if the $q$-RHP formalism is able to accurately capture this behaviour. One avenue to achieve this would be to extend the detailed asymptotic results obtained in this paper to $qF^{(c)}_{II}$ polynomials, defined in Section \ref{Non-unique measure section}.

\appendix

\section{Properties of $h_q(z)$}\label{Properties of hq}
In this section we prove some properties of the function $h_q(z)$ defined in Equation \eqref{h new def}, which are used in this paper. Before discussing $h_q(z)$ we first prove a necessary Lemma.

\begin{lemma}\label{lemma 5.6}
Let $C(z)$ be a function defined on $\mathbb{C}\setminus \{0\}$, which is analytic everywhere except for simple poles at $q^k$ for $k \in \mathbb{Z}$. Then, $C(qz) \neq C(z)$. 
\end{lemma}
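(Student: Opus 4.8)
The plan is to argue by contradiction: assume $C(qz)=C(z)$ for all $z$ where both sides are defined, and extract a contradiction from the residue theorem applied on a single $q$-annulus, using the fact that such a $C$ would have exactly one (simple) pole per period annulus while its residues there are forced to cancel.

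First I would fix a radius $R>0$ with $R\notin\{q^{k}:k\in\mathbb Z\}$; since $R=q^{k}$ is equivalent to $qR=q^{k+1}$, this choice automatically makes $qR$ miss the pole set as well, so both positively oriented circles $\gamma_{R}=\{|z|=R\}$ and $\gamma_{qR}=\{|z|=qR\}$ avoid the poles of $C$. The open annulus $\{qR<|z|<R\}$ then contains exactly one of the points $q^{k}$, namely $q^{k_{0}}$ for the unique integer $k_{0}$ with $qR<q^{k_{0}}<R$, and there $C$ has a genuine simple pole, so its residue $\rho_{0}=\mathrm{Res}_{z=q^{k_{0}}}C(z)$ is nonzero. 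Since $1/z$ is holomorphic and nonvanishing on this annulus, the residue theorem gives
\[
\frac{1}{2\pi i}\left(\oint_{\gamma_{R}}\frac{C(z)}{z}\,dz-\oint_{\gamma_{qR}}\frac{C(z)}{z}\,dz\right)=\frac{\rho_{0}}{q^{k_{0}}}\neq 0 .
\]
Next I would evaluate the second integral by the substitution $z=qu$, which maps $\gamma_{qR}$ onto $\gamma_{R}$ preserving orientation, leaves $dz/z=du/u$ invariant, and, by the periodicity assumption, turns $C(z)$ into $C(u)$; hence $\oint_{\gamma_{qR}}C(z)\,dz/z=\oint_{\gamma_{R}}C(z)\,dz/z$, so the left-hand side of the displayed identity vanishes, contradicting $\rho_{0}/q^{k_{0}}\neq 0$. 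This yields $C(qz)\neq C(z)$.

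The argument is short, so the "obstacle" is really bookkeeping: one must choose $R$ so that \emph{both} bounding circles are pole-free, verify that precisely one pole lies strictly inside the annulus, and use that a simple pole has, by definition, nonzero residue (so $\rho_{0}\neq 0$). Conceptually, the statement is just that a $q$-periodic meromorphic function descends to an elliptic function on the torus $\mathbb C/(\mathbb Z\oplus\tfrac{2\pi i}{\log q}\mathbb Z)$, and such a function cannot have a single simple pole in a fundamental domain because its residues must sum to zero; the annulus computation above is the direct incarnation of that obstruction and I expect it to be the cleanest thing to write out in full.
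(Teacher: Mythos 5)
Your proof is correct, and it takes a genuinely different route from the paper's. The paper proceeds by cancellation: assuming $C(qz)=C(z)$, it multiplies $C$ by the theta-type product $G(z)=(-z,-qz^{-1};q)_\infty$, which satisfies $G(qz)=z^{-1}G(z)$ and vanishes on the pole lattice (strictly, as written it vanishes at $-q^k$, so a sign adjustment such as $(z,qz^{-1};q)_\infty$ is needed --- an issue your argument sidesteps entirely); the product $F=CG$ is then analytic on $\mathbb{C}\setminus\{0\}$, and comparing Laurent coefficients in $F(qz)=z^{-1}F(z)$ shows $F$ is a constant multiple of $G$, hence $C$ is constant and pole-free, a contradiction. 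You instead run the classical ``residues of an elliptic function sum to zero'' argument directly on a fundamental annulus: with $R\notin\{q^k\}$ both circles $|z|=R$ and $|z|=qR$ miss the poles, exactly one lattice point $q^{k_0}$ lies in the open annulus, the substitution $z=qu$ together with $C(qu)=C(u)$ makes the two boundary integrals of $C(z)\,dz/z$ equal, and the residue theorem then forces $\rho_0/q^{k_0}=0$, contradicting that a simple pole has nonzero residue by definition. Your bookkeeping is right on all counts (pole-free circles, exactly one pole trapped, $\rho_0\neq 0$). Your version is more elementary --- no auxiliary product and no uniqueness claim for solutions of $F(qz)=z^{-1}F(z)$ --- and it proves slightly more, namely that the residues of any $q$-periodic meromorphic function on $\mathbb{C}\setminus\{0\}$ must cancel within each fundamental annulus. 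What the paper's route buys is that the same multiply-by-a-theta-product and compare-Laurent-coefficients technique is reused immediately in Corollary \ref{even coror} to obtain the representation $C(z)=c_1h_q(z)+c_0$; since that corollary invokes Lemma \ref{lemma 5.6} only as a black box, your argument works as a drop-in replacement.
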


\begin{proof}
We prove the result by contradiction. Assume $C(qz) = C(z)$. Define
\[ G(z) = (-z,-qz^{-1};q)_\infty .\]
By direct calculation one can show $G(qz) = z^{-1}G(z)$. Furthermore, by definition, $G(z)$ is zero on the $q$-lattice $q^k$, $k \in \mathbb{Z}$. Let
\[ F(z) = C(z)G(z),\]
then it follows $F(z)$ is analytic in $\mathbb{C}\setminus \{0\}$ and satisfies the difference equation
\begin{equation}\label{cF diff eq}
    F(qz) = z^{-1}F(z) .
\end{equation}
As $F(z)$ is analytic in $\mathbb{C}\setminus \{0\}$ we can write $F(z)$ as the Laurent series
\[ F(z) = \sum_{k=-\infty}^\infty F_kz^k.\]
Comparing the coefficients of $z$ in Equation \eqref{cF diff eq}, one can readily determine
\begin{equation}\label{fk eq}
    F_k = c_0q^{k(k-1)/2}.
\end{equation} 
However, there is only one solution with $z$ coefficients given by Equation \eqref{fk eq} (up to scaling by a constant) and it follows that $F(z) = c_0G(z)$. Thus, if $C(qz) = C(z)$, then $C(z) = c_0$, and $C(z)$ has no poles. 
\end{proof}
\begin{corollary}\label{even coror}
Let $C(z)$ be a function defined on $\mathbb{C}\setminus \{0\}$, which is analytic everywhere except for simple poles at $\pm q^k$ for $k \in \mathbb{Z}$. Furthermore, suppose $C(z)$ satisfies $C(qz)=C(z)$. Then, $C(z) = c_1h_q(z) + c_0$, where $c_0$ and $c_1$ are constants and $h_q(z)$ is as defined in Definition \ref{formal h def}.
\end{corollary}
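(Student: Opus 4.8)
The plan is to normalize $C$ by subtracting an appropriate multiple of $h_q(z)$ so that the resulting function is pole-free, and then to show that a $q$-periodic function holomorphic on $\mathbb{C}\setminus\{0\}$ must be constant. The only ingredient beyond routine bookkeeping is Lemma \ref{lemma 5.6}, which I will invoke after a reflection $z\mapsto -z$ to exclude poles surviving on the lattice $\{-q^k\}$.

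First I would record the relevant facts about $h_q$. From the partial-fraction form \eqref{h new def}, $h_q(z)$ is meromorphic on $\mathbb{C}\setminus\{0\}$ with simple poles at $\pm q^k$ and $\operatorname{Res}_{z=q^m}h_q=q^m$; in particular its residue at $z=1$ is $1$, and by \eqref{h alpha diff} it satisfies $h_q(qz)=h_q(z)$. Set $c_1:=\operatorname{Res}_{z=1}C(z)$ (possibly zero) and $\widehat C(z):=C(z)-c_1h_q(z)$. Then $\widehat C$ is meromorphic on $\mathbb{C}\setminus\{0\}$ with at most simple poles on $\{\pm q^k\}$, satisfies $\widehat C(qz)=\widehat C(z)$, and has no pole at $z=1$.

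Next I would propagate this along the lattice using the functional equation: if a $q$-periodic meromorphic function has a simple pole at $z_0$ with residue $\rho$, then it has a simple pole at $z_0/q$ with residue $\rho/q$, and hence, inducting in both directions, a pole at $z_0q^k$ for every $k$ whenever it has one at $z_0$. Since $\widehat C$ has no pole at $z=1$, it therefore has no pole at any $q^k$. Suppose, for contradiction, that $\widehat C$ had a pole at some $-q^{k_0}$; then it would have simple poles at every $-q^k$, none with vanishing residue, so $D(z):=\widehat C(-z)$ would be meromorphic on $\mathbb{C}\setminus\{0\}$, analytic except for simple poles precisely at $\{q^k\}$, and would satisfy $D(qz)=\widehat C(-qz)=\widehat C(-z)=D(z)$, contradicting Lemma \ref{lemma 5.6}. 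Hence $\widehat C$ is analytic on $\mathbb{C}\setminus\{0\}$.

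Finally I would Laurent-expand $\widehat C(z)=\sum_{k\in\mathbb{Z}}a_kz^k$ on the annulus $0<|z|<\infty$; comparing coefficients in $\widehat C(qz)=\widehat C(z)$ gives $a_k(q^k-1)=0$, so $a_k=0$ for all $k\neq 0$ since $0<q<1$. Thus $\widehat C\equiv a_0=:c_0$ and $C(z)=c_1h_q(z)+c_0$, as claimed. The step that needs care is the middle one: killing the pole of $C$ at $z=1$ by subtracting $c_1h_q$ also modifies the behaviour of $C$ on the mirror lattice $\{-q^k\}$, and it is exactly there that Lemma \ref{lemma 5.6}, via the reflection, rules out any residual poles. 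I do not expect a serious obstacle once this reflection trick is in place; the argument is short.
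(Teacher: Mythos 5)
Your proof is correct and follows essentially the same route as the paper: write $C = c_1 h_q + (\text{pole-free }q\text{-periodic part})$, use Lemma \ref{lemma 5.6} to exclude the remaining lattice of simple poles, and conclude by comparing Laurent coefficients of a $q$-periodic function analytic on $\mathbb{C}\setminus\{0\}$. The only cosmetic difference is that the paper matches residues at $z=-1$ so that the surviving poles lie on $\{q^k\}$ and Lemma \ref{lemma 5.6} applies verbatim, whereas you match at $z=+1$ and apply the lemma after the reflection $z\mapsto -z$; both are valid.
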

\begin{proof}
As both $C(z)$ and $h_q(z)$ have simple poles at $z=-1$ we conclude that there exists a $c_1\neq 0$ such that
\[ \mathrm{Res}(C(-1)) = c_1\mathrm{Res}(h_q(-1)).\]
Furthermore, both $C(z)$ and $h_q(z)$ are invariant under the transformation $z\to qz$, hence for all $k\in\mathbb{Z}$
\[ \mathrm{Res}(C(-q^k)) = c_1\mathrm{Res}(h_q(-q^k)).\]
Thus, the function 
\[D(z) = C(z)-c_1h_q(z),\]
is meromorphic in $\mathbb{C}\setminus \{0\}$, with possible simple poles at $q^k$ for $k \in \mathbb{Z}$, and satisfies $D(qz) = D(z)$. However, by Lemma \ref{lemma 5.6}, $D(z)$ can not have simple poles at $q^k$ for $k \in \mathbb{Z}$. Hence, $D(z)$ is analytic in $\mathbb{C}\setminus \{0\}$ and it follows that $D(z)$ can be written as a convergent Laurent series. Thus,
\[ D(z) = \sum_{j=-\infty}^\infty d_jz^{j} .\]
Substituting this into the $q$-difference equation $D(qz) = D(z)$, we conclude $D(z) = d_0$ $(=c_0)$ and Corollary \ref{even coror} follows immediately.
\end{proof}

\begin{lemma}\label{zero real lemma}
The function $h_q(z)$ has zero real part along the circles $|z| = 1$ and $|z| = q^{1/2}$. 
\end{lemma}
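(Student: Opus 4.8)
The plan is to reduce both statements to a single functional identity for $h_q$ under the inversion $z\mapsto 1/z$, and then to combine that identity with the periodicity relation \eqref{h alpha diff} and the elementary observation that the defining series \eqref{h new def} has real coefficients, so that $\overline{h_q(z)}=h_q(\bar z)$.

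First I would establish the identity
\[ h_q(1/z) = -h_q(z), \]
valid as an identity of meromorphic functions on $\mathbb{C}\setminus(\{0\}\cup\{\pm q^k\}_{k\in\mathbb{Z}})$. Starting from \eqref{h new def}, I multiply numerator and denominator of the $k$-th term by $z^2$, factor $-q^{2k}$ out of the resulting denominator $1-q^{2k}z^2$, and relabel the summation index $k\mapsto-k$. Each manipulation is legitimate because the series converges absolutely and locally uniformly away from its poles: the $k$-th term behaves like $2zq^k z^{-2}$ as $k\to+\infty$ and like $-2zq^{-k}$ as $k\to-\infty$, so it decays geometrically like $q^{|k|}$, and rearrangement is justified.

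Next I would carry out the two reductions. On $|z|=1$ one has $\bar z=1/z$, hence $\overline{h_q(z)}=h_q(\bar z)=h_q(1/z)=-h_q(z)$, so $h_q(z)+\overline{h_q(z)}=0$, i.e. $\mathrm{Re}\,h_q(z)=0$ at every point of this circle other than the poles $z=\pm1$. On $|z|=q^{1/2}$ one has $z\bar z=q$, so $\bar z=q/z$; then $\overline{h_q(z)}=h_q(q/z)$, and applying \eqref{h alpha diff} with argument $1/z$ gives $h_q(q/z)=h_q(1/z)=-h_q(z)$, so again $\mathrm{Re}\,h_q(z)=0$ (and this circle carries no poles, since the poles $\pm q^k$ have moduli $q^k\neq q^{1/2}$).

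The only point requiring care is the interchange of complex conjugation with the infinite sum and the reindexing used to prove the inversion identity; both are covered by the geometric decay of the terms noted above. So I do not expect a genuine obstacle here — the proof is essentially this one symmetry of the Mittag–Leffler series for $h_q$ together with its $q$-periodicity.
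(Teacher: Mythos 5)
Your argument is correct, and it reaches the result by a genuinely different (and tidier) route than the paper. The paper proceeds by direct computation: writing $z=re^{i\theta}$ it derives the explicit formula \eqref{real h} for $\mathrm{Re}\,h_q(re^{i\theta})$ and then cancels the resulting series in pairs, matching the $k$-th term with the $(-k)$-th when $r=1$ and with the $(1-k)$-th when $r=q^{1/2}$. Your proof isolates the structural content of that cancellation as the inversion identity $h_q(1/z)=-h_q(z)$ (your reindexing $k\mapsto -k$ is exactly the paper's pairing, carried out once and for all at the level of the function), and then combines it with the reflection symmetry $\overline{h_q(z)}=h_q(\bar z)$, valid because the series has real coefficients, and with the $q$-periodicity \eqref{h alpha diff}: on $|z|=1$ one has $\bar z=1/z$, and on $|z|=q^{1/2}$ one has $\bar z=q/z$, so both circles are handled by the same identity. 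The absolute, locally uniform convergence you invoke (terms of size $O(q^{|k|})$) is the same estimate that legitimises the paper's term-by-term manipulations, so nothing further is needed. What your version buys: it treats the two radii uniformly, in fact yields $\mathrm{Re}\,h_q\equiv 0$ on every circle $|z|=q^{m/2}$, $m\in\mathbb{Z}$, by iterating \eqref{h alpha diff}, and it makes explicit that the points $z=\pm q^k$ lying on $|z|=1$ must be excluded as poles. What the paper's version buys: the explicit real-part formula \eqref{real h}, which is reused later in the appendix (in Lemma \ref{ray zeros} and Remark \ref{sym imag}), whereas your functional-equation argument does not by itself produce that formula.
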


\begin{proof}
Let $z = re^{i\theta}$. Substituting this into Equation \eqref{h new def} and determining the real part we find
\begin{equation}\label{real h}
\mathrm{Re}(h_q(re^{i\theta})) = \sum_{k=-\infty}^\infty \frac{2rq^k cos(\theta)(r^2-q^{2k})}{r^4+q^{4k} - 2r^2q^{2k}cos(2\theta)}. 
\end{equation}
First we consider the case $r=1$. The RHS of Equation \eqref{real h} becomes
\begin{eqnarray*}
\sum_{k=-\infty}^\infty \frac{2q^k cos(\theta)(1-q^{2k})}{1+q^{4k} - 2q^{2k}cos(2\theta)} &=&  \sum_{k=1}^\infty \frac{2q^k cos(\theta)(1-q^{2k})}{1+q^{4k}- 2q^{2k}cos(2\theta)}  \\
&& \, + \sum_{k=-\infty}^{-1} \frac{2q^k cos(\theta)(1-q^{2k})}{1+q^{4k} - 2q^{2k}cos(2\theta)}, \\
&=&  \sum_{k=1}^\infty \frac{2q^k cos(\theta)(1-q^{2k})}{1+q^{4k}- 2q^{2k}cos(2\theta)}  \\
&& \, + \sum_{k=1}^{\infty} \frac{2q^{-k} cos(\theta)(1-q^{-2k})}{1+q^{-4k} - 2q^{-2k}cos(2\theta)},\\
&=&0.
\end{eqnarray*}
Next we consider the case $r = q^{1/2}$. The RHS of Equation \eqref{real h} can be written as

\begin{eqnarray*}
\sum_{k=-\infty}^\infty \frac{2q^{k+1/2} cos(\theta)(q-q^{2k})}{q^2+q^{4k} - 2q^{2k+1}cos(2\theta)} &=&  \sum_{k=1}^\infty \frac{2q^{k+1/2} cos(\theta)(q-q^{2k})}{q^2+q^{4k} - 2q^{2k+1}cos(2\theta)}  \\
&& \, + \sum_{k=-\infty}^{0} \frac{2q^{k+1/2} cos(\theta)(q-q^{2k})}{q^2+q^{4k} - 2q^{2k+1}cos(2\theta)}, \\
&=&  \sum_{k=1}^\infty \frac{2q^{k+1/2} cos(\theta)(q-q^{2k})}{q^2+q^{4k} - 2q^{2k+1}cos(2\theta)}  \\
&& \, + \sum_{k=0}^{\infty} \frac{2q^{-k+1/2} cos(\theta)(q-q^{-2k})}{q^2+q^{-4k} - 2q^{-2k+1}cos(2\theta)}, \\
&=&  \sum_{k=1}^\infty \frac{2q^{k+1/2} cos(\theta)(q-q^{2k})}{q^2+q^{4k} - 2q^{2k+1}cos(2\theta)}  \\
&& \, + \sum_{j=1}^{\infty} \frac{2q^{j+1/2} cos(\theta)(q^{2j}-q)}{q^2+q^{4j} - 2q^{2j+1}cos(2\theta)}, \\
&=&0.
\end{eqnarray*}
\end{proof}

\begin{corollary}\label{corollary a.4}  
The function $h_q(z)$ defined in Equation \eqref{h new def} can also be written as
\begin{equation}\label{diffferent hq rep}
    h_q(z) = c_1\frac{z(qz^2,qz^{-2};q^2)_\infty}{(z^2,q^2z^{-2};q^2)_\infty},
\end{equation}
for some constant $c_1$.
\end{corollary}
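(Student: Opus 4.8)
The plan is to recognise the right-hand side of Equation \eqref{diffferent hq rep} as a constant multiple of $h_q(z)$ by invoking Corollary \ref{even coror}. Set
\[ \widetilde{h}(z) = \frac{z(qz^2,qz^{-2};q^2)_\infty}{(z^2,q^2z^{-2};q^2)_\infty}, \]
and observe that its denominator is exactly $g(z)$ from Equation \eqref{g def}, so by Lemma \ref{g diff lemma} it satisfies $g(qz) = -z^{-2}g(z)$. First I would check that the numerator $p(z) = z(qz^2,qz^{-2};q^2)_\infty$ obeys the \emph{same} functional equation $p(qz) = -z^{-2}p(z)$. This is a routine Pochhammer manipulation of the kind performed in Lemma \ref{g diff lemma}: substituting $qz$ turns $(qz^2;q^2)_\infty$ into $(q^3z^2;q^2)_\infty = (qz^2;q^2)_\infty/(1-qz^2)$ and $(qz^{-2};q^2)_\infty$ into $(q^{-1}z^{-2};q^2)_\infty = (1-q^{-1}z^{-2})(qz^{-2};q^2)_\infty$, and the prefactors combine to $q\cdot\frac{1-q^{-1}z^{-2}}{1-qz^2} = -z^{-2}$. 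Dividing, this gives $\widetilde{h}(qz) = \widetilde{h}(z)$.

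Next I would locate the poles and zeros of $\widetilde{h}$. The zeros of $g(z)$ occur exactly at $z = \pm q^k$, $k\in\mathbb{Z}$ (the factor $(z^2;q^2)_\infty$ contributes $z = \pm q^{-j}$ for $j\ge 0$, and $(q^2z^{-2};q^2)_\infty$ contributes $z = \pm q^{k}$ for $k\le 1$); since $0<q<1$ these points are pairwise distinct, so each zero is simple. The numerator $p(z)$ vanishes only at the half-integer points $z = \pm q^{k+1/2}$, $k\in\mathbb{Z}$, which are disjoint from the zeros of $g$; in particular $p$ is nonzero at $z=-1$. Hence $\widetilde{h}$ is holomorphic on $\mathbb{C}\setminus\{0\}$ except for simple poles at $\pm q^k$, $k\in\mathbb{Z}$. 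Corollary \ref{even coror} now applies and yields $\widetilde{h}(z) = c_1 h_q(z) + c_0$ for some constants $c_0, c_1$ with $c_1 \neq 0$.

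Finally I would eliminate $c_0$ by parity. The formula for $\widetilde{h}$ shows $\widetilde{h}(-z) = -\widetilde{h}(z)$, and Equation \eqref{h new def} shows $h_q(-z) = -h_q(z)$, so replacing $z$ by $-z$ in $\widetilde{h}(z) = c_1 h_q(z) + c_0$ forces $c_0 = -c_0$, i.e. $c_0 = 0$. Rearranging $\widetilde{h}(z) = c_1 h_q(z)$ into $h_q(z) = c_1^{-1}\widetilde{h}(z)$ and relabelling the constant gives Equation \eqref{diffferent hq rep}. I expect the only mildly delicate points to be the bookkeeping in the functional-equation verification for $p(z)$ and the confirmation that all poles of $\widetilde{h}$ are simple and correctly placed, both of which are elementary for $0<q<1$.
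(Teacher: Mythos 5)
Your proposal is correct and follows the paper's own strategy almost step for step: you identify the ratio as a function invariant under $z\mapsto qz$ with simple poles exactly at $\pm q^k$, $k\in\mathbb{Z}$, and then invoke Corollary \ref{even coror} to write it as $c_1h_q(z)+c_0$. The only real divergence is how you kill $c_0$: the paper compares zeros, noting that both sides vanish at $z=q^{1/2+k}$ (which relies on Lemma \ref{zero real lemma}), whereas you use the oddness of both $h_q$ and the Pochhammer ratio under $z\mapsto -z$; your parity argument is slightly more elementary and makes the corollary independent of Lemma \ref{zero real lemma}, at the cost of not recording where the zeros of $h_q$ lie (information the paper reuses elsewhere). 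One trivial slip: the factor $(q^2z^{-2};q^2)_\infty$ vanishes at $z=\pm q^{k}$ for $k\ge 1$, not $k\le 1$; your stated conclusion (zeros of $g$ at all $\pm q^k$, $k\in\mathbb{Z}$, each simple) is nonetheless correct.
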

\begin{proof}
We observe that the function
\begin{equation}\nonumber
    h(z) = \frac{z(qz^2,qz^{-2};q^2)_\infty}{(z^2,q^2z^{-2};q^2)_\infty},
\end{equation}
is meromorphic with simple poles at $\pm q^k$ for $k \in \mathbb{Z}$, and satisfies $h(qz)=h(z)$. By Corollary \ref{even coror} we conclude $h(z) = c_1h_q(z) + c_0$. By definition, $h(z)$ has zeros at $z = q^{1/2+k}$, for $k \in \mathbb{Z}$, and by Lemma \ref{zero real lemma}, $h_q(z)$ also has zeros at $z = q^{1/2+k}$, for $k \in \mathbb{Z}$. Thus, we conclude 
\begin{equation}\nonumber
    h_q(z) = c_1\frac{z(qz^2,qz^{-2};q^2)_\infty}{(z^2,q^2z^{-2};q^2)_\infty},
\end{equation}
for some constant $c_1$.
\end{proof}

\begin{lemma}\label{ray zeros}
Along the ray $z = re^{i\pi/4}$, $(r\in\mathbb{R}_{\geq 0})$ the real part of $h_q(z)$ is non-zero except at $r = q^{k/2}$, for $k\in \mathbb{Z}$ where $h_q(z)$ is complete imaginary.
\end{lemma}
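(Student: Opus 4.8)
The plan is to evaluate $\mathrm{Re}\bigl(h_q(re^{i\pi/4})\bigr)$ from \eqref{real h} and reduce the Lemma to a sign statement for a single periodic function. Putting $\theta=\pi/4$ in \eqref{real h} kills $\cos2\theta$ and replaces $\cos\theta$ by $1/\sqrt2$, giving
\[
  \mathrm{Re}\bigl(h_q(re^{i\pi/4})\bigr)=\sqrt2\,r\sum_{k\in\mathbb Z}\frac{q^{k}(r^{2}-q^{2k})}{r^{4}+q^{4k}},\qquad
  \mathrm{Im}\bigl(h_q(re^{i\pi/4})\bigr)=-\sqrt2\,r\sum_{k\in\mathbb Z}\frac{q^{k}(q^{2k}+r^{2})}{r^{4}+q^{4k}} .
\]
Every term of the second series is positive, so the imaginary part is strictly negative for all $r>0$; thus $h_q$ is never real on this ray, and it remains to show that the first series vanishes precisely at $r=q^{k/2}$, at which points the value is then forced to be a nonzero purely imaginary number.

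To pin the zeros down I would substitute $r^{2}=q^{s}$ (so $s=2\log_q r$ runs over $\mathbb R$, with $r=q^{k/2}$ corresponding to $s\in\mathbb Z$) and set $q=e^{-\epsilon}$, $\epsilon=-\ln q>0$. Factoring $q^{\pm k}$ out of the $k$-th summand and recognising $\sinh$ and $\cosh$ rewrites it as $\sinh\!\bigl(\epsilon(k-\tfrac s2)\bigr)\big/\cosh\!\bigl(\epsilon(2k-s)\bigr)$, and reindexing by $v=2k-s$ gives
\[
  \mathrm{Re}\bigl(h_q(re^{i\pi/4})\bigr)=\sqrt2\,\Psi(-s),\qquad
  \Psi(t):=\sum_{m\in\mathbb Z}\frac{\sinh\!\bigl(\tfrac\epsilon2(t+2m)\bigr)}{\cosh\!\bigl(\epsilon(t+2m)\bigr)} .
\]
The summands decay like $e^{-\epsilon|t|/2}$, so $\Psi$ is a continuous, odd, $2$-periodic function that visibly vanishes at every integer; hence the Lemma is equivalent to the assertion that $\Psi(t)\neq0$ for $t\in(0,1)$ (by oddness and periodicity the sign on every other unit interval follows).

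For that step I would pass to the Fourier side. The function $u\mapsto\sinh(\epsilon u/2)/\cosh(\epsilon u)$ has a Fourier transform of the same shape (a classical $\cosh$-quotient integral), so Poisson summation over the period-$2$ lattice turns $\Psi$ into a rapidly convergent sine series $\Psi(t)=\frac{\pi}{\sqrt2\,\epsilon}\sum_{n\ge1}g_n\sin(\pi nt)$ with $g_n=\sinh(\pi^{2}n/2\epsilon)/\cosh(\pi^{2}n/\epsilon)>0$; equivalently one obtains the closed form $\Psi(t)=\frac{\pi\sin(\pi t)}{2\sqrt2\,\epsilon}\sum_{l\ge0}\varepsilon_l\bigl(\cosh((2l+1)\delta)-\cos\pi t\bigr)^{-1}$ with $\delta=\pi^{2}/2\epsilon$ and $\varepsilon_l=\mathrm{Re}\bigl((1+i)\,i^{\,l}\bigr)$ of period four. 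The main obstacle, and the only genuinely non-routine point, is to show $\sum_{n\ge1}g_n\sin(\pi nt)>0$ for $t\in(0,1)$: the $g_n$ are positive but, for small $q$, not monotone (they increase and then decrease), so this is not a textbook alternating series. A workable route is to split on $q$: for $q$ bounded away from $0$ the sequence $g_n$ is convex and strictly decreasing, so classical Fej\'er--Jackson positivity for sine series with convex null coefficients applies directly; for $q$ near $0$ one returns to the Poisson representation and estimates its terms directly, the leading term being positive and dominating the rest. Granting the positivity, $\Psi>0$ on $(0,1)$, $\Psi<0$ on $(-1,0)$, its zeros are exactly the integers, and undoing $s=2\log_q r$ gives the Lemma; the purely imaginary value at $r=q^{k/2}$ is the vanishing of the real part together with the strict negativity of the imaginary part from the first step.
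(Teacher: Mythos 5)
Your reduction is correct as far as it goes, and it is a genuinely different route from the paper's. I checked the $\theta=\pi/4$ evaluation of \eqref{real h}, the hyperbolic rewriting, the Poisson summation and the closed form: up to harmless constant factors they are right, and your observation that $\mathrm{Im}\,h_q(re^{i\pi/4})$ is a sum of strictly negative terms is a clean way to get the ``purely imaginary, nonzero'' part. The paper instead avoids any positivity question: it complexifies the sum in \eqref{real h} to $F(u)$ with $F(qu)=F(u)$, multiplies by $(-u^4,q^4u^{-4};q^4)_\infty$ to obtain an odd function analytic in $\mathbb{C}\setminus\{0\}$ satisfying $g(qu)=u^{-4}g(u)$, reads off from the Laurent coefficients that the odd solution space of this $q$-difference equation is two-dimensional, spanned by explicit products whose zero sets are $\pm q^{k/2}$ and $\pm iq^{k/2}$, and then uses Lemma \ref{zero real lemma} to kill the second component; no Fourier analysis and no case split in $q$ is needed.

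The genuine gap is exactly the step you flag and then assume: strict positivity of $\sum_{n\ge1}g_n\sin(\pi nt)$ on $(0,1)$, with $g_n=\sinh(n\delta)/\cosh(2n\delta)$, $\delta=\pi^2/(2\epsilon)$, is never proved, and one of your two sketched cases would fail as stated. For $q$ bounded away from $0$ you must actually verify monotonicity and convexity of $(g_n)_{n\ge1}$: since $x\mapsto\sinh(x\delta)/\cosh(2x\delta)$ increases until $x\delta\approx0.66$ and is concave near its maximum, this holds only for $\delta$ above an explicit threshold, so the threshold and the matching with the other regime must be made quantitative (and Fej\'er-type theorems give nonnegativity, so strictness needs an extra word, e.g. $\Delta^2 g_1>0$ together with $\tilde K_1(x)=\sin x>0$). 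More seriously, for $q$ near $0$ the claim that ``the leading term is positive and dominates the rest'' cannot hold uniformly on $(0,1)$: as $t\to1^-$ the whole series tends to $0$ (it vanishes at $t=1$) because the $m=0$ and $m=-1$ terms nearly cancel, so no single term dominates the absolute sum of the others. A cancellation-aware device is needed, for instance pairing $m=j\ge0$ with $m=-(j+1)$, writing each bracket as $\phi(t+2j)-\phi(2j+2-t)$ with $\phi(u)=\sinh(\epsilon u/2)/\cosh(\epsilon u)$ and using that $\phi$ is strictly decreasing beyond its maximum $u_0(\epsilon)$, which settles $t\in(u_0,1)$ and leaves a separate estimate for small $t$. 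Until this positivity lemma is carried out in full (or replaced by an argument like the paper's $q$-difference/product factorisation), the proof of Lemma \ref{ray zeros} is incomplete.
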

\begin{proof}
From Equation \eqref{real h} we determine that the real part of $h_q(re^{i\pi/4})$ is given by
\[ \mathrm{Re}(h_q(re^{i\pi/4})) = \sum_{k=-\infty}^\infty \frac{\sqrt{2}rq^k(r^2-q^{2k})}{r^4+q^{4k}}. \]
Define the function
\[ F(u) = \sum_{k=-\infty}^\infty \frac{uq^k(u^2-q^{2k})}{u^4+q^{4k}}, \]
where $u$ is a complex variable. We note that the sum is well defined and converges for all $u \neq q^{k}e^{(i\pi + 2n\pi)/4}$, where $k\in\mathbb{Z}$ and $n = 0,1,2,3$. Furthermore, $F(u)$ satisfies the $q$-difference equation $F(qu) = F(u)$. We now multiply $F(u)$ by a function with zeros at the poles of $F(u)$ to give us a function analytic in $\mathbb{C}\setminus \{0\}$. Define
\[ g(u) = F(u)(-u^4,q^4u^{-4};q^4)_\infty.\]
Note that $(-u^4,q^4u^{-4};q^4)_\infty$ is an even function and satisfies $(-(qu)^4,q^4(qu)^{-4};q^4)_\infty = u^{-4}(-u^4,q^4u^{-4};q^4)_\infty$. Thus, $g(u)$ is analytic in $\mathbb{C}\setminus \{0\}$, $g(u)$ satisfies the $q$-difference equation $g(qu) = u^{-4}g(u)$, and $g(u)$ is an odd function. Let us represent $g(u)$ with the convergent Laurent series
\[ g(u) = \sum_{j=-\infty}^\infty g_ju^j .\]
As $g(u)$ satisfies the difference equation $g(qu) = u^{-4}g(u)$ we find
\[ g_{j+4} = q^jg_j. \]
Hence, we conclude that there are four linearly independent solutions, which can be chosen such that two are odd and two are even. As $g(u)$ is odd we conclude it is the sum of two linearly independent odd solutions. Consider the two functions
\[ G_1(u) = u(u^2,q^2u^{-2};q^2)_\infty (qu^2,qu^{-2};q^2)_\infty,\]
\[ G_2(u) = u(-u^2,-q^2u^{-2};q^2)_\infty (-qu^2,-qu^{-2};q^2)_\infty.\]
We note that $G_1(u)$ has zeros at $u = \pm q^{k/2}$, for $k \in \mathbb{Z}$ and $G_2(u)$ has zeros at $u = \pm iq^{k/2}$, for $k \in \mathbb{Z}$. Furthermore, both $G_1(u)$ and $G_2(u)$ are odd and satisfy the difference equation $G(qu) = u^{-4}G(u)$. Thus, 
\[ g(u) = c_1G_1(u) + c_2G_2(u), \]
for some constants $c_1$ and $c_2$. By Lemma \ref{zero real lemma} we conclude that $c_2=0$ and $g(u)$ only has zeros at the zeros of $G_1(u)$ which occur at $u = \pm q^{k/2}$, for $k \in \mathbb{Z}$. Hence, this is where the zeros of $F(u)$ are and Lemma \ref{ray zeros} follows immediately.
\end{proof}

\begin{remark}\label{sym imag}
From Equation \eqref{real h} we deduce that
\[ \mathrm{Re}(h_q(re^{i\pi/4})) = -\mathrm{Re}(h_q(re^{3i\pi/4})) = -\mathrm{Re}(h_q(re^{-3i\pi/4})) = \mathrm{Re}(h_q(re^{-i\pi/4})).\]
Using an analogous expression to Equation \eqref{real h}, for the imaginary part of $h_q(z)$, one can also show by direct calculation that
\[ \mathrm{Im}(h_q(re^{i\pi/4})) = \mathrm{Im}(h_q(re^{3i\pi/4})) = -\mathrm{Im}(h_q(re^{-3i\pi/4})) = -\mathrm{Im}(h_q(re^{-i\pi/4})) \neq 0.\]
\end{remark}

\section{RHP theory}\label{R to I section}
For completeness we recall a well known result from RHP theory and prove it below. Let $R(z)$ be a solution of the following RHP:
\begin{definition}\label{example RHP}
Let $\Gamma$ be an appropriate curve (see Definition \ref{admissable}) with interior $\mathcal D_-$ and exterior $\mathcal D_+$. A $2\times 2$ complex matrix function $R(z)$, $z\in\mathbb C$, is a solution of the RHP (\ref{example RHP}) if it satisfies the following conditions:
\begin{enumerate}[label={{\rm (\roman *)}}]
\begin{subequations}
\item $R(z)$ is analytic in $\mathbb{C}\setminus \Gamma$.
\item $R(z)$ has continuous boundary values $R^-(s)$ and $R^+(s)$ as $z$ approaches $s\in\Gamma$ from $\mathcal D_-$ and $\mathcal D_+$ respectively, where 
\begin{gather} \label{1.6a}
R^+(s)
=
R^-(s)J(s), \; s\in\Gamma,
\end{gather}
for a $2\times2$ matrix $J(s)$.

\item $R(z)$ satisfies
\begin{gather} \label{1.6c}
R(z) = 
\begin{bmatrix}
1 &
0  \\
0&
1
\end{bmatrix} + O\left(\frac{1}{z}\right),\; as\, |z| \to \infty.
\end{gather}

\end{subequations} 
\end{enumerate} 
\end{definition}

\begin{theorem}\label{R to 1 theorem}
Suppose that $J(s)$ can be analytically extended to a neighbourhood of $\Gamma$. Furthermore, for a given $0<\epsilon<1$ suppose that 
\begin{equation} \nonumber
||J(s) - I|| < \epsilon,
\end{equation}  
in this neighbourhood (where $||.||$ is the matrix norm). Then, the solution of the RHP given by Definition \ref{example RHP} satisfies
\begin{equation} \nonumber
||R(z) - I|| < O(\epsilon),
\end{equation}
for all $z\in \mathbb{C}$.
\end{theorem}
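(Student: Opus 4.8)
The plan is to reduce the statement to a small-norm singular integral equation and solve it by a Neumann series. First I would recast the RHP as an additive (rather than multiplicative) jump problem: writing $R = I + \Phi$, the jump condition $R^+ = R^- J$ becomes $\Phi^+ - \Phi^- = \Phi^-(J-I) + (J-I)$ on $\Gamma$, with $\Phi(z) = O(1/z)$ at infinity. By the Plemelj--Sokhotski formula, any such $\Phi$ admits the representation $\Phi(z) = \frac{1}{2\pi i}\int_\Gamma \frac{\rho(s)}{s-z}\,ds$ for a density $\rho$ on $\Gamma$ satisfying the singular integral equation $\rho = \mathcal{C}_-[\rho\,(J-I)] + (J-I)$, where $\mathcal{C}_-$ is the boundary Cauchy operator from the minus side. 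Since $\Gamma$ is a smooth Jordan curve, $\mathcal{C}_-$ is bounded on $L^2(\Gamma)$ (and on $L^\infty$, or one can work in $L^2\cap L^\infty$) with a norm $\|\mathcal{C}_-\|$ depending only on $\Gamma$, not on $J$.

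Next I would invoke the hypothesis $\|J-I\| < \epsilon$: the operator $T_\rho := \mathcal{C}_-[\rho\,(J-I)]$ has operator norm at most $\|\mathcal{C}_-\|\,\epsilon$, which for $\epsilon$ small enough (say $\epsilon < 1/(2\|\mathcal{C}_-\|)$) is a contraction, so $(\mathrm{id} - T)^{-1}$ exists with norm $\le (1 - \|\mathcal{C}_-\|\epsilon)^{-1} \le 2$. Hence $\rho = (\mathrm{id}-T)^{-1}(J-I)$ exists, is unique, and satisfies $\|\rho\| \le 2\epsilon = O(\epsilon)$. Feeding this back into the Cauchy integral representation, for $z$ bounded away from $\Gamma$ one estimates $|\Phi(z)| = \bigl|\frac{1}{2\pi i}\int_\Gamma \frac{\rho(s)}{s-z}\,ds\bigr| \le C\,\|\rho\| = O(\epsilon)$ directly, and for $z$ approaching $\Gamma$ one uses the boundary values $\Phi^\pm = \mathcal{C}_\pm[\rho]$ together with $\|\mathcal{C}_\pm\rho\| \le \|\mathcal{C}_\pm\|\,O(\epsilon) = O(\epsilon)$; the analytic extendability of $J$ to a neighbourhood of $\Gamma$ lets one deform the contour slightly so that even points on $\Gamma$ are handled uniformly. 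This yields $\|R(z) - I\| = O(\epsilon)$ for all $z \in \mathbb{C}$.

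Finally I would address uniqueness and the global (all $z$) nature of the bound. Uniqueness of $R$ follows from the usual Liouville argument: the ratio of two solutions has trivial jump (determinant considerations show the solutions are invertible), hence is entire and tends to $I$, so equals $I$; consequently the solution built from the Neumann series is \emph{the} solution. The bound for large $|z|$ is immediate from $\Phi(z) = O(1/z)$ with the implied constant controlled by $\|\rho\|$.

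The main obstacle I anticipate is not the contraction-mapping step, which is routine, but two bookkeeping points: first, making the Cauchy-operator bounds genuinely uniform near the contour — this is why the hypothesis that $J$ extends analytically past $\Gamma$ is used, allowing a contour deformation so that the $O(\epsilon)$ estimate holds on $\Gamma$ itself and not merely off it; and second, confirming that the function constructed from $\rho$ really does solve the original RHP (it clearly has the right jump and decay; one checks analyticity in $\mathbb{C}\setminus\Gamma$ from the Cauchy-transform structure). Both are standard in the Deift--Zhou framework, so the write-up can be brief, citing \cite{Deift1999strong} for the functional-analytic inputs.
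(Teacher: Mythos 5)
Your proposal is correct in substance, but it takes a genuinely different route from the paper. You set up the standard small-norm machinery: represent $R=I+\Phi$ as a Cauchy transform of a density on $\Gamma$, derive a singular integral equation, invert by a Neumann series using the $L^2(\Gamma)$-boundedness of the Cauchy projection $\mathcal{C}_-$, and then pass to pointwise bounds, deforming the contour (via the analytic extension of $J$) to get uniformity near $\Gamma$; this also \emph{constructs} the solution, with uniqueness supplied by a Liouville argument. The paper instead gives a short a priori estimate on the solution it already has in hand (in the application $R=W_nG^{-1}$ exists by construction): from Plemelj and the decay condition it writes $R(z)=I+\frac{1}{2\pi i}\oint_\Gamma\frac{R^-(s)(J(s)-I)}{z-s}\,ds$, bounds $\sup_{\mathcal D_-}|R|$ by the maximum principle (the sup is attained on $\Gamma$), and uses the analytic extension of the jump to move the contour a fixed distance $r$ away so the Cauchy kernel is bounded by $1/r$; the only Neumann series is the pointwise matrix inverse of $I+(J-I)$, and no operator-norm bound for $\mathcal{C}_\pm$ is ever invoked. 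Your route buys existence and fits the general Deift--Zhou framework, at the price of a smallness threshold $\epsilon<1/(2\|\mathcal{C}_-\|)$ (harmless for an $O(\epsilon)$ conclusion, though the theorem is phrased for any $0<\epsilon<1$) and of the functional-analytic input; the paper's route is more elementary but presupposes the solution. Two bookkeeping corrections to your write-up: with the representation $\Phi=C[\rho]$ the integral equation is $\rho=(J-I)+\mathcal{C}_-[\rho]\,(J-I)$, i.e.\ the multiplication by $(J-I)$ sits outside the Cauchy operator (your form $\mathcal{C}_-[\rho(J-I)]$ corresponds to the Beals--Coifman unknown $\mu=I+\mathcal{C}_-[\mu(J-I)]$ with density $\mu(J-I)$ --- either convention works, but they should not be mixed); and $\mathcal{C}_-$ is not bounded on $L^\infty(\Gamma)$, so the $L^2$ framework together with the contour deformation you already invoke is the correct way to reach the sup-norm bound on and near $\Gamma$.
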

\begin{proof}
Multiple sources give a proof of Theorem \ref{R to 1 theorem} with various conditions on the jump matrix $J$ \cites{Deift1999strong}{kuijlaars2003riemann}. For our setting the jump matrix is quite well behaved and satisfies all these constraints. We include a brief proof for completeness. Let 
\[ \Delta(s) = R(s) - I ,\]
substituting $\Delta(s)$ into Equation \eqref{1.6a} gives
\begin{equation*}
    R^+(s) = R^-(s)(I+\epsilon\Delta(s)).
\end{equation*}
By the asymptotic condition, Equation \eqref{1.6c}, we conclude that
\begin{equation}\label{R outside}
    R(z) = I +\frac{\epsilon}{2\pi i}\oint_{\Gamma}\frac{R_-(s)\Delta(s)}{z-s}ds. 
\end{equation}
Let $L$ be defined as $L = \sup_{z\in\mathcal{D}_-}(|R(z)|)$, and let the maximum be at $z_L$. As $R(z)$ is analytic in $\mathcal{D}_-$ it follows $|R(z)|$ achieves its maximum on the boundary (i.e. on $\Gamma$). By assumption $R(z)$ and $\Delta(z)$ are also analytic for some fixed distance $r$ from $\Gamma$, let us call this curve $\Gamma_i$. Therefore,
\begin{equation}\label{rxl}
    R(z_L) = \left(I + \frac{\epsilon}{2\pi i}\oint_{\Gamma_i}\frac{R_-(s)\Delta(s)}{z-s}ds\right)\left(I+\epsilon\Delta(s)\right)^{-1},
\end{equation}
where $\left(I+\epsilon\Delta(s)\right)^{-1}$ can be determined using the Nuemann series 
\begin{equation*}
    \left(I+\epsilon\Delta(s)\right)^{-1} = \sum_{j=0}^{\infty}(-\epsilon\Delta(s))^{j}.
\end{equation*}
We conclude from Equation \eqref{rxl}
\begin{equation}
    L < \left| I + \frac{\epsilon \Vert \Delta \Vert_{\Gamma_i} \mathrm{len}(\Gamma_i)}{2\pi r} \right| \left|\sum_{j=0}^{\infty}(-\epsilon \Vert \Delta \Vert_{\Gamma})^{j}\right|,
\end{equation}
and hence $|L-1| = O(\epsilon)$. Thus we find that,
\begin{equation}
    |R(z)-I| < c_\Gamma O(\epsilon)
\end{equation}
For some constant 
\[ c_\Gamma = \frac{\Vert \Delta \Vert_{\Gamma_i} \mathrm{len}(\Gamma_i)}{2\pi r} + \Vert \Delta \Vert_{\Gamma} ,\]
which is independent of $\epsilon$.
\end{proof}

\section*{Funding}
Nalini Joshi's research was supported by an Australian Research Council Discovery Projects \#DP200100210 and \#DP210100129. Tomas Lasic Latimer's research was supported the Australian Government Research Training Program and by the University of Sydney Postgraduate Research Supplementary Scholarship in Integrable Systems.

\printbibliography

\end{document}